\newtheorem{thm}{Theorem}[section]
\newtheorem{cor}[thm]{Corollary}
\newtheorem{lem}[thm]{Lemma}
\newtheorem{prop}[thm]{Proposition}
\newtheorem{question}[thm]{Question}                                     
\theoremstyle{definition}
\newtheorem{defin}[thm]{Definition}
\theoremstyle{remark}
\newtheorem{remark}[thm]{Remark}
\newtheorem{remarks}[thm]{Remarks}
\newtheorem{examples}[thm]{Examples}
\numberwithin{equation}{section}
\newcommand{\delete}[1]{} 
\newcommand{\nt}{\noindent}
\def\om{\omega}
\def\a{\alpha}
\newcommand{\Ga}{\Gamma}
\newcommand{\del}{\delta}
\newcommand{\la}{\lambda}
\newcommand{\La}{\Lambda}
\newcommand{\sk}{\vskip 0.2cm}
\newcommand{\nl}{\newline}
\newcommand{\ben}{\begin{enumerate}}
\newcommand{\een}{\end{enumerate}}
\newcommand{\bit}{\begin{itemize}}
\newcommand{\eit}{\end{itemize}}
\def\R {{\mathbb R}}
\def\N {{\mathbb N}}
\def\Z {{\mathbb Z}}
\def\F{{\mathcal F}}
\def\Aut{{\mathrm Aut}\,}
\def\Aut{{\rm{Aut}}}
\newcommand{\lan}{\langle}
\newcommand{\ran}{\rangle}
\newcommand{\nor}{\vartriangleleft}
\newcommand{\rest}{\upharpoonright}
\newcommand{\br}{\vspace{3 mm}}
\newcommand{\ch}{\mathbf{1}}
\newcommand{\bA}{\mathbf{A}}
\newcommand{\bM}{\mathbf{M}}
\def\QED{\nobreak\quad\ifmmode\roman{Q.E.D.}\else{\rm Q.E.D.}\fi}
\begin{document}

\title[]{Group actions on treelike compact spaces}

\author[]{Eli Glasner}
\address{Department of Mathematics,
	Tel-Aviv University, Ramat Aviv, Israel}
\email{glasner@math.tau.ac.il}
\urladdr{http://www.math.tau.ac.il/$^\sim$glasner}

\author[]{Michael Megrelishvili}
\address{Department of Mathematics,
	Bar-Ilan University, 52900 Ramat-Gan, Israel}
\email{megereli@math.biu.ac.il}
\urladdr{http://www.math.biu.ac.il/$^\sim$megereli}

\date{May 27, 2019}

\thanks{This research was supported by a grant of the Israel Science Foundation (ISF 668/13)}

\keywords{amenable group, dendrite, dendron, ends, fragmentability, median pretree, null system, proximal action, Rosenthal Banach space, shadow topology, treelike structures, tame dynamical system}

\subjclass[2010]{Primary 37Bxx; Secondary 54H20; 54H15; 22A25}

\begin{abstract}  
	We show that group actions on many treelike compact spaces are not too complicated dynamically. 
We first observe that an old argument of Seidler \cite{Seid-90}, implies that every action of a topological group $G$ 
on a regular continuum is null and therefore also tame. As every local dendron is regular, one concludes that every action of $G$ on a local dendron is null. We then use a more direct method to show that every continuous group action of $G$ on a dendron is Rosenthal representable, hence also tame. Similar results are obtained for median pretrees. 
As a related result we show that Helly's selection principle can be extended to bounded monotone sequences 
defined on median pretrees (e.g., dendrons or linearly ordered sets).  
 Finally, we point out some applications of these results to continuous group  actions on dendrites. 
\end{abstract}

\maketitle

\setcounter{tocdepth}{1}
 \tableofcontents

\section{Introduction and preliminaries} 
 

One of the 
motivations for writing the present work was to show 
that group actions on dendrons are Rosenthal representable, hence tame (see Definition \ref{d:repr} below).  
Representations on Banach spaces with ``good" geometry lead to a 
natural hierarchy in the world of continuous actions $G \curvearrowright X$ 
of topological groups $G$ on topological spaces $X$.  
In particular, representations on Banach spaces without a copy of $l_1$ (we call them {\it Rosenthal} Banach spaces) play a very important role in this hierarchy. 
According to the Rosenthal $l_1$-dichotomy \cite{Ros0}, 
and the corresponding Dynamical Bourgin-Fremlin-Talagrand dichotomy \cite{GM-HNS,GM-survey},
there is a sharp dichotomy for 
metrizable 
dynamical systems;
either their enveloping semigroup is of cardinality smaller or equal to that of the continuum, or it
is very large and contains a copy of the Stone-\v{C}ech compactification of $\N$ (the set of natural numbers). 

When $X$ is compact 
metrizable, 
in the first case, such a dynamical system $(G,X)$ is called {\it tame}. 
By A. K\"{o}hler's \cite{Koh} definition, tameness  
 means that for every continuous real valued function 
$f: X \to \R$ the family of functions $fG:=\{fg\}_{g \in G}$ is ``combinatorially small"; 
namely, $fG$ does not contain an {\it independent sequence}.

Recall the classical definition from \cite{Ros0}: a sequence $f_n$ of real valued functions on a set $X$ 
is said to be \emph{independent} if
there exist real numbers $a < b$ such that
$$
\bigcap_{n \in P} f_n^{-1}(-\infty,a) \cap  \bigcap_{n \in M} f_n^{-1}(b,\infty) \neq \emptyset
$$
for all finite disjoint subsets $P, M$ of $\N$.
As in \cite{GM-tame,GM-tLN} we say that a bounded family of real valued functions is a 
{\it tame family} if it does not contain an independent sequence.   
 For 
 compact 
 metrizable systems $X$ the two concepts discussed above coincide: 
 $(G,X)$ is tame iff it is Rosenthal representable.  
 The case of metrizable $X$ admits 
 also interesting enveloping semigroup characterization: $(G,X)$ is tame 
 iff every element
  $p \in E(G,X)$ is a Baire class 1 function $p: X \to X$ (see  \cite{GMU,GM-survey}). 
 Recall that the {\it enveloping semigroup} $E(G,X)$ of a compact $G$-system $X$ is the pointwise closure of the set of all $g$-translations $X \to X$ ($g \in G$) in $X^X$. 

We next recall the definition of a Banach representation of a dynamical system. 
For a Banach space $V$ denote by $Is(V)$ the topological group (equipped with the strong operator topology) of all linear isometries $V \to V$.  The group $Is(V)$ acts on the dual space $V^*$ and its weak-star compact subsets. 

 \begin{defin} \label{d:repr} (See \cite{Me-nz,GM-HNS,GM-survey}) 
	A \emph{representation} of an action  $G \curvearrowright X$ on a Banach space $V$ is a pair $(h,\a)$, where 
	$h: G \to Is(V)$ is a strongly continuous co-homomorphism 
	and $\a: X \to V^*$ is a weak-star continuous bounded map 
	into the dual of $V$ such that $(h,\a)$ respect the  original action and the induced dual action. That is, 
	$$
	\lan v,\a(gx) \ran = \lan h(g)(v),\a(x) \ran   \ \ \ \text{for all} \  v \in V, g \in G, x \in X.  
	$$ 
\end{defin}

We say that $(G,X)$ is {\it Rosenthal representable}, or WRN (Weakly Radon-Nikodym),  if there exist: a Rosenthal Banach space $V$ and a representation $(h,\a)$ as above such that $\a$ is a topological embedding. 
 For compact $G$-systems $X$ this concept was defined in \cite{GM-rose}.  
  The class of all Rosenthal representable dynamical systems and its subclass of tame systems are quite large. 
  When considering the trivial action of $G$ on a space $X$, this definition reduces to the purely topological notion
  of a WRN space.
  
  For motivation, properties and examples see, for example,  
  \cite{GM-rose,GM-survey,GM-tLN} and the references thereof.  
A relevant recent result is that, for
 every circularly  (in particular, linearly) ordered compact space $K$, the action $H_+(K) \curvearrowright K$, of the topological group $H_+(K)$ of all order-preserving homeomorphisms of $K$ on $K$, 
 is Rosenthal representable, \cite{GM-c}. 
 
 
\br

 Recall that a \textit{continuum} is a compact Hausdorff connected space. 
 A continuum $D$ is said to be a \textit{dendron} \cite{Mill-Wattel} if every pair of distinct points 
 $u,v$ can be separated in $D$ by a third point $w$.
A metrizable dendron is called a \textit{dendrite}. 
The class of dendrons is an important class of 1-dimensional 
treelike
compact spaces. 
A  compact space is said to be a \textit{local dendron} if each of its points admits 
a closed neighborhood which is a dendron. Similarly, a compact space is called a 
\textit{local dendrite} 
if each of its points admits a closed neighborhood which is a dendrite. Clearly, the circle is a local dendrite.

In a dendron $X$, for every pair of points $u,v$ in $X$  one defines a ``generalized arc" 
	$$
	[u,v]=\{x \in X: \ x \ \text{separates $u$ from $v$}\} \cup \{u,v\}. 
	$$
In dendrites the 
generalized arc $[u,v]$ is a {\em real arc}, 
i.e. a topological copy of an interval in the real line. 
This is not necessarily the case for dendrons. Indeed, note that any connected linearly ordered compact space, 
in its interval topology,  is an example of a dendron.

	A topological space $X$ is called {\em regular} if every point has a local base for its topology,
	each member of which has finite boundary. 
	For compact Hausdorff spaces this is equivalent to saying that each open cover admits a finer (finite) open cover each member of which has finite  boundary (see e.g. \cite{Kur}). 
	Some works refer to such compact spaces as being  {\em rim-finite}  (see e.g. \cite{Ward}). 
Every (local) dendron is regular \cite[Theorem 21]{Ward}. 	
For more information on dendrons and dendrites
see, for example, \cite{Charatoniks, Mill-Wattel,DM-16}. 
 
 \sk 

Our goal in the first part of this paper is to prove the following two results 
(see Theorems \ref{t:nullness} and \ref{t:Dendron} for the proofs).

 \begin{thm} \label{t:nullnessINTRO} 
 	Any action of a group $G$ on a regular 
 	continuum is null, hence also tame.
 \end{thm}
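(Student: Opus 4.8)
The plan is to establish that $(G,X)$ is \emph{null} in the sense that its topological sequence entropy vanishes along every sequence $\mathbf{s}=(s_n)$ in $G$; the implication ``null $\Rightarrow$ tame'' is then automatic, since an infinite independent sequence of translates $fg_n$ would, along its own enumeration, produce a density-one combinatorially independent set and hence positive sequence entropy. To bound sequence entropy it suffices to control, for every finite open cover $\mathcal U$ and every sequence $\mathbf s$, the growth of $N(\bigvee_{i=1}^n s_i^{-1}\mathcal U)$, the minimal cardinality of a subcover of the common refinement. Here regularity is used at once: as recalled in the excerpt, in a compact regular space every open cover is refined by a finite open cover all of whose members have finite boundary, so I may assume $\mathcal U=\{W_1,\dots,W_r\}$ with each $\partial W_j$ finite.

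Fix such a $\mathcal U$ and a sequence $\mathbf s$. First I would set up the bookkeeping that converts the covering number into a count of connected components. Put $S_n=\bigcup_{i\le n}\bigcup_{j\le r} s_i^{-1}\partial W_j$, a finite set. For each $i,j$ the translate $s_i^{-1}W_j$ has boundary $s_i^{-1}\partial W_j\subseteq S_n$, so it is relatively clopen in $X\setminus S_n$; consequently every connected component $C$ of $X\setminus S_n$ is either contained in $s_i^{-1}W_j$ or disjoint from it. Hence any two points lying in the same component of $X\setminus S_n$ belong to exactly the same members of the refinement $\bigvee_{i\le n}s_i^{-1}\mathcal U$, so the number of nonempty cells of this refinement, and a fortiori $N(\bigvee_{i\le n}s_i^{-1}\mathcal U)$, is at most the number of components of $X\setminus S_n$ plus the at most $|S_n|$ patterns contributed by the points of $S_n$ themselves.

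The crux is therefore a purely topological estimate on the number of components of $X\setminus S_n$, and this is exactly where Seidler's argument enters. Writing $m=\sum_{j}|\partial W_j|$, we have $|S_n|\le mn$; moreover, because each $s_i$ is a homeomorphism, every point of $S_n$ is an orbit translate of a point of the fixed finite set $\bigcup_j\partial W_j$ and hence has the same finite local separation order as its original, so all of the $\le mn$ removed points have order bounded by a constant $M$ depending only on $\mathcal U$. The topological fact I would invoke is that in a regular continuum deleting $N$ points each of order $\le M$ leaves at most polynomially many (in $N$) components, the rim-finiteness forbidding the infinite local branching (as in a harmonic fan) that would otherwise let a single deletion create infinitely many pieces. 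Granting a polynomial bound $p(n)$ on the number of components of $X\setminus S_n$, we obtain $N(\bigvee_{i\le n}s_i^{-1}\mathcal U)\le p(n)+mn$, whence $h_{\mathbf s}(\mathcal U)=\limsup_n \tfrac1n\log\big(p(n)+mn\big)=0$. As $\mathcal U$ and $\mathbf s$ were arbitrary and finite-boundary covers are cofinal, $(G,X)$ is null.

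I expect the main obstacle to be precisely this last estimate: controlling the number of components of $X\setminus S_n$ for a regular continuum, uniformly along the orbit. The tree-like case is transparent, since each deleted point of order $M$ contributes at most $M-1$ new components and one gets a linear bound; but a general regular continuum may carry nontrivial cyclic structure, and I would need to check that such contributions remain sub-exponential in $|S_n|$ — a sub-exponential bound being all that the entropy computation actually needs. The remaining ingredients, namely the cofinality of finite-boundary covers, the clopen/cells bookkeeping, and the invariance of the local order under the homeomorphisms $s_i$, are routine once this estimate is in hand.
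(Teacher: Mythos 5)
There is a genuine gap, and it sits exactly where you yourself flagged the risk: the ``topological fact'' you invoke --- that in a regular continuum deleting $N$ points, each of order at most $M$, leaves at most polynomially many components --- fails on two counts. First, regularity (rim-finiteness) gives each point arbitrarily small neighborhoods with finite boundary but does \emph{not} bound the Menger--Urysohn order of a point: in the Wa\.{z}ewski dendrite $D_\infty$, which is a dendrite and hence a regular continuum by \cite[Theorem 21]{Ward}, every branch point has order $\omega$, so no constant $M$ depending only on $\mathcal{U}$ exists, and the invariance of order under the homeomorphisms $s_i$ does not help. Second, and fatally for your bookkeeping, removing a \emph{single} such branch point already disconnects $D_\infty$ into infinitely many components; thus your estimate bounding $N(\bigvee_{i\le n} s_i^{-1}\mathcal{U})$ by the number of components of $X\setminus S_n$ plus $|S_n|$ is vacuous, its right-hand side being infinite while the left-hand side is finite by compactness. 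The loss is structural: many components carry the same cell pattern of the join, so what must be counted is patterns (equivalently, the size of a minimal subcover), not components, and nothing in your argument controls that count; there is no sub-exponential component bound to ``check,'' since the count is simply infinite.

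The paper closes precisely this gap with Seidler's connectedness lemma (Lemma \ref{l-1}), which avoids components altogether: if $\bM$ is a cover of $X$ with no proper subcover, then for each $M_j\in\bM$ the open set $V=\bigcup_{i\neq j}M_i$ satisfies $X\setminus V\neq\emptyset$, hence $\partial(V)\neq\emptyset$ by connectedness of the continuum, and $\partial(V)$ lies in $M_j$, in the boundary-point set $P$, and in no other member of $\bM$; this injects the members of $\bM$ into disjoint nonempty subsets of $P$, giving $|\bM|\le |P|$. Applied to a minimum-cardinality subcover $\bM_n$ of $\bigvee_{i=0}^{n-1}s_i(\bA)$, whose members have boundaries inside the at most $nL_{\bA}$ boundary points of the translates $s_i(\bA)$ (Lemma \ref{l-2}), this yields $N(\bigvee_{i=0}^{n-1}s_i(\bA))\le nL_{\bA}$, i.e.\ linear growth and hence zero sequence entropy along every sequence. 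Your peripheral steps --- cofinality of finite-boundary covers, the relatively clopen cell bookkeeping on $X\setminus S_n$, and quoting Kerr--Li \cite{KL} for null $\Rightarrow$ tame (your one-line sketch of that implication is too loose to stand on its own, but the citation suffices) --- are all fine; it is the central component-counting estimate that must be replaced by the subcover-versus-boundary-points argument.
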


\begin{thm} \label{thm-tame}
Let $D$ be a dendron. For every topological group $G$ and continuous action 
$G \curvearrowright D$, the dynamical $G$-system $D$ is Rosenthal representable, hence also tame. 
\end{thm}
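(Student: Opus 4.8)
The plan is to reduce the statement to a single combinatorial property of a well-chosen family of functions and then invoke the interpolation machinery for Rosenthal representations. Note first that tameness alone is already granted by Theorem \ref{t:nullnessINTRO}, since a dendron is a regular continuum; the real content here is the stronger conclusion of Rosenthal representability, so I must actually produce a Rosenthal space $V$, a strongly continuous isometric co-representation $h\colon G\to \mathrm{Is}(V)$, and a weak-$*$ continuous embedding $\alpha\colon D\to V^*$ intertwining the actions. The reduction I would use is: it suffices to construct a bounded, symmetric (closed under $f\mapsto 1-f$) and $G$-invariant family $\mathcal F\subseteq C(D)$ that separates the points of $D$ and contains no independent sequence. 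Indeed, by Rosenthal's $\ell_1$-dichotomy the absence of an independent sequence is exactly weak precompactness, so a Davis--Figiel--Johnson--Pe\l czy\'nski (DFJP) type interpolation applied to $\mathcal F$ (more precisely to its closed symmetric convex hull) inside $C(D)$, as in \cite{GM-rose,GM-tame}, yields a Rosenthal space $V$ together with a bounded operator $j\colon V\to C(D)$ with $\mathcal F\subseteq j(B_V)$. Since $G$ acts on $C(D)$ by the isometries $f\mapsto f\circ g^{-1}$ preserving $\mathcal F$, this lifts to $h\colon G\to\mathrm{Is}(V)$, and $\langle v,\alpha(x)\rangle:=j(v)(x)$ defines a weak-$*$ continuous map, which is an embedding precisely because $\mathcal F$ separates points. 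Thus the whole problem collapses to producing $\mathcal F$.

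For the family itself I would use the median/betweenness structure of $D$. For a generalized arc $A=[a,b]$ let $r_A\colon D\to A$ be the first-point (nearest-point) retraction, which for dendrons is continuous and preserves betweenness (it is a median morphism onto the convex set $A$). Given a continuous monotone $\psi\colon A\to[0,1]$, the composite $\psi\circ r_A$ is continuous and monotone along every arc of $D$, and I take $\mathcal F$ to be the family of all such functions (equivalently, all continuous $f\colon D\to[0,1]$ carrying each generalized arc onto an interval monotonically). This family is bounded, symmetric and $G$-invariant, since a homeomorphism of $D$ preserves the separation relation, hence betweenness, hence carries arcs to arcs and monotone functions to monotone functions. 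It separates points: given $u\neq v$, take $A=[u,v]$ and a continuous monotone $\psi$ on the linearly ordered continuum $A$ with $\psi(u)\neq\psi(v)$ — such a $\psi$ exists, for instance the running supremum of a Urysohn function vanishing at $u$ and attaining its maximum at $v$ — so that $\psi\circ r_A$ separates $u$ and $v$. Restricted to each arc these are exactly the monotone functions of the linearly ordered setting of \cite{GM-c}, which is the source of the tameness we are after.

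The heart of the argument, and the step I expect to be the main obstacle, is to prove that $\mathcal F$ contains no independent sequence. I would deduce this from a selection principle: \emph{every bounded sequence in $\mathcal F$ has a pointwise convergent subsequence}. Granting this, no sequence in $\mathcal F$ can be independent, for any subsequence of an independent sequence is again independent, while an independent sequence admits no pointwise convergent subsequence: a cluster point $x^*$ of the witnessing points $x_N$ would force the values $f_{n_k}(x^*)$ to oscillate between $\le a$ and $\ge b$, contradicting convergence. The selection principle is precisely a Helly-type theorem for monotone functions on a median pretree, which I would establish first. The difficulty is that, unlike the classical Helly theorem, the functions in $\mathcal F$ are monotone with respect to the \emph{betweenness} of $D$ rather than a single fixed linear order, so the arcs and ``directions'' vary with $n$, and $D$ need not be metrizable. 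Handling the non-metrizable case (so that one genuinely obtains Rosenthal representability, and not merely the tameness already supplied by Theorem \ref{t:nullnessINTRO}) and organizing the branch/shadow structure so that a diagonal selection converges pointwise over all of $D$ is the crux. Once the selection principle is in hand, $\mathcal F$ is tame, and the reduction of the first paragraph completes the proof.
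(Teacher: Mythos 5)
Your reduction and your choice of family coincide with the paper's own: the criterion you sketch via DFJP interpolation is exactly Theorem \ref{t:criter1} (quoted from \cite{GM-rose}), the family of compositions $\psi\circ r_A$ is in substance the paper's $F=CM(D,[0,1])$, and your point-separation argument (van Mill--Wattel monotone retraction onto $[u,v]$, Theorem \ref{r:Mill} and Lemma \ref{l:retracts}, followed by a continuous monotone function on the arc) is the paper's, with your running-supremum construction serving as a workable substitute for the appeal to Nachbin. The genuine gap is the step you yourself flag as the crux: you never prove that the family contains no independent sequence, and the route you propose for it runs in the wrong direction. You want to establish a Helly-type selection principle first and deduce non-independence from it; but on a non-metrizable dendron no diagonal extraction can produce a subsequence converging pointwise at uncountably many points, and the only available proofs of such selection theorems pass through Rosenthal's dichotomy --- that is, one proves the absence of independent sequences first and obtains the convergent subsequence as a consequence. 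This is exactly the logical order in the paper: its generalized Helly theorem (Theorem \ref{GenHellyThm}) is a corollary of the tameness of monotone families together with Rosenthal's theorem, not a lemma used to prove it. So your plan reduces the theorem to an unproved step that is at least as hard as the theorem itself, and whose natural proof would force you back to a direct non-independence argument anyway.

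The paper closes this step without any selection argument, in two independent ways, either of which would complete your proof. For dendrons: the pointwise closure of $CM(D,[0,1])$ in $[0,1]^D$ consists of (possibly discontinuous) monotone functions, because the betweenness relation $R_B$ of a dendron is closed (Proposition \ref{p:Monod}, Lemma \ref{l:propM}, Lemma \ref{l:monotonicities}); every monotone function on a dendron is fragmented (Theorem \ref{t:MonotFragm}, via weak $\tau$-stability and Theorem \ref{t:GenFragm}); and a bounded family whose pointwise closure consists of fragmented functions contains no independent sequence \cite[Theorem 2.12]{GM-tame}. Alternatively, in the median-pretree generality, Theorem \ref{p:ConvFun} shows by a four-point median computation that no \emph{pair} of monotone functions is independent, which a fortiori rules out independent sequences. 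Your auxiliary observations are sound --- tameness alone does follow from Theorem \ref{t:nullnessINTRO}, and your argument that an independent sequence of continuous functions on a compact space has no pointwise convergent subsequence is correct --- but as written the proposal establishes everything except the heart of the matter.
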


In the proof of Theorem \ref{t:nullness} we use a method of Seidler \cite{Seid-90}. 
Note that by a result of Kerr and Li \cite{KL} every null dynamical $G$-system is tame. 
This explains the tameness conclusion in Theorem \ref{t:nullnessINTRO}.

\sk

In the proof of Theorem \ref{t:Dendron} 
we use the following useful characterization. 

\begin{thm} \label{t:criter1} \cite[Theorem 6.10]{GM-rose} 
A dynamical $G$-system $X$ is Rosenthal representable if and only if there exists a 
$G$-invariant point-separating bounded family $F$ of continuous functions $X \to \R$ which is 
tame as a family of functions.  
\end{thm}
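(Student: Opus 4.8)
The plan is to prove the two implications separately, using Rosenthal's $\ell_1$-dichotomy as the bridge between the dynamical notion of tameness and the geometric notion of a Rosenthal Banach space. For the forward implication, assume $(G,X)$ is Rosenthal representable, with a Rosenthal space $V$ and a representation $(h,\a)$ in which $\a\colon X\to V^*$ is a topological embedding. I would take as candidate family
\[
F=\{f_v : v\in B_V\},\qquad f_v(x)=\lan v,\a(x)\ran ,
\]
where $B_V$ is the unit ball of $V$. Each $f_v$ is continuous because $\a$ is weak-star continuous, and $F$ is bounded since $\sup_x\norm{\a(x)}<\infty$; it separates points because $\a$ is injective, and it is $G$-invariant because $f_v(gx)=\lan v,\a(gx)\ran=\lan h(g)v,\a(x)\ran=f_{h(g)v}(x)$ with $h(g)v\in B_V$. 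Tameness is the only substantive point: given a sequence $(v_n)$ in $B_V$, the hypothesis that $V$ contains no copy of $\ell_1$ lets me invoke Rosenthal's $\ell_1$-theorem to extract a weakly Cauchy subsequence $(v_{n_k})$, and then for each $x$ the scalars $f_{v_{n_k}}(x)=\lan v_{n_k},\a(x)\ran$ converge, so $(f_{v_{n_k}})$ converges pointwise on $X$. As every sequence in $F$ thus has a pointwise convergent subsequence, $F$ contains no independent sequence and is tame.

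For the reverse implication, suppose $F$ is a $G$-invariant, point-separating, bounded, tame family of continuous functions on $X$. The first step is to reinterpret tameness geometrically: by Rosenthal's $\ell_1$-theorem in its function form, tameness of $F$ is equivalent to saying that every sequence in $F$ has a pointwise convergent subsequence, and since $X$ is compact, for uniformly bounded sequences in $C(X)$ pointwise convergence coincides (via the bounded convergence theorem) with being weakly Cauchy; hence $F$ is a \emph{weakly precompact} subset of $C(X)$. I would then pass to $W$, the closed absolutely convex hull of $F\cup(-F)$, which is still $G$-invariant because each translation $f\mapsto f\circ g$ is a linear isometry of $C(X)$. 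Applying a Davis--Figiel--Johnson--Pe\l czy\'nski type factorization to $W$ produces a Banach space $V$ and an injective, bounded, $G$-equivariant operator $j\colon V\to C(X)$ with $W\subseteq j(B_V)$, together with a strongly continuous co-homomorphism $h\colon G\to Is(V)$ (strong continuity coming for free from compactness of $X$ and continuity of the action). The embedding $\a\colon X\to V^*$ is obtained by composing evaluation at points of $X$ with the adjoint $j^*$; it is weak-star continuous, bounded, intertwines the two actions by construction, and is a topological embedding because $F\subseteq j(V)$ separates the points of the compact space $X$.

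The heart of the matter, and the step I expect to be the main obstacle, is to guarantee that $V$ contains no copy of $\ell_1$, i.e.\ that the geometric smallness of the \emph{set} $F$ survives the passage to the Banach space it generates. Two facts must be combined here. The first is the stability of weak precompactness under convex hulls: the closed absolutely convex hull $W$ of the weakly precompact set $F$ is again weakly precompact. The second is that the interpolation space $V$ built from a weakly precompact generating set is itself a Rosenthal space, so that no $\ell_1$-sequence survives in its unit ball. Granting these, $V$ is Rosenthal, $(h,\a)$ is a Rosenthal representation with $\a$ a topological embedding, and $(G,X)$ is Rosenthal representable; together with the forward implication this yields the stated equivalence. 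I expect essentially all of the genuine work to concentrate in these two geometric inputs -- the convex-hull stability of weak precompactness and the preservation of the ``no-$\ell_1$'' property through the factorization -- the dynamical and continuity bookkeeping being routine once they are in place.
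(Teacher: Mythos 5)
Your proposal is correct and follows essentially the same route as the actual proof: the paper itself only cites this statement from \cite[Theorem 6.10]{GM-rose}, and the argument there is precisely your scheme --- Rosenthal's $\ell_1$-dichotomy yields tameness of the family $\{\langle v,\alpha(\cdot)\rangle : v\in B_V\}$ in the forward direction, while conversely the Davis--Figiel--Johnson--Pe\l czy\'nski factorization applied to the closed absolutely convex hull of the family $F$ (weakly precompact in $C(X)$ by tameness together with bounded convergence on the compact space $X$) produces the Rosenthal space and the equivariant embedding. The two geometric facts you isolate --- stability of weak precompactness under closed absolutely convex hulls, and the absence of $\ell_1$ in the interpolation space generated by a weakly precompact set --- are exactly the technical core established in \cite{GM-rose}, so your sketch is a faithful reconstruction of the cited argument.
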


We consider monotone (not necessarily, continuous) functions (Definition \ref{d:monot2}) on dendrons $D$. 
In Theorem \ref{t:MonotFragm} we show that any such function is fragmented (Baire class 1, on dendrites).  
In order to apply Theorem \ref{t:criter1} for dendrons $D$,  
in the role of the family $F$ we consider the set of all continuous monotone functions $D \to [0,1]$. 

Later in Theorem \ref{t:pretree} we generalize Theorem \ref{thm-tame} to \textit{compact median pretrees} 
with monotone group actions. This approach
 provides 
the following corollary (see Corollary \ref{c:Ends}).

\begin{cor}
Let $X$ be a $\Z$-tree. Denote by $Ends(X)$ the set of all its ends. Then for every monotone group action $G \curvearrowright X$ by homeomorphisms, the induced action of $G$ on the compact space 
$\widehat{X}:=X \cup Ends(X)$ 
is Rosenthal representable. 
\end{cor}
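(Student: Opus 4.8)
The plan is to realize $\widehat{X}$ as a compact median pretree on which $G$ acts monotonically by homeomorphisms, and then to quote Theorem~\ref{t:pretree} directly. Accordingly the argument divides into a structural step (identifying $\widehat{X}$ as a compact median pretree) and a functoriality step (extending the action), after which the conclusion is immediate.

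First I would record that every $\Z$-tree $X$ is a median pretree: the integer-valued tree metric induces a betweenness relation and hence segments $[u,v]$, and for any three points $a,b,c$ the three segments $[a,b],[b,c],[a,c]$ meet in a unique median point $m(a,b,c)$, so the pretree and median axioms hold on $X$. The essential work is to carry this structure to the ends. Equipping $\widehat{X}=X\cup Ends(X)$ with the shadow topology, one verifies that $\widehat{X}$ is a compact Hausdorff space in which $X$ is densely embedded, and that the betweenness relation extends to all of $\widehat{X}$ so that $m(a,b,c)$ remains well defined for every triple, including triples involving one, two, or three ends. This makes $\widehat{X}$ a \emph{compact median pretree}.

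Second, since each $g\in G$ is a homeomorphism of $X$ that is monotone --- that is, it preserves the betweenness relation and therefore the median operation $m$ --- it permutes the rays of $X$ and respects the equivalence defining ends. Hence the action extends canonically to a bijection of $Ends(X)$, and so to a bijection of $\widehat{X}$. Preservation of the segment structure makes this extension monotone, and its continuity for the shadow topology makes each $g$ a homeomorphism of $\widehat{X}$; thus $G$ acts on $\widehat{X}$ monotonically by homeomorphisms. With the structural and functoriality steps in place, Theorem~\ref{t:pretree} applies to the compact median pretree $\widehat{X}$ with its monotone $G$-action and yields that $(G,\widehat{X})$ is Rosenthal representable, which is the assertion of the corollary.

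The main obstacle I anticipate is the structural step, and within it the continuity and well-definedness of the median at the boundary. Checking that the shadow topology is compact and Hausdorff is routine, but extending the median to configurations of two or three distinct ends (and to an end together with interior points) requires care, since one must confirm that the defining intersection of segments is still a single point and that the resulting map $m\colon \widehat{X}^3\to\widehat{X}$ is continuous there; this is where the bulk of the verification lies before Theorem~\ref{t:pretree} can be invoked.
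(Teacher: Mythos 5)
Your proposal is correct and takes essentially the same route as the paper: the paper's proof consists of noting (Example \ref{ex:median}) that for a $\Z$-tree $X$ --- defined there as a median pretree with finite intervals $[u,v]$ --- the set $\widehat{X}=X\cup Ends(X)$ carries a natural $\tau_s$-compact median pretree structure by Malyutin \cite[Section 12]{Mal-14}, after which Theorem \ref{t:pretree} applies to the (extended) monotone action. The only difference is that the paper outsources your anticipated ``main obstacle'' to Malyutin's result, and that obstacle is in any case lighter than you fear: Theorem \ref{t:pretree} requires only that the shadow topology be compact and that $G$ act by monotone homeomorphisms --- Hausdorffness is automatic for median pretrees (Remark \ref{r:med}.3) and continuity of the retractions $\phi_{u,v}$ is derived from the pretree axioms in Proposition \ref{p:mRetr} --- so no continuity of the median map $m\colon \widehat{X}^3\to\widehat{X}$ needs to be verified.
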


As a related result we show, in Theorem \ref{GenHellyThm}, that 
Helly's selection principle can be extended to bounded monotone 
sequences of real valued functions defined on median pretrees (e.g., dendrons or linearly ordered sets). 
Also in Theorem \ref{t:FragmPretr} we show that 
every monotone real valued function $f: X \to \R$ on a compact Hausdorff pretree is fragmented. 

\sk

In the second part of the paper, 
as applications of Theorems \ref{t:nullnessINTRO} and \ref{thm-tame},
and building on ideas and results from \cite{DM-16} and \cite{MN-16}, 
we easily recover some old results, and prove some new ones,
concerning dynamical systems defined on (local) dendrons.

In Section \ref{cons} we show that 
when a group $G$ acts on a dendrite $X$ with no finite orbits and  
$M \subset X$ is the unique minimal set, then the action on $X$ is strongly proximal
and the action on $M$ is extremely proximal.
In Section \ref{discrete-spectrum} we show that for an amenable group $G$
every infinite minimal set in a dendrite system 
$(G,X)$ 
is almost automorphic.
This result was strengthened recently by Shi and Ye \cite{SY-18}, who have shown
that it is actually equicontinuous. In the final section we comment on the special case
where the acting group is the group of integers.

\sk 

\nt \textbf{Acknowledment.}  
Thanks are due to Nicolas Monod for enlightening conversations concerning tameness of
actions on dendrites. We also thank him and the organizers of the conference
entitled ``Structure and dynamics of Polish groups", held at the CIB in Lausanne, March 2018,
for the invitation to participate in this conference. The successful conference contributed a great deal
to the progress of this work.
Thanks are due also to Jan van Mill for his helpful advise.


\br

\section{Actions of groups on a regular continuum are null}

Following Goodman \cite{Go-74}, 
for a sequence $S = \{s_1, s_2, \dots\} \subset G$ we define the topological sequence entropy of 
a dynamical system $(G, X)$, 
with respect to $S$ and a finite open cover $\bA$ of $X$, by
$$
h_{top}(X,\bA ; S) = \lim_{n \to \infty} n^{-1} \log \left( N \left (\bigvee_{i=1}^{n} s_i(\bA)\right)\right),
$$
where $N(\cdot)$ denotes the minimal cardinality of a subcover. 
We say that $(G, X)$ is {\em null} if $h_{top}(X,\bA ; S) =0$ for all open covers 
$\bA$  of $X$ and all sequences $S$ in $G$.

\sk

The proofs in this section are taken, almost verbatim, from  Seidler's paper
\cite{Seid-90}. For a subset  $A \subset X$ we denote its boundary by $\partial(A)$.




\begin{lem}\label{l-1}
Let $\bA$ be an open cover of a compact continuum $X$ and let $P$ be the set of 
boundary points of elements of $\bA$. If $\bA$ is a minimal cover then the number of 
elements of $\bA$ is at most the number of elements in $P$.
\end{lem}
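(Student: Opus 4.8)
The plan is to exhibit $k := |\bA|$ pairwise disjoint nonempty subsets of $P$, which at once gives $|\bA| \le |P|$. (I treat the meaningful case $k \ge 2$; a one-element minimal cover would have to be $\{X\}$, where the count is degenerate.) Write $\bA = \{A_1, \dots, A_k\}$ and, for each $i$, set $U_i = \bigcup_{j \ne i} A_j$ and $E_i = X \setminus U_i$. Since $\bA$ is minimal, no $U_i$ equals $X$, so each $E_i$ is nonempty; being the complement of the open set $U_i$, it is closed. I would first record that the $E_i$ are pairwise disjoint: if $x \in E_i \cap E_j$ with $i \ne j$, then $x$ lies in no $A_l$ at all (membership in $E_i$ excludes every $A_l$ with $l \ne i$, and membership in $E_j$ excludes $A_i$), contradicting that $\bA$ covers $X$.

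Next I would pass to boundaries. Each $E_i$ is a nonempty \emph{proper} closed subset of the connected space $X$ (proper because $U_i \ne \emptyset$, as all members of a minimal cover are nonempty), hence $\partial E_i \ne \emptyset$, since a subset with empty boundary would be clopen and therefore $\emptyset$ or $X$. As $E_i$ is closed we have $\partial E_i \subseteq E_i$, so the sets $\partial E_i$ inherit pairwise disjointness from the $E_i$. Finally $\partial E_i = \partial U_i$ (a set and its complement share a boundary), and for the finite union $U_i = \bigcup_{j \ne i} A_j$ one has $\partial U_i \subseteq \bigcup_{j \ne i} \partial A_j \subseteq P$: any $x \in \partial U_i$ lies in some $\overline{A_j}$ but not in the open set $A_j$, whence $x \in \partial A_j$. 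Thus $\partial E_1, \dots, \partial E_k$ are $k$ pairwise disjoint nonempty subsets of $P$, and $|P| \ge k$ follows.

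The step that genuinely needs the hypotheses — and where connectedness enters decisively — is the nonemptiness of each $\partial E_i$: without connectedness some $E_i$ could be clopen and contribute no boundary point, and without properness an $E_i$ could equal $X$. These two conditions are exactly what force $k \ge 2$ and use that empty members have been discarded by minimality. Everything else is routine bookkeeping, namely that the boundary of a finite union of open sets is contained in the union of the individual boundaries, and that a closed set contains its own boundary.
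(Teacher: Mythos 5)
Your proof is correct and is essentially the paper's own argument: the paper, for each $j$, takes $V=\bigcup_{i\ne j}A_i$, uses minimality plus connectedness to get $\emptyset\ne\partial V\subseteq P$ with $\partial V\subseteq A_j$ and disjoint from all other members, exactly the mechanism you run in mirror image through the complements $E_i=X\setminus U_i$ and the identity $\partial E_i=\partial U_i$. The only detail to add is the paper's opening observation that compactness forces a minimal cover to be finite, which is what licenses your indexing $\bA=\{A_1,\dots,A_k\}$ and the step $\partial U_i\subseteq\bigcup_{j\ne i}\partial A_j$; your flagging of the degenerate cover $\{X\}$ (where the stated inequality literally fails since $P=\emptyset$) is apt, as the lemma is only invoked for covers with at least two elements.
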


\begin{proof}
 Assume that $\bA$ does not have a proper subcover. 
Then $\bA$ is a finite collection because $X$ is compact. Let $a$ be the number of elements of $\bA$ so
that $\bA = \{A_i: 1 \leq i \leq a \}$. 
Let $A_j \in \bA$, and let $V = \bigcup_{i\not= j} A_i$. 
Note that $V$ is open as a union of open sets. 
Because $\bA$ doesn't have a proper subcover we see that 
$X \setminus V \not= \emptyset$, so that  $\partial(V) \not= \emptyset$, 
since $X$ is connected. 
Further $\partial(V) \subset P$ because $V$ is a finite union of elements of $\bA$. 
As $V$ is open, $\partial(V) \cap V = \emptyset$, so that $\partial(V) \subset A_j$. 
Thus, for each element $A_j$ of $\bA$, 
there must exist a nonempty subset of $P$ contained in $A_j$ 
but disjoint from every other element of $\bA$. 
This implies that the number of elements of $\bA$ be at most the number of elements in $P$.

\end{proof}

\sk
 
\begin{lem}\label{l-2}
Let $G$ be a countable infinite group acting on a regular compact continuum $X$.
Let $S = \{s_0, s_1, s_2, \dots\}$ be a sequence of elements of $G$.
Let $\bA$ be a minimal open cover of $X$ of at least two elements such that every 
element of $\bA$ has finite boundary. Let $L_{\bA}$ be the total number of boundary points 
of elements of $\bA$. 
For each positive integer $n$, 
Let $\bM_n$ be a subcover of
minimum cardinality of $\bigvee_{i=0}^{n-1} s_i(\bA)$
and let $P_n$ be the collection of boundary points 
of elements of $\tilde{\bA} = \bigcup_{i=0}^{n-1} s_i(\bA)$. Then
\begin{enumerate}
\item
 For each positive integer $n$ every boundary point of an element of $\bM_n$ is in $P_n$.
\item
$N(\bigvee_{i=0}^{n-1} s_i(\bA)) \le n L_{\bA}$.
\end{enumerate}
\end{lem}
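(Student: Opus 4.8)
The plan is to prove the two parts in order, since part (2) will rest on part (1) together with Lemma \ref{l-1}.

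For part (1), I would start from the observation that every element $W$ of the join $\bigvee_{i=0}^{n-1} s_i(\bA)$ is a finite intersection $W = \bigcap_{i=0}^{n-1} s_i(A_{k_i})$ with each $A_{k_i} \in \bA$; in particular this applies to the elements of the subcover $\bM_n$. The key elementary fact is the inclusion $\partial\left(\bigcap_{i} U_i\right) \subseteq \bigcup_i \partial(U_i)$, valid for any finite family of sets. Applying it to $W$ gives $\partial(W) \subseteq \bigcup_{i=0}^{n-1} \partial(s_i(A_{k_i}))$. Since each $s_i(A_{k_i})$ is an element of $s_i(\bA) \subseteq \tilde{\bA}$, its boundary points belong to $P_n$ by the definition of $P_n$. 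Hence every boundary point of $W$ lies in $P_n$, which is exactly the claim.

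For part (2), I would first note that $\bM_n$, being a subcover of minimum cardinality, is automatically a minimal cover: any proper subcover of $\bM_n$ would be a smaller subcover of the join, contradicting the minimality of $|\bM_n| = N(\bigvee_{i=0}^{n-1} s_i(\bA))$. This lets me invoke Lemma \ref{l-1} with $\bM_n$ in the role of $\bA$: the number of elements of $\bM_n$ is at most the number of boundary points of its elements. By part (1) that set of boundary points is contained in $P_n$, so $N(\bigvee_{i=0}^{n-1} s_i(\bA)) = |\bM_n| \le |P_n|$. It then remains to bound $|P_n|$. Here I would use that each $s_i$ is a homeomorphism, so $\partial(s_i(A)) = s_i(\partial(A))$ for every $A \in \bA$; consequently the set of boundary points contributed by the translated cover $s_i(\bA)$ is the $s_i$-image of the corresponding set for $\bA$ and therefore has at most $L_{\bA}$ elements. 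Since $\tilde{\bA} = \bigcup_{i=0}^{n-1} s_i(\bA)$ is a union of $n$ such translated covers, subadditivity gives $|P_n| \le n L_{\bA}$, and combining this with the previous inequality yields $N(\bigvee_{i=0}^{n-1} s_i(\bA)) \le n L_{\bA}$.

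I do not expect a serious obstacle here; the argument is essentially bookkeeping built on Lemma \ref{l-1}. The only points requiring a little care are verifying the boundary-of-intersection inclusion used in part (1), and confirming that a minimum-cardinality subcover genuinely qualifies as a \emph{minimal} cover so that Lemma \ref{l-1} applies. Everything else follows from the fact that the $s_i$ act as homeomorphisms and hence preserve boundaries and the cardinalities of boundary sets.
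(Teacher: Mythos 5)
Your proposal is correct and follows essentially the same route as the paper: your boundary-of-intersection inclusion $\partial\bigl(\bigcap_i U_i\bigr) \subseteq \bigcup_i \partial(U_i)$ is exactly what the paper verifies by hand for part (1) (using that the $V_i$ are open, so $x \in \overline{V_i} = V_i \cup \partial(V_i)$, and that $M$ is open), and your part (2) — minimum cardinality implies no proper subcover, apply Lemma \ref{l-1} to $\bM_n$, and bound $|P_n| \le nL_{\bA}$ via the homeomorphisms $s_i$ preserving boundaries — is the paper's argument verbatim in substance.
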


\sk

\begin{proof}
(1) \  
Let $n$ be a positive integer and let $x$ 
be a boundary point of $M \in \bM_n$. 
By the definition of $\bM_n$, 
for each positive integer $0 \le i <  n$, there exists $V_i \in \tilde{\bA}$ 
such that $M = \bigcap_{i=0}^{n-1} V_i$. 
Let $x$ be a boundary point of $M$ 
and let $B$ be an open set containing $x$; 
clearly $B \cap M \not=\emptyset$ so that $B \cap V_i \not=\emptyset$ for each $i$. 
For each $V_i$, this requires that either $x \in \partial(V_i)$ or $x \in V_i$. Suppose that every
$V_i$ contains $x$. Then $x \in M$ by definition. But this contradicts $x \in \partial(M)$, 
because $M$ is open. Thus $x$ is a boundary point of at least one $V_i$ and thus $x \in P_n$.

\sk

 (2)\ 
Let $n$ be a positive integer. Because each $s_j$ is a homeomorphism the number of boundary 
points of elements of $s_j(\bA)$  is $L_{\bA}$ for every integer $j$. 
This requires that the number of elements of $P_n$ be at most $nL_{\bA}$. 
As every boundary point of an element of $\bM_n$ is in $P_n$ and as $\bM_n$ doesn't 
have a proper subcover, Lemma \ref{l-1} implies that there exist at most $nL_{\bA}$ elements in $\bM_n$. 
The desired result then follows from the definition of $\bM_n$.

\end{proof}

\sk

\begin{thm} \label{t:nullness} 
Every action 
of a group $G$ on a regular  
continuum is null, hence a fortiori tame.
\end{thm}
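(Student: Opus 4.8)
The plan is to establish nullness directly, by showing that the topological sequence entropy $h_{top}(X,\mathbf{C};S)$ vanishes for every open cover $\mathbf{C}$ of $X$ and every sequence $S=\{s_1,s_2,\dots\}\subset G$; tameness then comes for free from the theorem of Kerr and Li \cite{KL} that every null system is tame. Since $S$ involves only countably many group elements and the entropy depends solely on the homeomorphisms $s_i$, I would first replace $G$ by the countable subgroup $\langle S\rangle$ it generates, so that the hypotheses of Lemma \ref{l-2} are met (the case in which $S$ assumes only finitely many values is harmless, as the estimate below holds a fortiori).

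The next step is two successive reductions on the cover. Fix $\mathbf{C}$ and $S$. Because $X$ is a regular compact Hausdorff space, $\mathbf{C}$ admits a finite open refinement each of whose members has finite boundary (this is precisely the characterization of regularity for compacta recalled in the introduction). Choosing a minimal, i.e.\ irreducible, subcover of this refinement produces a cover $\bA$ that still refines $\mathbf{C}$ and all of whose members still have finite boundary. If this minimal cover reduces to a single set, that set must be $X$ itself, so every join $\bigvee_i s_i(\bA)$ equals $\{X\}$ and the entropy is trivially $0$; I may therefore assume $\bA$ has at least two elements, which is exactly the hypothesis of Lemma \ref{l-2}.

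The core estimate is now immediate. Writing $L_{\bA}$ for the total number of boundary points of elements of $\bA$, Lemma \ref{l-2}(2) gives $N\big(\bigvee_{i=1}^{n}s_i(\bA)\big)\le nL_{\bA}$ for every $n$, and hence
$$
h_{top}(X,\bA;S)=\lim_{n\to\infty}\frac{1}{n}\log N\Big(\bigvee_{i=1}^{n}s_i(\bA)\Big)\le \lim_{n\to\infty}\frac{\log n+\log L_{\bA}}{n}=0.
$$
To transfer this back to the original cover I would invoke monotonicity of $N(\cdot)$ under refinement: since $\bA$ refines $\mathbf{C}$, the join $\bigvee_i s_i(\bA)$ refines $\bigvee_i s_i(\mathbf{C})$, so $N(\bigvee_i s_i(\mathbf{C}))\le N(\bigvee_i s_i(\bA))$ and therefore $h_{top}(X,\mathbf{C};S)\le h_{top}(X,\bA;S)=0$. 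As $\mathbf{C}$ and $S$ were arbitrary, $(G,X)$ is null, and tameness follows.

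I expect the only delicate point to be the bookkeeping in these reductions: verifying that a minimal subcover of a rim-finite refinement of $\mathbf{C}$ is again a rim-finite cover refining $\mathbf{C}$, and that the refinement inequality for $N(\cdot)$ points in the required direction. Once Lemma \ref{l-2} is available the entropy estimate is a one-line computation, so there is no analytic obstacle; all the real content sits in the combinatorial boundary-counting of Lemmas \ref{l-1} and \ref{l-2}, where connectedness of $X$ is used to force each element of a minimal cover to carry a boundary point.
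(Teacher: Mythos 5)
Your proposal is correct and follows essentially the same route as the paper: both reduce to a minimal finite-boundary cover and apply the Seidler-style counting estimate of Lemma \ref{l-2}(2) to get $N\le nL_{\bA}$ and hence zero sequence entropy, with tameness from Kerr--Li. The only difference is that you spell out the bookkeeping the paper leaves implicit (passing to the countable subgroup $\langle S\rangle$, refining an arbitrary cover using regularity, and the monotonicity of $N(\cdot)$ under refinement), all of which you handle correctly.
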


\begin{proof}
Let $X$ be a regular compact space on which $G$ acts.
Let $\bA$ be a minimal open cover of 
$X$ containing at least two elements such that every element of $\bA$ has finite boundary. 
Let $L_{\bA}$ be the total number of boundary points of elements of $\bA$. 
Given $S = \{s_1, s_2, \dots\} \subset G$,
we have then, from Part (2) of Lemma \ref{l-2}, that
\begin{align*}
h_{top}(X,\bA ; S) & = \lim_{n \to \infty} n^{-1} \log \left( N \left (\bigvee_{i=1}^{n} s_i(\bA)\right)\right)\\
&  \le \lim_{n \to \infty} n^{-1} \log(n L_{\bA}) = 0.
\end{align*}
Thus $h_{top}(X; S)= 0$ and this shows that the system $(G, X)$ is null. 
By a theorem of Kerr and Li \cite{KL} it is also tame.
\end{proof}

%

\begin{remark}
In \cite[Theorem 12.2]{KL} the authors demonstrate with a simple proof, 
that every action of a convergence group $G$ on a compact space $X$
(in particular, any hyperbolic group acting on its Gromov boundary) is null.
\end{remark}

\br

\section{Dendrons, monotone functions and group actions}

\subsection{Standard betweenness relation and dendrons} 

All the topological spaces in this work are assumed to be Hausdorff.  
	Let $X$ be a connected 
	topological space and $u,v \in X$. 
	As usual, we say that a point $w$ \textit{separates} $u$ and $v$ in $X$ 
	if there exist in $X$ open disjoint neighborhoods $U,V$ of $u$ and $v$ 
	respectively such that $X \setminus\{w\}=U \cup V$. 


	For every $u,v$ in 
	$X$ define the ``generalized arc" 
	$$
	[u,v]=\{x \in X: \ x \ \text{separates $u$ from $v$}\} \cup \{u,v\}. 
	$$
	By definition $[u,v]=[v,u]$.

\begin{defin} \label{d:BrelationD} 
	Let $X$ be a topological space and $u,w,v \in X$. 
	We say that $w$ is \textit{between} $u$ and $v$ in $X$ if $w \in [u,v]$. 
	That is, $w$ separates $u$ and $v$ or $w \in \{u,v\}$. 
	This defines a natural betweenness ternary relation on $X$. 
	Denote by $R_B$ this ternary relation. Sometimes we write 
$\langle u,w,v \rangle$ instead of 
	$(u,w,v) \in R_B$. 
\end{defin}


\begin{lem} \label{l:D-prop} 
	Let $D$ be a dendron. 
	
	\ben 
	\item \cite{Mill-Wattel} \  Then $D$ is locally connected and the intersection 
	of arbitrary family of subcontinua of $D$ is either empty or is a continuum. 
	\item \cite[Cor. 2.15.1]{Mill-Wattel} \  $[u,v]$ is the smallest subcontinuum of 
	$D$ containing $u$ and $v$. That is, 
	$$
	[u,v]=\cap \{C \subseteq D: \ u,v \in C \ \text{and} \ C \ \text{is a subcontinuum}\}
	$$
	\item \cite{Bank13} \ 
	$[a,b] \subseteq [a,c] \cup [c,b]$ for every $a,b,c \in D$. 
	\een 
\end{lem}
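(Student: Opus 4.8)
All three statements are classical structural facts about dendrons; parts (1) and (2) are exactly the cited results of van Mill and Wattel \cite{Mill-Wattel}, and part (3) is from Bankston \cite{Bank13}. The plan is to isolate a single elementary separation observation, deduce (3) formally from (2), and then indicate where the genuine topological content of (1) and (2) sits.

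The cornerstone is the following containment observation, which I would establish first and directly from Definition \ref{d:BrelationD}: if $C \subseteq D$ is any subcontinuum with $u,v \in C$, then $[u,v] \subseteq C$. Indeed, take $w \in [u,v]$ with $w \neq u,v$; by definition $w$ separates $u$ from $v$, so $D \setminus \{w\} = U \cup V$ with $U,V$ disjoint and open, $u \in U$, $v \in V$. If $w \notin C$ then $C \subseteq U \cup V$, and $\{C \cap U, C \cap V\}$ is a partition of $C$ into nonempty relatively open sets (nonempty since $u \in C \cap U$ and $v \in C \cap V$), contradicting connectedness of $C$. Hence $w \in C$, and $[u,v] \subseteq C$.

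Given this, part (2) reduces to the single claim that $[u,v]$ is \emph{itself} a subcontinuum: once that is known, the containment observation shows that every competing subcontinuum contains $[u,v]$, so $[u,v]$ is the minimum and the displayed intersection formula follows. Part (3) is then immediate and purely formal: by (2) the sets $[a,c]$ and $[c,b]$ are subcontinua sharing the point $c$, so their union is compact and connected, hence again a subcontinuum, and it contains both $a$ and $b$; by the minimality in (2) we conclude $[a,b] \subseteq [a,c] \cup [c,b]$. The intersection clause of (1) runs through the same circle of ideas: if subcontinua $C_1,C_2$ meet in points $p,q$, then each $C_i$ contains $[p,q]$ by the containment observation, so $[p,q] \subseteq C_1 \cap C_2$, and since any two points of $C_1 \cap C_2$ are joined by such a connected subset, the intersection is connected (and compact, hence a continuum); the same argument handles an arbitrary family.

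The real work, and the main obstacle, is therefore proving that a generalized arc $[u,v]$ is a closed connected subset. Here I would follow van Mill and Wattel: equip $[u,v]$ with the relation $x \preceq y \iff x \in [u,y]$, verify from the dendron separation axiom that $\preceq$ is a linear order with least element $u$ and greatest element $v$, and check that its order topology coincides with the subspace topology inherited from $D$. This exhibits $[u,v]$ as a compact connected linearly ordered space, hence a continuum, and local connectedness of $D$ in (1) then follows by assembling small connected neighborhoods out of these arcs. Extracting the order axioms and the matching of topologies from the bare point-separation property is the delicate structural step, and it is precisely this analysis that the cited references carry out in full.
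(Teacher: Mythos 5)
Your reconstruction is correct, but there is nothing in the paper to compare it against: the paper gives no proof of this lemma, all three parts being quoted directly from van Mill--Wattel \cite{Mill-Wattel} and Bankston \cite{Bank13}. What you supply is therefore strictly more than the paper does, and the elementary glue you add is sound. The containment observation is valid with the paper's definition of separation (if $w\in[u,v]\setminus\{u,v\}$ and $w\notin C$, then $C\cap U$ and $C\cap V$ give a clopen partition of the continuum $C$); your derivation of (3) from the minimality in (2), using that $[a,c]\cup[c,b]$ is a continuum containing $a$ and $b$ since the two arcs share the point $c$, is correct; and connectedness of an arbitrary nonempty intersection $\bigcap_i C_i$ follows as you say, since any $p,q$ in it satisfy $[p,q]\subseteq\bigcap_i C_i$ and a set in which every two points lie in a common connected subset is connected. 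It is worth noting that the paper itself indicates, immediately after the lemma, a different explanation of (3): a dendron with its betweenness relation is a pretree, and $[a,b]\subseteq[a,c]\cup[c,b]$ is exactly axiom (A3) of Lemma \ref{l:PropPretree}, an order-theoretic fact needing no minimality of arcs; your route through (2) reaches the same conclusion but makes it contingent on the topological fact that $[u,v]$ is a continuum. That fact, and local connectedness, are indeed the genuine content, and deferring them to \cite{Mill-Wattel} matches the paper's own practice; just be aware that your one-clause sketch of local connectedness (``assembling small connected neighborhoods out of these arcs'') is not an argument --- in \cite{Mill-Wattel} local connectedness requires its own proof and does not simply fall out of the linear order structure of individual arcs.
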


Every dendron with its standard betweenness relation is a \textit{pretree} (Section \ref{s:pretrees}). This provides another explanation of \ref{l:D-prop}.3. 
The following proposition can be derived from a result of Bankston \cite[Theorem 3.1]{Bank15},
which, in fact, shows that this assertion holds for every locally connected continuum.  
The direct proof given below, for dendrons, was explained to us by Nicolas Monod.

\begin{prop} \label{p:Monod} 
	Let $D$ be a dendron. Then the 
	betweenness relation  $R_B$ on $D$ is closed. 
\end{prop}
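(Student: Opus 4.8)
The plan is to show that the complement of $R_B$ in $D^3$ is open, which suffices since $D$ is compact Hausdorff. So I fix a triple $(u,w,v)$ with $w\notin[u,v]$ and look for basic open neighborhoods whose product misses $R_B$. The whole argument reduces to producing a \emph{single} point $c\in D$ that simultaneously separates $w$ from $u$ and from $v$, while \emph{not} separating $u$ from $v$; once such a $c$ is found, local connectedness of $D$ (Lemma \ref{l:D-prop}, part 1) turns it into the desired product neighborhood.

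To produce $c$, first I would let $m$ be the median (gate) of the three points, i.e. the unique point of $[u,v]\cap[u,w]\cap[v,w]$; its existence and basic properties follow from the pretree structure of a dendron, ultimately from Lemma \ref{l:D-prop}, part 3. Because $w\notin[u,v]$ while $m\in[u,v]$, we have $m\neq w$, so by Lemma \ref{l:D-prop}, part 2, the generalized arc $[m,w]$ is a \emph{nondegenerate} subcontinuum and hence contains a point $c\in[m,w]\setminus\{m,w\}$. By definition of the arc such a $c$ separates $m$ and $w$. Using $m\in[u,w]$ (so that $[m,w]\subseteq[u,w]$ by minimality) together with the gate identity $[m,w]\cap[u,v]=\{m\}$, I would check that $c\in[u,w]\setminus\{u,w\}$ and, symmetrically, $c\in[v,w]\setminus\{v,w\}$, so that $c$ separates $w$ from each of $u$ and $v$; and that $c\notin[u,v]$, so that $c$ does \emph{not} separate $u$ from $v$. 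The degenerate case $u=v$, where $[u,v]=\{u\}$ and $m=u$, is handled the same way, choosing $c$ strictly between $u$ and $w$.

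Now I invoke local connectedness: the components of the open set $D\setminus\{c\}$ are open. Let $V$ be the component containing $w$ and $U$ the component containing $u$. Since $c$ does not separate $u$ from $v$, the point $v$ lies in $U$ as well, and since $c$ separates $w$ from $u$ we have $V\neq U$, hence $U\cap V=\emptyset$. I claim the open set $U\times V\times U$, which contains $(u,w,v)$, is disjoint from $R_B$. Indeed, take any $(u',w',v')\in U\times V\times U$ and suppose $w'\in[u',v']$. The arc $[u',v']$ is connected (Lemma \ref{l:D-prop}, part 2) with both endpoints in $U$, while $c\notin U$, so $c\neq u',v'$. If $c\in[u',v']$ then $c$ separates $u'$ and $v'$, contradicting that they lie in the single component $U$; if $c\notin[u',v']$ then $[u',v']$ is a connected subset of $D\setminus\{c\}$ meeting $U$, hence entirely contained in $U$, forcing $w'\in U$ and contradicting $w'\in V$. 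Either way $w'\notin[u',v']$, which proves the claim and shows that $R_B$ is closed.

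I expect the main obstacle to be the second paragraph: verifying cleanly that an interior point $c$ of the arc $[m,w]$ separates $w$ from both $u$ and $v$ while leaving $u$ and $v$ on the same side. This is exactly the tripod/median behaviour of a dendron, and it is where the betweenness structure — concretely Lemma \ref{l:D-prop}, part 3, together with the minimality in Lemma \ref{l:D-prop}, part 2 — must be applied with care. The remaining ingredients (openness of the components via local connectedness, and the connectedness dichotomy for $[u',v']$) are then routine.
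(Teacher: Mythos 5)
Your proof is correct, but it takes a genuinely different route from the paper's. The paper argues by contradiction with nets: given $w\notin[u,v]$, it uses compactness of $[u,v]$ together with the Hausdorff property to choose a neighborhood $W$ of $w$ disjoint from $U\cup[u,v]\cup V$, with $U,V$ \emph{connected closed} neighborhoods of $u,v$ (local connectedness), and then kills any approximating triple via the containment $[u',v']\subseteq[u',u]\cup[u,v]\cup[v,v']$, i.e.\ parts 2 and 3 of Lemma \ref{l:D-prop}. You instead build a single cut point: the median $m$ of $u,v,w$, a point $c$ strictly between $m$ and $w$, and then exploit the fact that the components of $D\setminus\{c\}$ are open, obtaining the product neighborhood $U\times V\times U$ of $(u,w,v)$ missing $R_B$. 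Both arguments ultimately exhibit an open product neighborhood in the complement of $R_B$, but your mechanism (median, gate identity, component decomposition at a cut point) is structurally different from the paper's (separation of the compact arc plus the triangle-type containment of arcs), and it is closer in spirit to the $\tau$-stability notion of Definition \ref{d:stable} that the paper introduces later: your $c$ is a single separating witness that works uniformly on a neighborhood. Two citation-level points to fix, though neither is a gap: the existence and gate property of the median ($[m,w]\cap[u,v]=\{m\}$) do not follow from Lemma \ref{l:D-prop} alone --- they are the median-pretree structure of dendrons, recorded in the paper only later (Examples \ref{ex:median}.1, with references, and implicitly in Proposition \ref{p:mRetr}), so you should cite that or prove the gate identity from the pretree axioms (it follows from (B3) and (A2)); and the step ``nondegenerate subcontinuum, hence contains $c\in[m,w]\setminus\{m,w\}$'' should be justified by the defining separation axiom of a dendron applied to $m\neq w$, not by nondegeneracy of a continuum, which in general guarantees no such point. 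What each approach buys: the paper's proof is shorter and uses only Lemma \ref{l:D-prop} plus compactness and normality; yours avoids nets and the perturbed-arc containment, needs the (deeper, but standard) median machinery, and yields the slightly stronger local statement that non-betweenness of $(u,w,v)$ is witnessed by one point $c$ valid on an entire product of connected open sets.
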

\begin{proof}
	Suppose that in $D$ we have converging nets,  $\lim u_i =u, \lim_i w_i =w, \lim v_i =v$ 
	such that $\langle u_i,w_i,v_i \rangle$ for every $i \in I$. 
	We have to show that $\langle u,w,v \rangle$. 
	Assuming 
	the contrary, we have $w \notin [u,v]$. Since $[u,v]$ is compact there exist 
        neighborhoods $U$, $W$, $V$ of $u,w,v$ respectively such that $W \cap (U \cup [u,v] \cup V) =\emptyset$. 
	Since $D$ is locally connected, we can assume, in addition, that $U$ and $V$ are connected and closed.  
	There exists $i_0$ such that $w_{i_0} \in W, u_{i_0} \in U,v_{i_0} \in V$. 
	By Lemma \ref{l:D-prop}.2 we have 
	$[u_{i_0},u] \subset U, [v,v_{i_0}] \subset V$. 
	Then $[u_{i_0},v_{i_0}] \subset  [u_{i_0},u] \cup [u,v] \cup [v,v_{i_0}]$ by Lemma \ref{l:D-prop}.3. By our choice $W$ does not meet $[u_{i_0},u] \cup [u,v] \cup [v,v_{i_0}]$,  hence, $w_{i_0} \notin [u_{i_0},v_{i_0}]$. 
	This contradiction completes the proof.
\end{proof}

Note that in the \textit{comb space} (which is not locally connected) the relation $R_B$ is not closed (Bankston  \cite[Exercises 3.4(ii)]{Bank15}). 


%


\sk \sk
\subsection{Monotone functions}

 	\begin{defin} \label{d:monot2} 
 	Let us say that a (\textbf{not necessarily continuous}) map $f: X \to Y$ between two connected topological spaces is:
 	
 	\ben  
 	\item  \textit{B-monotone} 
 	if it respects the betweenness relations $R_B$ (from Definition \ref{d:BrelationD}) of $X$ and $Y$. 
	Meaning that $\langle u,w,v \rangle$ implies $\langle f(u),f(w),f(v) \rangle$. 
	It is equivalent to the requirement that $f$ be \textit{interval preserving}: $f[u,v] \subseteq [f(u),f(v)]$. 
 	 Notation: $f \in M_B(X,Y)$. For $Y=\R$ we write $ M_B(X)$.  
 	 	\item 
 	 \textit{C-monotone} 
	 (or, simply, \textit{monotone})  
	 if the preimage $f^{-1}(A)$ of every connected subset $A \subset Y$ is connected.
 	 Notation: $f \in M_C(X,Y)$. For $Y=\R$ we write $M_C(X)$. 
 	\een
 \end{defin}

For continuous maps on continua definition (2) is well known. 
See, Kuratowski \cite[Section 46]{Kur}. 
Not every B-monotone continuous function is C-monotone. 
For a concrete example consider the distance (continuous)  
 function 
	$$f: [0,1]^2 \to \R, x \mapsto d(x,K), \ K:=([0,\frac{1}{3}] \cup [\frac{2}{3},1]) \times [0,1].$$ 
The fiber $f^{-1}(0)=K$ is not connected. So $f$ is not monotone. 
On the other hand, $f$ is B-monotone because $[0,1]^2$ has no separating points. 

\sk

\begin{lem} \label{l:propM} \ Let $X$ be a connected space. 
	\ben 
	\item Composition of B-monotone (C-monotome) functions is B-monotone (resp., C-monotone). 
	\item Let $G \curvearrowright X$ be an action of a group $G$ on $X$ by homeomorphisms. 
	For every $g \in G$ and every $f \in$ $M_B(X)$ ($f \in$ $M_C(X)$) we have $fg \in$ $M_B(X)$ (resp., $fg \in$ $M_C(X)$), 
	where $(fg)(x):=f(gx)$. 
	
	\item The set 
	$M_B(X,D)$ is a pointwise closed (hence, compact) subset of $D^X$ for every dendron $D$. 
	\item The set 
	$M_B(X,[c,d])$ is a pointwise closed (hence, compact) subset of $[c,d]^X$. 
	
	\een 
\end{lem}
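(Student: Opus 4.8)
The plan is to prove the four parts in order, since the later parts build on the betweenness/interval-preservation formulation of part (1). For part (1), I would work with the \emph{B-monotone} case through the interval-preserving characterization $f[u,v]\subseteq[f(u),f(v)]$: if $f\in M_B(X,Y)$ and $h\in M_B(Y,Z)$, then $(h\circ f)[u,v]\subseteq h[f(u),f(v)]\subseteq[h(f(u)),h(f(v))]$, so the composition is interval preserving, hence B-monotone. For the \emph{C-monotone} case I would use the preimage-of-connected definition directly: if $A\subseteq Z$ is connected then $h^{-1}(A)$ is connected (as $h\in M_C$), and then $f^{-1}(h^{-1}(A))=(h\circ f)^{-1}(A)$ is connected (as $f\in M_C$), so $h\circ f\in M_C$. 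This is routine and I would not belabor it.

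For part (2), I would observe that each $g\in G$ acts as a homeomorphism, and that a homeomorphism between connected spaces is both B-monotone and C-monotone: it preserves the separation relation (so it preserves $R_B$) and it carries connected sets to connected sets in both directions (so preimages of connected sets under $g\colon X\to X$ are connected). Then $fg=f\circ g$ is a composition of monotone maps, and part (1) applies. The only small point to verify is that $g$ itself respects the betweenness relation, which is immediate because $w$ separates $u,v$ in $X$ if and only if $g(w)$ separates $g(u),g(v)$, since $g$ is a homeomorphism and separation is a purely topological property.

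For parts (3) and (4) — which I expect to be the genuine content of the lemma — the goal is pointwise closedness of $M_B(X,D)$ inside $D^X$ (respectively of $M_B(X,[c,d])$ inside $[c,d]^X$). Here I would lean crucially on Proposition \ref{p:Monod}, which says the betweenness relation $R_B$ on a dendron $D$ is \emph{closed}. Take a net $f_\lambda\in M_B(X,D)$ converging pointwise to some $f\in D^X$. To show $f\in M_B(X,D)$, fix $u,w,v\in X$ with $\langle u,w,v\rangle$ in $X$; since each $f_\lambda$ is B-monotone we have $\langle f_\lambda(u),f_\lambda(w),f_\lambda(v)\rangle$ in $D$. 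Pointwise convergence gives $f_\lambda(u)\to f(u)$, $f_\lambda(w)\to f(w)$, $f_\lambda(v)\to f(v)$, and since $R_B$ is closed in $D^3$ the limiting triple satisfies $\langle f(u),f(w),f(v)\rangle$. Hence $f$ preserves betweenness, i.e. $f\in M_B(X,D)$, proving closedness; compactness then follows because $D^X$ is compact (Tychonoff, $D$ compact).

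The main obstacle is making sure the closedness of $R_B$ is applied at the right level: the separation/betweenness relation must be closed \emph{in $D$}, the target, which is exactly where the dendron hypothesis and Proposition \ref{p:Monod} enter — this is why parts (3) and (4) require a dendron (or the order interval $[c,d]$) as target rather than an arbitrary connected space. For part (4), I would note that a closed order interval $[c,d]$ in $D$ is itself a dendron (or directly that the induced betweenness relation on $[c,d]$ is closed as a subspace of the closed relation on $D$), so the identical net argument applies verbatim, giving pointwise closedness of $M_B(X,[c,d])$ in $[c,d]^X$ and hence compactness.
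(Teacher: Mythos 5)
Your proposal is correct and takes essentially the same route as the paper: parts (1) and (2) are dismissed as routine (the paper does the same), and the core of part (3) is exactly the paper's argument---pass a pointwise-convergent net through the closedness of the betweenness relation $R_B$ in $D^3$ (Proposition \ref{p:Monod}) to conclude the limit is B-monotone, with (4) treated as a special case since $[c,d]$ is itself a dendron. Nothing needs fixing.
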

\begin{proof}
	(1) and (2) are straightforward. 
	
	
	(3) 
	Let $f: X \to D$ be the pointwise limit of the net $f_i: X \to D$, $i \in I$ where each 
	$f_i \in M_B(X, D)$. 
	We have to show that $f$ is also B-monotone. That is, \nl $\langle u, w, v \rangle$ in $X$ implies that 
	$\langle f(u), f(w), f(v) \rangle$ in $D$. 
	Since every $f_i$ is B-monotone we have $\langle f_i(u),f_i(w),f_i(v) \rangle$. 
	As we already mentioned (Proposition \ref{p:Monod}) the betweenness relation $R_B$ is closed in $D^3$. So,	since $f$ is the pointwise limit of $f_i$ we get $\langle f(u), f(w), f(v) \rangle$.

	(4) is a particular case of (3). 
\end{proof}

\begin{lem} \label{l:monotonicities} 
	For every dendrons $X, Y$ we have $M_C(X,Y) = M_B(X,Y)$ and $M_C(X) = M_B(X)$. 
\end{lem}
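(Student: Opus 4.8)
The plan is to reduce both inclusions to a single structural fact about dendrons: \emph{a subset $C$ of a dendron is connected if and only if it is convex with respect to the betweenness relation}, i.e.\ $a,b\in C$ implies $[a,b]\subseteq C$. I would establish this equivalence first. For connected $\Rightarrow$ convex, suppose $C$ is connected, $a,b\in C$, and $w\in[a,b]\setminus\{a,b\}$; then $w$ separates $a$ from $b$, so $X\setminus\{w\}=U\cup V$ with $U,V$ disjoint open, $a\in U$, $b\in V$, and if $w\notin C$ then $C=(C\cap U)\cup(C\cap V)$ would disconnect $C$, a contradiction, so $w\in C$. For the converse, if $C$ is convex but disconnected, choose open $U',V'$ in $X$ witnessing the disconnection and pick $a,b$ in the two pieces; the arc $[a,b]$ is a continuum by Lemma \ref{l:D-prop}.2 and lies in $C$ by convexity, yet $U',V'$ disconnect it, a contradiction. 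This same dichotomy holds verbatim for $Y=\R$, since the connected subsets of $\R$ are exactly its (convex) intervals; this lets me treat the second equality $M_C(X)=M_B(X)$ as the special case $Y=\R$ of the first.

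With the characterization in hand the two inclusions are short. For $M_C(X,Y)\subseteq M_B(X,Y)$, I take $f\in M_C(X,Y)$ and $w\in[u,v]$: the set $f^{-1}([f(u),f(v)])$ is connected, because $[f(u),f(v)]$ is connected and $f$ is C-monotone, and it contains $u,v$; being connected it is convex, so it contains the whole arc $[u,v]$, whence $f(w)\in[f(u),f(v)]$, i.e.\ $\langle f(u),f(w),f(v)\rangle$. For $M_B(X,Y)\subseteq M_C(X,Y)$, I take $f\in M_B(X,Y)$ and a connected $A\subseteq Y$: for $x,x'\in f^{-1}(A)$ and $w\in[x,x']$, interval preservation gives $f(w)\in[f(x),f(x')]\subseteq A$, the inclusion holding because $A$ is connected, hence convex. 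Thus $f^{-1}(A)$ is convex and therefore connected. Note that neither argument uses continuity of $f$, in agreement with the fact that the definitions in \ref{d:monot2} allow arbitrary maps.

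The only real obstacle is the convex $\Leftrightarrow$ connected characterization, and within it the direction convex $\Rightarrow$ connected, which is where one must know that the generalized arc $[a,b]$ is an honest continuum rather than merely a convex set. This is exactly the content of Lemma \ref{l:D-prop}.2, so the dendron hypothesis on $X$ enters precisely at that point; once that lemma is available, everything else is a direct unwinding of the definitions of separation, convexity, and the two monotonicities.
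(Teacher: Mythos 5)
Your proof is correct. It reorganizes the argument around a lemma that the paper's proof uses only implicitly: in a dendron, a subset is connected if and only if it is convex for the betweenness relation. The paper runs both inclusions as direct contradiction arguments that inline exactly the two facts you isolate: for $M_C(X,Y)\subseteq M_B(X,Y)$ it shows that the separating point $w$ disconnects $f^{-1}(C)$ when $w\notin f^{-1}(C)$ (your connected $\Rightarrow$ convex, applied in $X$), and for the reverse inclusion it picks $u,v$ in a disconnected preimage with $[u,v]\not\subseteq f^{-1}(C)$ and notes that the connected set $C$ misses $f(w)$, so $f(w)$ cannot separate $f(u)$ from $f(v)$ (your convex $\Rightarrow$ connected in $X$, via Lemma \ref{l:D-prop}.2, together with connected $\Rightarrow$ convex in $Y$). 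So the mathematics coincides; your refactoring buys a single reusable characterization from which both inclusions follow formally, and it dovetails with the convexity language the paper itself adopts later for median pretrees (Remark \ref{r:med}.2, Theorem \ref{p:ConvFun}). The treatments of the real-valued case genuinely differ: the paper deduces $M_C(X)=M_B(X)$ by composing with the order equivalence $\R \to (0,1)\subset [0,1]$, staying entirely within the class of dendrons, whereas you observe that connected $=$ convex holds verbatim in $\R$ and run the same argument directly, avoiding the (routine but unstated) check that this composition preserves both monotonicities. One point worth making explicit in your convex $\Rightarrow$ connected step: the open sets $U',V'\subseteq X$ witnessing a disconnection of $C$ need not be disjoint in $X$ --- only their traces on $C$ are disjoint; your argument survives precisely because $[a,b]\subseteq C$ by convexity, so the traces of $U'$ and $V'$ on the arc remain disjoint and disconnect the continuum $[a,b]$.
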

\begin{proof} (1) 
	$M_C(X,Y) \subseteq M_B(X,Y)$. 
	
	Assuming the contrary let $f: X \to Y$ be C-monotone but not B-monotone. Then there exist $u,w,v \in X$ such that $\langle u, w, v \rangle$ but 
	$\neg  \langle f(u), f(w), f(v) \rangle$. 
	 This means that $w$ separates the points $u,v$ but $f(w)$ does not separate the pair $f(u), f(v)$ in $Y$ and $f(w) \notin \{f(u),f(v)\}$. So, 
	$$
	f(w) \notin C:=[f(u),f(v)]. 
	$$ 
	Since $Y$ is a dendron the generalized arc $C=[f(u),f(v)]$ is connected (Lemma \ref{l:D-prop}). 
	By the C-monotonicity of the function $f$ the preimage $f^{-1}(C)$ is connected in $X$. 
	As 
	$w$ separates $u,v$ in $X$ we have
	$$
	X \setminus \{w\} = U \cup V,
	$$
	where $U,V$ are 
	disjoint open neighborhoods of $u$ and $v$ respectively. Then 
	$u \in f^{-1} (C) \cap U$ and $v \in f^{-1} (C) \cap V$  are disjoint,  
	nonempty (and open in $f^{-1}(C)$). The union of these subsets is 
	$f^{-1}(C)$ (because $w \notin f^{-1}(C)$). So, 
	we get that $f^{-1}(C)$ is not connected, a contradiction to the fact that $f$ is C-monotone. 
	
	\sk (2) 
	 $M_C(X,Y) \supseteq M_B(X,Y)$. 
	
	Assuming the contrary let $f \in M_B(X,Y)$ such that $f \notin M_C(X,Y)$. 
	Then there exists a connected subset $C \subset Y$ such that $f^{-1}(C)$ is not connected in $X$. 
	So there exist (distinct) $u,v \in f^{-1}(C)$ such that the ``generalized arc" 
	$[u,v]$ (which is connected 
	by Lemma \ref{l:D-prop} 
	because $X$ is a dendron) is not contained in $f^{-1}(C)$. 
	Therefore, there exists $w \in [u,v]$ such that $w \notin f^{-1}(C)$. 
	Then $\langle u,w,v \rangle$ but it is not true that 
	$\langle f(u),f(w),f(v) \rangle$ because 
	$f(u), f(v) \in C   \subset Y \setminus \{f(w)\}$, and $C$ is connected. 
	
	This proves $M_C(X,Y) = M_B(X,Y)$. 
	In order to conclude that $M_C(X) = M_B(X)$ use the linear order equivalence 
	$\R \to (0,1) \subset Y:=[0,1], \ x \mapsto \frac{x}{1+|x|}$. 
\end{proof}

\begin{remark}
Lemma \ref{l:monotonicities} suggests dropping the subscripts ``C" and ``B" 
and writing simply $M(D_1,D_2)$. We write $CM(D)$ for the 
set of continuous monotone real valued functions on 
$D$.
\end{remark}

Recall that the set $\F(X)$ of 
fragmented real valued functions on $X$ is a vector space over $\R$. 
For the definition and properties of fragmented functions we refer to 
\cite{Me-nz,GM-HNS,GM-rose}. In the present paper we only use fragmentability in the case of real valued 
functions $f: X \to \R$ defined on compact $X$. In this case the 
 fragmentability of $f$ is equivalent to the PCP-property (see \cite{GM-HNS}). 
Meaning that for every closed nonempty subset $Y \subseteq X$ the restriction $f|_Y: Y \to \R$ has a point of continuity.  
 For Polish $X$, $\F(X)$ coincides with
the set $\mathcal{B}_1(X)$ of Baire  class 1 functions. 
A function $f: X \to Y$ is said to be \emph{Baire class 1 function} if the inverse image $f^{-1}(O)$ of every open set 
$O \subset Y$ is $F_\sigma$ in $X$. 

In \cite{GM-tame, Me-Helly} we proved that every linear order preserving function on a compact linearly ordered topological space is fragmented. 
The following theorem is a result in the same spirit. 


\begin{defin} \label{d:stable}
Let $R$ be an abstract ternary relation on a Hausdorff topological space $(X,\tau)$.
\begin{enumerate}
\item
We say that $R$ is \textit{$\tau$-stable} if for every pair of distinct points $u,v \in X$, 
there are,	a point $w \in X$ with $w \in [u,v] \setminus \{u,v\}$, 
	(where $[u,v]:=\{x \in X: \langle u,w,v \rangle\} \cup \{u,v\}$), 
	and neighborhoods $U,V$ of $u,v$ respectively, 
	such that $w \in [x,y]$ for every $x \in U, y \in V$. 
\item
We say that $R$ is 
 \textit{weakly $\tau$-stable} 
  if for every infinite subset 
$K \subset X$ there exist: a pair of distinct points $u,v \in K$, 
a point $w \in X$ with $w \in [u,v] \setminus \{u,v\}$, 
and neighborhoods $U,V$ of $u,v$ respectively, 
such that $w \in [x,y]$ for every $x \in U \cap K, y \in V \cap K$. 
\end{enumerate}
\end{defin}


Let $R$ be a ternary relation on $X$ and let 
$f: X \to \R$ respect $R$ and the standard betweenness relation of the reals.   
We will say that $f$ is an  R-\textit{monotone} function.

\begin{thm} \label{t:GenFragm} 
	Let $(X,\tau)$ be a compact space and $R$ a ternary relation on $X$ which is 
	weakly $\tau$-stable. 
	Then every $R$-monotone function $f: X \to \R$ is fragmented. 
\end{thm}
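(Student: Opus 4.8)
The plan is to verify fragmentability in its local (PCP) form. By the equivalence recalled above, it suffices to prove that for every nonempty closed $Y\subseteq X$ and every $\eps>0$ there is a nonempty relatively open $O\subseteq Y$ with $\diam f(O)<\eps$; this is exactly the fragmentability condition, and it yields a point of continuity of $f|_Y$ on each closed $Y$ by a routine Baire argument (using that the compact $Y$ is a Baire space). Composing $f$ with an increasing homeomorphism $\R\to(0,1)$ — which preserves the betweenness relation of $\R$, hence preserves $R$-monotonicity, and which does not affect fragmentability — I may assume $f:X\to[0,1]$ is bounded. I then argue by contradiction: fix $Y$ and $\eps$ and suppose $\diam f(O)\ge\eps$ for \emph{every} nonempty relatively open $O\subseteq Y$. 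In particular $Y$ has no isolated point (an isolated point would be a relatively open singleton of zero diameter), so every nonempty relatively open $O\subseteq Y$ is infinite.

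First I extract a basic dichotomy. Let $O\subseteq Y$ be nonempty relatively open, hence infinite. Applying weak $\tau$-stability to $K=O$ produces distinct $u,v\in O$, a point $w\in[u,v]\setminus\{u,v\}$, and neighborhoods $U\ni u$, $V\ni v$ with $w\in[x,y]$ for all $x\in U\cap O$, $y\in V\cap O$. Put $c=f(w)$ and $O_u=U\cap O$, $O_v=V\cap O$; these are nonempty relatively open subsets of $Y$. Since $f$ is $R$-monotone, $w\in[x,y]$ forces $c\in[\min(f(x),f(y)),\max(f(x),f(y))]$ for all such $x,y$. A short case analysis shows that $f$ cannot take values on both strict sides of $c$ on $O_u$ (otherwise $f\equiv c$ on $O_v$), and symmetrically for $O_v$; moreover $f\le c$ cannot hold on both $O_u$ and $O_v$ (nor $f\ge c$ on both), as this again forces $f$ constant on one of the two pieces. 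Every such ``constant'' alternative contradicts $\diam f\ge\eps$ immediately, so we are left with the two genuine cases: either $f\le c$ on $O_u$ and $f\ge c$ on $O_v$, or $f\ge c$ on $O_u$ and $f\le c$ on $O_v$.

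The remaining difficulty — which I expect to be the only real obstacle — is that this dichotomy delivers merely \emph{one-sided} bounds on $O_u$ and $O_v$, not small oscillation. I resolve it by choosing $O$ near the infimum of the upper envelope of $f$. Set $\gamma=\inf\{\sup f(O'):\ \emptyset\ne O'\subseteq Y\ \text{relatively open}\}\in[0,1]$. Fix $\delta$ with $0<\delta<\eps$ and choose the relatively open $O$ above so that, in addition, $\sup f(O)<\gamma+\delta$. In the first case, $f\le c$ on $O_u$ gives $\sup f(O_u)\le c$, while $\sup f(O_u)\ge\gamma$ by definition of $\gamma$, so $c\ge\gamma$; and $f\ge c$ on $O_v$ gives $\inf f(O_v)\ge c\ge\gamma$. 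Since $O_v\subseteq O$,
$$
\diam f(O_v)=\sup f(O_v)-\inf f(O_v)\le \sup f(O)-\gamma<(\gamma+\delta)-\gamma=\delta<\eps .
$$
The second case is symmetric, with the roles of $O_u$ and $O_v$ interchanged, yielding $\diam f(O_u)<\eps$. In either case we have produced a nonempty relatively open subset of $Y$ of $f$-diameter $<\eps$, contradicting our standing assumption. Hence no such $Y$ and $\eps$ exist, and $f$ is fragmented.

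The device of comparing one-sided bounds with $\gamma=\inf_{O}\sup f(O)$ is the heart of the argument: it converts the one-sided information coming from a single application of weak $\tau$-stability into a genuine diameter bound, and thereby avoids any transfinite recursion. I would double-check only the two routine reductions — that post-composition with an order-homeomorphism $\R\to(0,1)$ preserves $R$-monotonicity and fragmentability, and that the ``closed-set diameter'' condition $[\star]$ implies the PCP/continuity-point form — both of which are standard in the fragmentability literature cited in the paper.
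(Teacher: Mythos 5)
Your proof is correct, but it takes a genuinely different route from the paper's. The paper's proof of Theorem \ref{t:GenFragm} starts from van Dulst's characterization of non-fragmentability \cite[Lemma 3.7]{Dulst}: if $f$ is not fragmented, there are a nonempty closed $L\subseteq X$ and reals $\alpha<\beta$ such that $f^{-1}(-\infty,\alpha)\cap L$ and $f^{-1}(\beta,\infty)\cap L$ are \emph{both dense} in $L$. One application of weak $\tau$-stability to $L$ produces $u,v,w,U,V$, and the two-sided density supplies points $u',u''\in U\cap L$ and $v',v''\in V\cap L$ with $f(u'),f(v')<\alpha$ and $f(u''),f(v'')>\beta$; whichever side of $\beta$ the value $f(w)$ lies on, betweenness is violated, and the contradiction is immediate. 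You avoid van Dulst's lemma entirely: working from the raw oscillation form of fragmentability, a single application of weak stability gives you only \emph{one-sided} bounds ($f\le c$ on one of $O_u,O_v$ and $f\ge c$ on the other, all degenerate alternatives being killed outright by the standing oscillation hypothesis), and your envelope quantity $\gamma=\inf_{O}\sup f(O)$, applied to an $O$ chosen with $\sup f(O)<\gamma+\delta$, is exactly the device that upgrades this one-sided information to a two-sided diameter bound. In effect your $\gamma$-trick substitutes for the two-sided density that van Dulst's lemma hands the paper for free; the result is a self-contained, more elementary argument (no imported Banach-space lemma), at the modest cost of a longer case analysis and the preliminary reduction to bounded $f$ — which your version genuinely needs to keep $\gamma$ finite, and which you handle correctly, whereas the paper's proof applies to unbounded $f$ as is. The individual steps all check out: the dichotomy deductions from $c\in[\min(f(x),f(y)),\max(f(x),f(y))]$ are valid (including the endpoint cases $w\in\{x,y\}$ permitted by the interval notation in Definition \ref{d:stable}), $O_u$ and $O_v$ are nonempty and relatively open in $Y$, and the perfectness of $Y$ under your standing assumption supplies the infiniteness required to invoke weak $\tau$-stability.
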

\begin{proof} 
	If $f$ is not fragmented then, by \cite[Lemma 3.7]{Dulst} there exists a closed nonempty subspace 
	$L \subset X$ and real numbers $\alpha < \beta$ 
	such that the subsets 
	$f^{-1}(-\infty,\alpha) \cap L$ and $f^{-1} (\beta,\infty) \cap L$ 
	are dense in $L$. 
	That is, 
	\begin{equation} \label{e:dense} 
	cl(f^{-1}(-\infty,\alpha) \cap L)=	cl(f^{-1} (\beta, \infty) \cap L)=L. 
	\end{equation}
	$L$ is necessarily infinite 
	(because $f^{-1}(-\infty,\alpha) \cap L$ and $f^{-1} (\beta, \infty) \cap L$ are disjoint). 
	Since $X$ is 
	weakly $\tau$-stable 
	we may choose distinct points 
	$u,v \in L$, $w \in X$ with $w \in [u,v] \setminus \{u,v\}$, and $\tau$-neighborhoods 
	$U$ and $V$ of $u$ and $v$ respectively such that $\lan x,w,y \ran$ 
	for every $x \in U \cap L, y \in V \cap L$.
	
	By Equation \ref{e:dense} there exist $u',u'' \in U \cap L$ and $v', v'' \in V \cap L$ such that 
	\begin{equation} \label{eq:green-red} 
	f(u') <\a, f(v')< \a, \ \ \ \beta < f(u''), \beta < f(v'') 
	\end{equation}
	It is enough to show that $f$ is not $R$-monotone. There are two cases 
	to check:
	
	\sk 
	
	Case 1. $f(w) \leq \beta$. 
	\sk
	Then $\langle u'',w, v'' \rangle$ but $f(w)$ is not between  $f(u''), f(v'')$ in $\R$. 
	
	\sk
	Case 2. $\beta < f(w)$. 
	\sk
	Then $\langle u',w,v' \rangle $ but $f(w)$ is not between   $f(u'), f(v')$ in $\R$.

\end{proof} 

\begin{thm} \label{t:MonotFragm} Let $D$ be a dendron. Every monotone function $f: D \to \R$ is fragmented 
(Baire class 1, when $D$ is a dendrite). 
It follows that $M(D) \subset \F(D)$. 
\end{thm}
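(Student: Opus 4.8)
The plan is to apply Theorem \ref{t:GenFragm} to the compact space $X = D$ equipped with its standard betweenness relation $R = R_B$. To do this I must verify the two hypotheses of that theorem: that $f$ is $R_B$-monotone, and that $R_B$ is weakly $\tau$-stable.

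The first hypothesis is essentially already in hand. By Lemma \ref{l:monotonicities} the notions of C-monotonicity and B-monotonicity coincide on dendrons, so a monotone function $f\colon D \to \R$ lies in $M_B(D)$; that is, it respects $R_B$ and the standard betweenness relation of $\R$, which is exactly the meaning of $f$ being $R_B$-monotone.

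The heart of the argument is the second hypothesis, and here I would in fact establish the stronger property that $R_B$ is $\tau$-stable, from which weak $\tau$-stability is immediate (given any infinite $K \subset D$, pick any two distinct points of $K$ and invoke $\tau$-stability for that pair). Given distinct $u, v \in D$, the defining property of a dendron furnishes a point $w$ separating $u$ and $v$; by the definition of separation there are disjoint open sets $U, V$ with $u \in U$, $v \in V$ and $D \setminus \{w\} = U \cup V$. Since $w \notin U \cup V$, we have $w \in [u,v] \setminus \{u,v\}$. I claim the \emph{same} $U, V$ witness $\tau$-stability: for arbitrary $x \in U$ and $y \in V$, the very decomposition $D \setminus \{w\} = U \cup V$ shows verbatim that $w$ separates $x$ from $y$, hence $w \in [x,y]$, i.e. $\langle x, w, y \rangle$.

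With both facts in place, Theorem \ref{t:GenFragm} yields that $f$ is fragmented, which is exactly the inclusion $M(D) \subseteq \F(D)$. Finally, when $D$ is a dendrite it is compact metrizable, hence Polish, so by the identification $\F(X) = \mathcal{B}_1(X)$ for Polish $X$ recorded above, $f$ is of Baire class $1$. I do not anticipate a serious obstacle: all of the analytic content has been absorbed into Theorem \ref{t:GenFragm} and Lemma \ref{l:monotonicities}, and the only remaining point — the $\tau$-stability of the betweenness relation — drops straight out of the definition of a dendron, because the open sets exhibiting the separation of $u$ and $v$ by $w$ simultaneously exhibit the separation of every $x \in U$ from every $y \in V$.
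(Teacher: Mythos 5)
Your proof is correct and takes essentially the same approach as the paper: the paper's proof likewise establishes the (full) $\tau$-stability of $R_B$ via the very same observation that the decomposition $D \setminus \{w\} = U \cup V$ witnessing the separation of $u$ from $v$ simultaneously separates every $x \in U$ from every $y \in V$, and then tacitly invokes Theorem \ref{t:GenFragm} together with Lemma \ref{l:monotonicities}. Your write-up merely makes explicit what the paper leaves implicit (stability implies weak stability, monotone implies $R_B$-monotone, and $\F(D)=\mathcal{B}_1(D)$ for the metrizable case), which is fine.
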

\begin{proof}
For a dendron $D$ the standard betweenness relation (Definition \ref{d:BrelationD}) is stable. 
Indeed, by the definition of dendrons for every distinct $u \neq v$ we have a separation by a point $w$. So, $D\setminus \{w\}=U \cup V$, where $U$ and $V$ are open disjoint neighborhoods of $u$ and $v$. Hence, $w$ separates any pair $x \in U, y \in V$.
  
	\end{proof}

For dendrites we have  
$M(D) \subset \mathcal{B}_1(D)=\F(D)$.

\begin{cor} \label{c:tameFamily}
	For every dendron $D$ the family $F:=CM(D,[0,1])$ is tame. 
\end{cor}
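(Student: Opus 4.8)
The plan is to show that the pointwise closure of $F$ in the cube $[0,1]^D$ consists entirely of fragmented functions, and then to invoke the standard characterization of tameness for such families.

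First I would recall the bridge between fragmentability and tameness. For a compact space $X$ and a bounded family $\Phi \subset \R^X$, the combination of Rosenthal's $\ell_1$-dichotomy \cite{Ros0} with the Bourgin--Fremlin--Talagrand theorem (see \cite{GM-HNS,GM-survey}) yields that $\Phi$ is tame (contains no independent sequence) as soon as every function lying in the pointwise closure $\overline{\Phi}\subset \R^X$ is fragmented. Indeed, if $\overline{\Phi}\subset \F(X)$ then $\overline{\Phi}$ is a pointwise compact set of fragmented functions, i.e. a Rosenthal compactum; such compacta are Fr\'echet--Urysohn and in particular sequentially compact. An independent sequence, however, admits no pointwise convergent subsequence, so no sequence drawn from $\Phi$ can be independent. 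Thus it suffices to prove that $\overline{F}\subset \F(D)$, the closure being taken in the compact space $[0,1]^D$.

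Next I would control this closure by means of the monotonicity lemmas. By definition $F=CM(D,[0,1]) \subseteq M_C(D,[0,1])$, and by Lemma \ref{l:monotonicities} (applied with $X=D$ and the dendron $Y=[0,1]$) we have $M_C(D,[0,1]) = M_B(D,[0,1])$. By Lemma \ref{l:propM}(4) the set $M_B(D,[0,1])$ is pointwise closed in $[0,1]^D$. Hence
$$
\overline{F} \subseteq M_B(D,[0,1]) = M(D,[0,1]).
$$
In other words, every pointwise limit of continuous monotone functions $D \to [0,1]$ is again monotone, though no longer necessarily continuous.

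Finally I would apply Theorem \ref{t:MonotFragm}: since every monotone real valued function on a dendron is fragmented, and a function $D \to [0,1]$ is in particular a monotone function $D \to \R$, we obtain $M(D,[0,1]) \subset \F(D)$. Combined with the previous inclusion this gives $\overline{F}\subset \F(D)$, and the bridge of the first paragraph completes the argument. I expect the only substantive point to be that first step --- the passage from ``the pointwise closure consists of fragmented functions'' to ``no independent sequence'' --- which rests on the general BFT/Rosenthal machinery rather than on anything specific to dendrons; all the dendron-specific content (pointwise closedness of $M_B$ and fragmentability of monotone functions) is supplied directly by Lemmas \ref{l:propM} and \ref{l:monotonicities} together with Theorem \ref{t:MonotFragm}.
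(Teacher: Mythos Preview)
Your proof is correct and follows essentially the same route as the paper: show that the pointwise closure of $F$ lies in $M(D,[0,1])$ via Lemmas \ref{l:propM}(4) and \ref{l:monotonicities}, apply Theorem \ref{t:MonotFragm} to get $\overline{F}\subset \F(D)$, and then invoke the Rosenthal/BFT machinery to conclude tameness. The paper cites \cite{GM-rose} and \cite[Theorem 2.12]{GM-tame} for that last bridge, whereas you spell out the argument via sequential compactness of Rosenthal compacta, but the substance is identical.
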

\begin{proof} 
By Lemma \ref{l:propM}.4, every function $\varphi: D \to [0,1]$ from the pointwise closure of  
	$F$ in $[c,d]^D$ is a (not necessarily continuous) 
	B-monotone function. 
	By Lemma \ref{l:monotonicities}, 
	$\varphi \in M(D,[0,1])$. By Theorem \ref{t:MonotFragm} we know that $M(D,[0,1]) \subset \F(D)$ 
and we
conclude that $cl_p(F) \subset \F(D)$. This means that $F$ is a Rosenthal family, in terms of \cite{GM-rose}. 
This is the same as saying 
that $F$ does not contain an independent sequence (for a detailed proof see for example \cite[Theorem 2.12]{GM-tame}). 
\end{proof}

%

In \cite[Cor. 2.15]{Mill-Wattel} van Mill and Wattel proved the following remarkable result.
We thank Jan van Mill for providing us 
the short proof below. 
	For dendrites Theorem \ref{r:Mill} is well known, \cite[Theorem 1.2]{Charatoniks}. 

\begin{thm} \label{r:Mill} 
For every dendron $D$ and every subcontinuum $C$ there exists a naturally 
defined continuous retraction $r_C: D \to C$. 
Moreover, this retraction is always monotone.
\end{thm}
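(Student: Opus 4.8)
The plan is to construct the retraction $r_C$ explicitly using the structure of dendrons established in Lemma \ref{l:D-prop}, and then verify continuity and monotonicity separately. First I would define the candidate map pointwise. Given a subcontinuum $C \subseteq D$ and a point $x \in D$, I expect $C \cap [x,c]$ to behave uniformly in $c \in C$, so I would fix any $c_0 \in C$ and consider the intersection $C \cap [x,c_0]$. Since $[x,c_0]$ is the smallest subcontinuum containing $x$ and $c_0$ (Lemma \ref{l:D-prop}.2) and $C$ is a subcontinuum, the intersection $C \cap [x,c_0]$ is again a subcontinuum by Lemma \ref{l:D-prop}.1. The natural guess is that this intersection has a unique point ``closest'' to $x$; concretely I would set $r_C(x)$ to be the unique point $p$ such that $[x,p] \cap C = \{p\}$, i.e. $p$ is the entrance point of the generalized arc from $x$ into $C$. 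The key algebraic fact to check is that $r_C(x)$ is well defined and independent of the auxiliary choice of $c_0$, which should follow from the pretree (median-type) axioms encoded in Lemma \ref{l:D-prop}.3, namely $[a,b] \subseteq [a,c] \cup [c,b]$.

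Next I would verify that $r_C$ is a retraction: if $x \in C$ then $[x,x] \cap C = \{x\}$ forces $r_C(x) = x$, so $r_C$ fixes $C$ pointwise, and by construction $r_C(x) \in C$ always. The substantive analytic step is continuity. Here I would argue by contradiction using a net: suppose $x_i \to x$ but $r_C(x_i) \not\to r_C(x)$. Passing to a subnet, $r_C(x_i) \to q \in C$ (using compactness of $C$) with $q \neq r_C(x)$. The defining property of the retraction gives $\langle x_i, r_C(x_i), c \rangle$ for every $c \in C$, and since the betweenness relation $R_B$ is closed by Proposition \ref{p:Monod}, passing to the limit yields $\langle x, q, c \rangle$ for all $c \in C$; combined with $q \in C$ this should force $q = r_C(x)$ by the uniqueness characterization of the entrance point, giving the contradiction.

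Finally, for monotonicity I would invoke Lemma \ref{l:monotonicities}, which identifies $M_C$ with $M_B$ on dendrons, so it suffices to show $r_C \in M_B(D, C)$, i.e. that $\langle u, w, v \rangle$ implies $\langle r_C(u), r_C(w), r_C(v) \rangle$. This reduces to a purely combinatorial statement about the betweenness relation: the entrance points into a fixed subtree $C$ respect betweenness. I would prove this using the arc-containment axiom Lemma \ref{l:D-prop}.3 repeatedly, showing that the arc $[r_C(u), r_C(v)]$ lies in $C$ and must pass through the entrance point $r_C(w)$ of $w$.

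The step I expect to be the main obstacle is the well-definedness and uniqueness of the entrance point $r_C(x)$, together with its characterization $\langle x, r_C(x), c \rangle$ for all $c \in C$; everything else (retraction property, continuity via the closedness of $R_B$, and monotonicity via the pretree axiom) follows fairly cleanly once this characterization is in hand. Showing the entrance point exists requires knowing that $C \cap [x,c_0]$ is a well-behaved (closed, connected) set whose ``end'' nearest $x$ is a single point, which is exactly where the dendron hypothesis — as opposed to a general continuum — does the real work.
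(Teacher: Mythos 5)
Your proposal is correct in substance, and your candidate map coincides with the paper's: the formula $r_C(x)=\bigcap\{[x,c]\cap C:\ c\in C\}$ is exactly your ``entrance point'' (your characterization $[x,p]\cap C=\{p\}$ is equivalent, and uniqueness follows from axiom (A2) of Lemma \ref{l:PropPretree}, or from a median argument using convexity of $C$). But the two arguments then diverge. The paper does not reprove existence and continuity at all --- it delegates them to \cite[Cor.\ 2.15]{Mill-Wattel} --- whereas you derive continuity self-containedly from the closedness of $R_B$ (Proposition \ref{p:Monod}) via the gate characterization $\langle x, r_C(x), c\rangle$ for all $c\in C$; that net argument is sound and is a nice by-product of machinery already in the paper. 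For monotonicity the routes are genuinely different: the paper proves \emph{C-monotonicity} by a one-line fiber computation (if $p\in[x,y]$ with $y=r_C(x)$, then $r_C(p)\in [p,y]\cap C=\{y\}$, so each fiber $r_C^{-1}(y)$ is connected) and then invokes Kuratowski's theorem \cite[Section 46]{Kur} that a continuous closed map with connected point-inverses is monotone, finally transferring to B-monotonicity by Lemma \ref{l:monotonicities}; you travel the same bridge in the opposite direction, proving \emph{B-monotonicity} directly and deducing C-monotonicity from the lemma. The trade-off: the paper's route piggybacks on continuity and closedness of $r_C$, so essentially no combinatorics is needed; your route requires the fact that the gate map preserves betweenness ($w\in[u,v]\Rightarrow r_C(w)\in[r_C(u),r_C(v)]$), which is true but is where the real work lies --- more, I would say, than in the well-definedness step you single out as the main obstacle --- and your one-sentence sketch of it would need to be fleshed out, e.g.\ via the median identities used for the interval retractions in Proposition \ref{p:mRetr} (gate maps onto convex subsets of median pretrees are median-preserving, hence monotone by Remark \ref{r:med}.2). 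In compensation, your argument never uses continuity of $r_C$, so it adapts verbatim to the abstract median pretree setting of Section \ref{s:pretrees}, where the paper's Kuratowski-based route is unavailable.
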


\begin{proof}   
The map $r_C: D \to C$ is defined by 
$$
r_C(x)= \cap \{[x,c] \cap C: \ c \in C\}. 
$$
	 We discuss only the monotonicity of $r_C$. Other details see in \cite[Cor. 2.15]{Mill-Wattel}. 
Let $x \in D$ and $y=r_C(x)$. 
If $p \in [x,y]$ then $[p,y]$ is contained in $[x,y]$. Since $y\in C$, 
the formula for $r_C(p)$ gives us that $r_C(p) \in [p,y] \cap C = {y}$, hence $r_C(p) =y$. 
Thus all the points in the fiber of the point $y\in C$ can be connected to $y$ by a continuum in the fiber. 
So, $r_C^{-1}(y)$ is connected for every $y \in C$. Since $r_C: D \to C$ is a continuous closed map then 
by Section 46, subsection I, Theorem 9 in \cite{Kur} the map $r_C$ is C-monotone (Definition \ref{d:monot2}). 
The subcontinuum $C$ is also a dendron. So, $r_C$ is also $B$-monotone (Lemma \ref{l:monotonicities}). 
\end{proof}

This result leads to the following 
  
\begin{lem} \label{l:retracts}
	On every dendron $D$ and 
every pair $u,v$ of distinct points in $D$ there exists a continuous 
monotone 
function $f: D \to [0,1]$ such that $f(u)=0, f(v)=1$. 
\end{lem}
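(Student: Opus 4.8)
The plan is to reduce the construction of a separating monotone function on a dendron to the previously established retraction result, Theorem \ref{r:Mill}. Given two distinct points $u,v \in D$, I would first form the generalized arc $C:=[u,v]$, which by Lemma \ref{l:D-prop}.2 is a subcontinuum of $D$ containing $u$ and $v$. Since $C$ is a subcontinuum, Theorem \ref{r:Mill} supplies a continuous monotone retraction $r_C: D \to C$. The idea is then to compose $r_C$ with a suitable continuous monotone function $g: C \to [0,1]$ sending $u$ to $0$ and $v$ to $1$; by Lemma \ref{l:propM}.1 the composition $f:=g \circ r_C$ will again be continuous and monotone, and since $r_C$ fixes $u$ and $v$ (they lie in $C$), we will have $f(u)=g(u)=0$ and $f(v)=g(v)=1$, as required.

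The remaining task is therefore to produce the function $g$ on the arc $C=[u,v]$ itself. Here I would exploit the fact, noted early in the excerpt, that $C$ carries the structure of a generalized arc and in fact behaves like a linearly ordered continuum: the betweenness relation on $C$ induces a linear order with $u$ and $v$ as endpoints. The natural candidate for $g$ is something like a separation-based order function, for instance $g(x)$ measuring the ``position'' of $x$ between $u$ and $v$. The cleanest way to obtain such a $g$ is to apply the separation property of the dendron once more: since $C$ is itself a dendron (indeed an arc), and $u \neq v$, one can again invoke the defining property to find separating points, but more directly I would use that an arc is order-isomorphic to a compact linearly ordered space and define $g$ as a monotone continuous map of that order onto $[0,1]$.

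I expect the main obstacle to be the construction and verification of $g$ on $C$, since $C$ need not be a metrizable arc (in a general dendron $[u,v]$ may be any connected linearly ordered compactum, not a real interval, as the excerpt explicitly warns). Thus $g$ cannot simply be taken to be an affine parametrization by arc length; instead one must argue order-theoretically, producing a continuous order-preserving surjection from a connected linearly ordered compact space onto $[0,1]$ that separates the two endpoints. One route is to pick an interior separating point $w \in (u,v)$ and build $g$ using the indicator-like structure of the two sides, but continuity forces a more careful inductive or limiting construction. Alternatively, and more economically, I would bypass a bespoke construction by applying the already-available retraction machinery together with the order structure of arcs to deduce existence of such a separating monotone $g$ abstractly, relying on the identification $M_C = M_B$ from Lemma \ref{l:monotonicities} to check monotonicity via the betweenness relation rather than via connectedness of preimages directly.

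<br>

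In summary, the forward-looking strategy is: retract $D$ onto the arc $[u,v]$ via Theorem \ref{r:Mill}, handle the arc by an order-theoretic separating function, and glue by composition using Lemma \ref{l:propM}.1, with Lemma \ref{l:monotonicities} available to translate between the two notions of monotonicity whenever that simplifies a verification.
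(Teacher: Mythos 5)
Your proposal follows the paper's proof in essentially every detail: retract $D$ onto the generalized arc $[u,v]$ via Theorem \ref{r:Mill}, exploit that $[u,v]$ is a linearly ordered compact connected space, obtain a continuous order-preserving (hence monotone) map $[u,v] \to [0,1]$ separating the endpoints, and compose using Lemma \ref{l:propM}.1. The single step you leave abstract --- the existence of such a separating continuous order-preserving map on a possibly non-metrizable arc, which you correctly flag as the crux --- is settled in the paper by citing Nachbin \cite[p.~48 and 113]{Nach}, exactly the order-theoretic existence statement you anticipated.
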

\begin{proof} By Theorem \ref{r:Mill} 
	for every pair of distinct points $u,v \in D$ 
	we have a monotone continuous retraction $r_{[u,v]}: D \to [u,v]$. 
	Now recall that the ``generalized arc" $[u,v]$ is a linearly ordered compact 
	connected space, \cite{Mill-Wattel}. 
	 By results of Nachbin \cite[p. 48 and 113]{Nach} we have an 
	 order-preserving (hence, monotone in the sense of Definition \ref{d:monot2}.1) continuous map 
$h : [u, v] \to [0,1]$.
	The  composition $f =  h \circ r_{[u,v]}$ 	
	 is the required continuous monotone map $D \to [0,1]$ which separates $u$ and $v$. 
\end{proof}


\begin{thm} \label{t:Dendron} 
	Let $D$ be a dendron. For every topological group $G$ and continuous action 
	$G \curvearrowright D$, the dynamical $G$-system $D$ is Rosenthal representable (hence, also tame). 
	It follows that the topological group $H(D)$ is Rosenthal representable.  
\end{thm}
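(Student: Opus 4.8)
The plan is to apply the representability criterion of Theorem \ref{t:criter1} to the explicit family
$$
F := CM(D,[0,1])
$$
of all continuous monotone functions $D \to [0,1]$, verifying that it is bounded, $G$-invariant, point-separating, and tame as a family of functions.

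Boundedness is immediate, as every member takes values in $[0,1]$. For $G$-invariance, I would use that a continuous action of a topological group is an action by homeomorphisms: for $g \in G$ and $f \in F$ the translate $fg$, given by $(fg)(x)=f(gx)$, is continuous, $[0,1]$-valued, and monotone by Lemma \ref{l:propM}.2, so $fG \subseteq F$. Point-separation is precisely Lemma \ref{l:retracts}: for distinct $u,v \in D$ there is $f \in F$ with $f(u)=0$ and $f(v)=1$. Tameness of $F$ is exactly Corollary \ref{c:tameFamily}. With all four hypotheses in hand, Theorem \ref{t:criter1} gives that the $G$-system $D$ is Rosenthal representable, and the tameness conclusion follows since every Rosenthal representable system is tame.

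For the concluding assertion I would specialize to $G = H(D)$, the full homeomorphism group of $D$ acting tautologically. The action is then Rosenthal representable by the first part, and it is topologically faithful since the canonical map $H(D) \to \Homeo(D)$ is the identity. A topologically faithful Rosenthal representable action yields a faithful co-representation of the acting group on the same Rosenthal Banach space: the injectivity and the embedding property of the co-homomorphism $h$ follow from those of $\alpha$ together with the faithfulness of the action. Hence $H(D)$ is itself a Rosenthal representable topological group.

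I expect no serious obstacle inside this argument, since the substantial work is already carried out in the preparatory results: the tameness of $F$ rests on the fragmentability of monotone functions (Theorem \ref{t:MonotFragm}) combined with the identification of the pointwise closure of $F$ with B-monotone functions (Lemmas \ref{l:propM}.4 and \ref{l:monotonicities}), while the supply of separating functions rests on the van Mill--Wattel retraction (Theorem \ref{r:Mill}) and Nachbin's theorem. Relative to these, the present proof is essentially the bookkeeping needed to match the hypotheses of Theorem \ref{t:criter1}; the only step demanding genuine care is the passage from representability of the action to representability of $H(D)$ as a group, which relies on the faithfulness of the tautological action.
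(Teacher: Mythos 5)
Your proposal is correct and takes essentially the same approach as the paper's own proof: both verify the four hypotheses of Theorem \ref{t:criter1} for the family $F=CM(D,[0,1])$ via Lemma \ref{l:propM}.2 ($G$-invariance), Lemma \ref{l:retracts} (point-separation) and Corollary \ref{c:tameFamily} (tameness). The only difference is that you sketch the final passage from the topologically faithful representable action to representability of the group $H(D)$, whereas the paper dispatches it by citing \cite[Lemma 3.5]{GM-tame}.
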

\begin{proof} 
	 By Lemma \ref{l:propM}.2 we have $fg \in CM(D,[0,1])$ 
	for every $g \in G$ and every $f \in CM(D,[0,1])$. So, if $F:=CM(D,[0,1])$ then $FG=F$ is a $G$-invariant bounded family of continuous functions.
	By Corollary \ref{c:tameFamily}, $F$ is a tame family. 
	 By Lemma \ref{l:retracts}, $CM(D,[0,1])$ separates the points of $D$. 
	So one may apply Theorem \ref{t:criter1} and we obtain that the dynamical $G$-system $D$ is Rosenthal representable.  
	
	Finally note that it is straightforward to see that Rosenthal representability of any compact dynamical $H(K)$-system $K$ implies that the topological group $H(K)$ is Rosenthal representable (for details see for example \cite[Lemma 3.5]{GM-tame}).  

\end{proof}


\begin{thm} \label{t:MonotFr} 
	Every monotone map $f: D_1 \to D_2$ between dendrons is fragmented (Baire class 1 map, if $D_1,D_2$ are dendrites). 
\end{thm}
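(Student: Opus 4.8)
The plan is to reduce the map-valued statement to the scalar statement already proved in Theorem \ref{t:MonotFragm}, using the rich supply of continuous monotone real-valued functions on a dendron established in Lemma \ref{l:retracts}. The key observation is that fragmentability of a map $f\colon D_1 \to D_2$ into a compact space can be detected by composing with a point-separating family of continuous functions on the target, and that post-composition with a continuous monotone function preserves monotonicity by Lemma \ref{l:propM}.1.

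Concretely, I would argue as follows. Let $f \in M(D_1,D_2)$. For every pair of distinct points $u,v \in D_2$, Lemma \ref{l:retracts} supplies a continuous monotone function $h_{u,v}\colon D_2 \to [0,1]$ with $h_{u,v}(u)=0$, $h_{u,v}(v)=1$; in particular these functions separate the points of $D_2$. Fix one such $h\colon D_2 \to [0,1]$. Since $h$ is continuous and monotone and $f$ is monotone, the composition $h\circ f\colon D_1 \to [0,1]$ is again monotone by Lemma \ref{l:propM}.1 (composition of monotone functions is monotone), and it is $\R$-valued. By Theorem \ref{t:MonotFragm}, every monotone real-valued function on the dendron $D_1$ is fragmented, so $h\circ f \in \F(D_1)$. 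Thus $\ell \circ f$ is fragmented for every $\ell$ in the point-separating family $\{h_{u,v}\}$ of continuous functions on $D_2$.

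The remaining step is to pass from ``$\ell\circ f$ is fragmented for every $\ell$ in a point-separating family of continuous functions on the compact target'' to ``$f$ itself is fragmented.'' This is the standard characterization of fragmented maps into compact spaces: using the PCP formulation, for any closed nonempty $Y \subseteq D_1$ one wants a point of continuity of $f|_Y$, and this can be obtained because the topology of the compact metrizable-or-not target $D_2$ is initial with respect to a point-separating family of continuous functions, so a common point of continuity of the $\ell\circ f|_Y$ (found via a Baire-category / countable-family argument when $D_2$ is second countable, or via the general fragmentability machinery in \cite{GM-HNS,GM-rose}) is a point of continuity of $f|_Y$. In the dendrite case $D_1,D_2$ are metrizable, so $\F = \mathcal{B}_1$ and the conclusion upgrades to Baire class 1.

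I expect the main obstacle to be precisely this last reduction, since fragmentability of the vector of compositions $(\ell\circ f)_\ell$ must be converted into fragmentability of $f$ into $D_2$; the delicate point is that the separating family $\{h_{u,v}\}$ is uncountable, so one cannot naively intersect countably many residual sets of continuity points. The clean way around this is to invoke the general fact (from the cited fragmentability theory) that a map into a compact space whose compositions with a point-separating family of continuous functions are all fragmented is itself fragmented, together with regularity of dendrons which gives the target a well-behaved (indeed rim-finite, hence topologically tame) structure; for dendrites one simply uses a countable separating subfamily. Everything else is routine composition and citation.
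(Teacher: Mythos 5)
Your proof is correct and is essentially identical to the paper's: the paper also composes $f$ with the point-separating family of continuous monotone functions from Lemma \ref{l:retracts}, applies Theorem \ref{t:MonotFragm} to conclude each composition is fragmented, and then cites \cite[Lemma 2.3.3]{GM-rose} for exactly the reduction you worried about (fragmentability of a map into a compact space follows from fragmentability of its compositions with a point-separating family of continuous functions, with no countability hypothesis needed). Your concern about the uncountable family is thus resolved by the very lemma the paper invokes, since fragmentability is an $\varepsilon$-local condition checked one entourage (hence finitely many separating functions) at a time.
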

\begin{proof}
	By Lemma \ref{l:retracts} there exists a family of continuous monotone maps $\{q_i: D_2 \to [0,1]: i \in I\}$ which separates the points of $D_2$. Then any composition  $q_i \circ f: D_1 \to [0,1]$ is monotone, hence fragmented by Theorem \ref{t:MonotFragm}. Now, using \cite[lemma 2.3.3]{GM-rose} we obtain that the original map $f: D_1 \to D_2$ is also fragmented.  
	\end{proof}

The tameness of any continuous group actions on dendrons 
$G \curvearrowright D$ can be derived 
from Theorem \ref{t:MonotFr} using the following 

\begin{cor} \label{c:fragm} 
Let $G$ act on a dendron $D$ then every element of the enveloping semigroup $E(G, D)$ is
fragmented, hence the system $(G, D)$ is tame.
\end{cor}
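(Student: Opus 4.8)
The plan is to deduce this corollary directly from the fragmentedness of monotone maps established in Theorem \ref{t:MonotFr}, by identifying each element of the enveloping semigroup as a monotone self-map of $D$ and then invoking the standard enveloping-semigroup characterization of tameness. First I would recall that $E(G,D)$ is the pointwise closure in $D^D$ of the set of translations $\{g\colon D\to D\}_{g\in G}$, and that each such translation $g$ is a homeomorphism, hence trivially monotone (it preserves the betweenness relation $R_B$ since it is a homeomorphism respecting separation of points). Thus the translations all lie in $M(D,D)=M_B(D,D)$.

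The key step is then to show that the whole enveloping semigroup is contained in $M(D,D)$, i.e.\ that the pointwise limit of (not necessarily continuous) $B$-monotone self-maps of $D$ is again $B$-monotone. This is exactly the content of Lemma \ref{l:propM}.3, which asserts that $M_B(X,D)$ is pointwise closed in $D^D$; the underlying reason is Proposition \ref{p:Monod}, that the betweenness relation $R_B$ on a dendron is closed, so the condition $\langle p(u),p(w),p(v)\rangle$ passes to pointwise limits. Applying this with $X=D$ shows every $p\in E(G,D)$ satisfies $p\in M_B(D,D)=M(D,D)$. Note that we do not need continuity of $p$ here, which is fortunate since elements of the enveloping semigroup are generally discontinuous; the monotonicity notion was set up precisely to survive pointwise limits without any continuity hypothesis.

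Having placed each $p\in E(G,D)$ in $M(D,D)$, I would apply Theorem \ref{t:MonotFr} to conclude that every such $p\colon D\to D$ is a fragmented map. This gives the first assertion of the corollary. For the tameness conclusion, I would invoke the characterization that a compact dynamical system is tame precisely when every element of its enveloping semigroup is a fragmented map of $X$ into itself; this is the fragmentability reformulation of the Baire-class-$1$ criterion recalled in the introduction (see the discussion around \cite{GMU,GM-survey} and the equivalence in \cite{GM-HNS,GM-rose}). Since we have just shown that every $p\in E(G,D)$ is fragmented, the system $(G,D)$ is tame.

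The main obstacle, such as it is, lies not in the logical chain but in making sure the monotonicity of a pointwise limit of homeomorphisms is legitimately $B$-monotone rather than merely $C$-monotone: a pointwise limit of continuous maps need not be continuous, so the two notions of monotonicity ($M_B$ and $M_C$) could diverge for the limit, and Lemma \ref{l:monotonicities} only equates them for maps between dendrons without a continuity assumption, which is exactly what we have available. I expect the one genuinely substantive input to be the closedness of $R_B$ (Proposition \ref{p:Monod}), which is what guarantees that $M_B(D,D)$ is pointwise closed and hence that the enveloping semigroup stays inside the class of fragmented maps; everything else is a routine assembly of the cited results.
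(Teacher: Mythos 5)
Your proposal is correct and is essentially identical to the paper's own proof: the paper likewise uses Lemma \ref{l:propM}.3 (which rests on the closedness of $R_B$, Proposition \ref{p:Monod}) to place every $p \in E(G,D)$ in $M(D,D)$, then applies Theorem \ref{t:MonotFr} to get fragmentedness, and concludes tameness via the enveloping semigroup characterization from \cite{GM-rose}. Your additional care about $B$- versus $C$-monotonicity surviving pointwise limits without continuity is a sound elaboration of the same argument, not a different route.
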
 
\begin{proof}
Using Lemma \ref{l:propM}.3 we obtain that  
every element $p \in E(G, D)$ is a monotone map $p: D \to D$. 
By Theorem \ref{t:MonotFr}, $p$ is a fragmented map and it follows that
the $G$-system $D$ is tame by the enveloping semigroup characterization of
tameness, \cite{GM-rose}.  
\end{proof}

Theorem \ref{t:Dendron} implies also the following purely topological nontrivial fact. 

\begin{cor} \label{c:DisWRN} 
	Every dendron $D$, as a topological space, is Rosenthal representable. That is, $D$ is WRN. 
\end{cor}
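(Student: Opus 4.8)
The plan is to observe that, for a compact space $D$ with the trivial group action, Rosenthal representability of the dynamical system coincides with the purely topological WRN property, exactly as recorded in the discussion following Definition \ref{d:repr}. Hence the corollary should fall out of Theorem \ref{t:Dendron} by merely specializing the acting group, with no new work required.

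Concretely, I would take $G$ to be the trivial group $\{e\}$ (or any topological group acting on $D$ through the identity homeomorphism), so that $G \curvearrowright D$ is the trivial action. This is certainly a continuous action, so Theorem \ref{t:Dendron} applies and produces a representation $(h,\a)$ of $G \curvearrowright D$ on a Rosenthal Banach space $V$ in which $\a \colon D \to V^*$ is a topological embedding into the weak-star topology. For the trivial action the co-homomorphism $h$ carries no additional information, so the surviving content is precisely the existence of such an embedding, which is by definition the assertion that $D$ is WRN.

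Alternatively, one can bypass the dynamical language and invoke Theorem \ref{t:criter1} directly. For the trivial group every bounded family of continuous functions is automatically $G$-invariant, so it suffices to exhibit a point-separating bounded tame family; the family $F := CM(D,[0,1])$ serves, being tame by Corollary \ref{c:tameFamily} and point-separating by Lemma \ref{l:retracts}. Either route gives the same conclusion.

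There is essentially no obstacle here, since all the substantive analysis was carried out earlier in Corollary \ref{c:tameFamily} and Lemma \ref{l:retracts}. The only point deserving a word of care is the bookkeeping identification of the trivial-action version of Rosenthal representability with the topological WRN notion, and this has already been made explicit in the paper.
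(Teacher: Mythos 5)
Your proposal is correct and matches the paper's own (implicit) argument: the corollary is stated there as an immediate consequence of Theorem \ref{t:Dendron} via the trivial action, using exactly the identification of trivial-action Rosenthal representability with the topological WRN property noted after Definition \ref{d:repr}. Your alternative route through Theorem \ref{t:criter1} with $F = CM(D,[0,1])$ is just an unwinding of the proof of Theorem \ref{t:Dendron} in this special case, so no genuinely new approach is involved.
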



As a related result note that by \cite[Theorem 6.6]{Mill-Wattel} 
a Hausdorff space can be embedded in a dendron if and only if it possesses a \textit{cross-free} 
(see \cite{Mill-Wattel} for definitions) 
closed subbase. 

\begin{remark} \label{r:semigr} 
%
%
%
Theorem \ref{t:Dendron} remains true also for continuous \textit{monotone} monoid actions 
$S$ on $D$
such 
that for all $s \in S$ the corresponding $s$-translation $D \to D$ is monotone. 
Clearly continuous group actions on dendrons are always monotone.   
%
\end{remark}

\sk
 
\section{Monotone actions on median pretrees} 
\label{s:pretrees} 

In this section we consider actions on a pretree; 
a useful treelike structure that naturally generalizes several important structures
 including linear orders and the betweenness relation on dendrons. 
By a \textit{pretree} (see for example \cite{B,Mal-14}) we mean a pair $(X,R)$, where 
$X$ is a set and $R$ is a ternary relation on $X$
(we write $\lan a,b,c \ran$ to denote $(a,b,c) \in R$) satisfying the following three axioms:
\begin{itemize}
	\item [(B1)] $\lan a,b,c \ran \Rightarrow \lan c,b,a \ran$. 
	\item [(B2)] $\lan a,b,c \ran \wedge \lan a,c,b \ran \Leftrightarrow b=c$.
	\item [(B3)] $\lan a,b,c \ran \Rightarrow \lan a,b,d \ran  \vee \lan d,b,c \ran$.
\end{itemize} 
In \cite{AN} such a ternary relation is called a \textit{B-relation}. 

It is convenient to use also an interval approach. For every $u,v \in X$ define 
$$
[u,v]:=\{x \in X: \lan u,x,v \ran\}
$$ 

In the list of properties below the first four conditions (A0),(A1),(A2),(A3), 
as a system of axioms, is equivalent to the above definition via (B1), (B2), (B3)
(see \cite{Mal-14}).  

\begin{lem} \label{l:PropPretree} \ 
	In every pretree $(X,R)$ we have 
	\begin{itemize}
	\item [(A0)] $[a,b] \supseteq \{a,b\}$. 
	
	\item [(A1)] $[a,b]=[b,a]$. 
		\item [(A2)] If $c \in [a,b]$ and $b \in [a,c]$ then $b=c$.  
		
		\item [(A3)]  $[a,b] \subseteq [a,c] \cup [c,b]$ for every $a,b,c \in X$.

	\item [(A4)] $[a,b] = [a,c] \cup [c,b]$ for every $a,b \in X, c \in [a,b]$. 
	\item [(A5)]  If $b \in [a,c]$ and $c \in [a,d]$ then $c \in [b,d]$. 

	\end{itemize}  
\end{lem}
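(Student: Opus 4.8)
The plan is to argue entirely from the three axioms (B1)--(B3), translated into the interval language $[u,v]=\{x:\langle u,x,v\rangle\}$, and to observe that (A0)--(A3) are little more than restatements of the axioms, so that the real work lies in (A4) and (A5). First I would record the degenerate (endpoint) instances. Taking $b=c$ in the $(\Leftarrow)$ direction of (B2) forces $\langle a,b,b\rangle$ for all $a,b$, i.e. $b\in[a,b]$; applying (B1) to $\langle a,b,b\rangle$ then yields $\langle b,b,a\rangle$, which after relabelling is exactly $\langle a,a,b\rangle$, i.e. $a\in[a,b]$. This gives (A0). Next, (B1) is symmetric under $(a,b,c)\mapsto(c,b,a)$ and hence an equivalence, so $[a,b]=[b,a]$, which is (A1). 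Statement (A2) is literally the $(\Rightarrow)$ direction of (B2), and (A3) is (B3) rewritten with the inserted point $c$ in place of $d$: applying (B3) to $\langle a,x,b\rangle$ gives $\langle a,x,c\rangle\vee\langle c,x,b\rangle$, i.e. $x\in[a,c]\cup[c,b]$.

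For (A5) I would use a single split by (B3) followed by a (B2)-collapse. Given $b\in[a,c]$ and $c\in[a,d]$, apply (B3) to $\langle a,c,d\rangle$ with inserted point $b$ to get $\langle a,c,b\rangle\vee\langle b,c,d\rangle$. In the second case $c\in[b,d]$ and we are done; in the first case $\langle a,c,b\rangle$ together with the hypothesis $\langle a,b,c\rangle$ triggers (B2) and forces $b=c$, whence $\langle b,c,d\rangle=\langle b,b,d\rangle$ holds by the endpoint fact underlying (A0).

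The main obstacle is the reverse inclusion of (A4), namely showing $[a,c]\subseteq[a,b]$ whenever $c\in[a,b]$ (the inclusion $\subseteq$ in (A4) being just (A3), and $[c,b]\subseteq[a,b]$ following by the symmetry (A1)). Here I would fix $x\in[a,c]$, so $\langle a,x,c\rangle$, and combine it with $\langle a,c,b\rangle$. First, (A5) applied to $x\in[a,c]$ and $c\in[a,b]$ yields $c\in[x,b]$. Then (B3) applied to $\langle a,x,c\rangle$ with inserted point $b$ gives the dichotomy $\langle a,x,b\rangle\vee\langle b,x,c\rangle$: the first alternative is exactly $x\in[a,b]$, as wanted. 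In the second alternative $x\in[b,c]$, while the previous step gives $c\in[b,x]$ (using (A1)); feeding these two facts into (A2) forces $x=c$, and then $x=c\in[a,b]$ by hypothesis. Either way $x\in[a,b]$, which completes (A4) and hence the Lemma. The only delicate point is the bookkeeping of the argument order: (A5) must be in hand before the reverse inclusion of (A4), and the endpoint facts $\langle a,b,b\rangle$ and $\langle a,a,b\rangle$ must be established before (A5), since both collapsing steps rely on them.
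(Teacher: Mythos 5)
Your proof is correct. There is nothing in the paper to compare it against step by step: the paper states the lemma without proof, remarking only that (A0)--(A3), as an axiom system, is equivalent to (B1)--(B3) and deferring to \cite{Mal-14}, with (A4) and (A5) left unverified. Your argument supplies exactly the missing derivation, and I checked the delicate points. The endpoint facts are right: $b=c$ in the $(\Leftarrow)$ direction of (B2) gives $\langle a,b,b\rangle$, and (B1) plus relabelling gives $\langle a,a,b\rangle$, so (A0) holds with the interval defined purely by the relation. Your (A5) is a clean one-split argument: (B3) applied to $\langle a,c,d\rangle$ with inserted point $b$ yields $\langle a,c,b\rangle\vee\langle b,c,d\rangle$, and in the first branch (B2) with the hypothesis $\langle a,b,c\rangle$ collapses $b=c$, after which $\langle b,b,d\rangle$ closes the case. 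For the reverse inclusion in (A4), the pattern (A5) first ($x\in[a,c]$, $c\in[a,b]$ give $c\in[x,b]$), then (B3) on $\langle a,x,c\rangle$ with inserted point $b$, then (A2) applied to $x\in[b,c]$ and $c\in[b,x]$ to force $x=c$ in the bad branch, is valid, and the symmetry reduction via (A1) correctly handles $[c,b]\subseteq[a,b]$. Your closing remark about the order of dependencies --- endpoints before (A5), and (A5) before the reverse inclusion of (A4) --- is accurate and worth keeping, since it is the only nonobvious structural feature of the argument.
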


Following \cite{Mal-14} we define the so-called \textit{shadow topology} $\tau_s$ on $(X,R)$. 
Given an ordered pair $(u,v) \in X^2, u \neq v$,
let 
$$
S^v_u:=\{x \in X: u \in [x,v]\}
$$
be the \textit{shadow} in $X$ defined by the ordered pair $(u,v)$. 
Pictorially, the shadow $S^v_u$ is 
cast
by a point $u$ when the light source is located at the point $v$. 
The family $\mathcal{S} = \{S^v_u: u,v \in X, u \neq v\}$ is a subbase for the closed sets
of the topology $\tau_s$. 

 In the case of a linearly ordered set we get the interval topology. 
In general, for an abstract pretree the shadow topology is 
often 
(but not always) Hausdorff. 
Furthermore, by \cite[Theorem 7.3]{Mal-14} a pretree equipped with its shadow topology is Hausdorff 
if and only if, as a topological space, it can be embedded into a dendron. 
So, by Corollary \ref{c:DisWRN} we can deduce the following: 
\begin{cor}
Every Hausdorff pretree (e.g., linearly ordered topological space) is a WRN topological space. 
\end{cor}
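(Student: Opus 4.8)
The plan is to reduce the assertion to the fact, already recorded in Corollary \ref{c:DisWRN}, that every dendron is WRN, together with the structural theorem of Malyutin quoted just above the statement. The strategy has three moves: embed the Hausdorff pretree topologically into a dendron, invoke the dendron case, and then restrict the resulting representation to the subspace.

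First I would use \cite[Theorem 7.3]{Mal-14}: a pretree $(X,R)$ carrying its shadow topology $\tau_s$ is Hausdorff precisely when it admits a topological embedding into some dendron $D$. Since our $X$ is assumed Hausdorff (this covers, in particular, every linearly ordered topological space, whose shadow topology is the interval topology), I fix such an embedding $j \colon X \hookrightarrow D$, a homeomorphism onto its image $j(X) \subseteq D$.

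Next, by Corollary \ref{c:DisWRN} the dendron $D$ is WRN, so in the sense of Definition \ref{d:repr} (with the trivial action) there are a Rosenthal Banach space $V$ and a bounded, weak-star continuous topological embedding $\alpha \colon D \to V^*$. I would then form the composite $\alpha \circ j \colon X \to V^*$. Boundedness and weak-star continuity are inherited when $\alpha$ is restricted to the subspace $j(X)$ and precomposed with the homeomorphism $j$ onto its image; moreover $\alpha \circ j$ is a topological embedding, being a composition of two topological embeddings. As $V$ is Rosenthal, this exhibits $X$ as a WRN space.

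The entire content of the proof is carried by the two external inputs, namely the dendron case of Corollary \ref{c:DisWRN} (resting ultimately on Theorem \ref{t:Dendron}) and the embedding theorem of \cite{Mal-14}, so there is no genuine difficulty internal to this argument. The only point worth stating explicitly is the hereditary nature of the WRN property: the restriction of a bounded weak-star continuous topological embedding to an arbitrary subspace, not assumed closed or compact, is again of the same type, which is immediate from the definition.
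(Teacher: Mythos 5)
Your argument is correct and coincides with the paper's own (implicit) proof: the corollary is stated there as an immediate consequence of Malyutin's embedding theorem \cite[Theorem 7.3]{Mal-14} together with Corollary \ref{c:DisWRN}, which are exactly the two inputs you use. Your explicit verification that the WRN property passes to arbitrary subspaces (restricting the weak-star continuous bounded embedding $\alpha$ along the embedding $j$), which the paper leaves unstated, correctly supplies the only step the deduction needs.
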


\sk
 
For every triple $a,b,c$ in a pretree $X$ the {\it median} $m(a,b,c)$ is the intersection 
$$
m(a,b,c):=[a,b] \cap [a,c] \cap [b,c]. 
$$
When it is nonempty the median is a singleton (see for example, \cite[p. 14]{B}). 
A pretree $(X,R)$ for which this intersection is always nonempty is called a \textit{median pretree}. 


A {\it Median algebra} (see for example \cite[p. 14]{B} or, \cite{Vel}) is a pair $(X,m)$, 
where the function $m: X^3 \to X$ satisfies the following three axioms:
\begin{itemize}
	\item [(M1)]  $m(x,x,y)=x$.  
	\item [(M2)]  $m(x,y,z)=m(x,y,z)=m(y,z,x)$.
	\item [(M3)]  $m(m(x,y,z),u,v)=m(x,m(y,u,v),m(z,u,v)).$
\end{itemize}

\begin{remark} \label{r:med} \ 
	\ben 
	\item Every median pretree is a \textit{median algebra}.  
	\item A map $f: X_1 \to X_2$ between two median algebras is monotone (i.e., interval preserving) 
	if and only if $f$ is median-preserving (\cite[page 120]{Vel}) if and only if $f$ is convex (\cite[page 123]{Vel}). Convexity of $f$ means that the preimage of a convex subset is convex. 

	\item Every median pretree is Hausdorff 
	in its shadow topology (\cite[Theorem 7.3]{Mal-14}). 
	\een 
\end{remark}

A \textit{compact (median) pretree} is a (median) pretree $(X,R)$ for which the shadow topology  $\tau_s$ is compact.


\begin{examples} \label{ex:median} \ 
	\ben 
	\item 
	Every dendron $D$ is a compact median pretree with respect to the standard betweenness relation $R_B$. 
		Its shadow topology is just the given compact Hausdorff topology on $D$ (see \cite{Mal-14,Mill-Wattel}). 
	
	
	\item Every linearly ordered set is a median pretree. Its shadow topology is just the interval topology of the order. 
	\item Let $X$ be a \textit{$\Z$-tree} (a median pretree with finite intervals $[u,v]$). 
	Denote by $Ends(X)$  the set of all its ends. 
	According to \cite[Section 12]{Mal-14} the set $X \cup Ends(X)$ carries a natural 
	$\tau_s$-\textit{compact} median pretree structure.
	\een
\end{examples}

\begin{prop} \label{p:mRetr} 
	Let $(X,R)$ be a median pretree. Then the retraction map 
	$$
	\phi_{u,v}: X \to [u,v], \ x \mapsto m(u,x,v)
	$$
	is monotone and continuous in the shadow topology for every $u, v \in X$. 
\end{prop}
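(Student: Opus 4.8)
The plan is to treat $\phi_{u,v}$ as the nearest‑point (``gate'') projection of $X$ onto the sub‑pretree $[u,v]$ and to verify the three needed properties in turn: that it is a retraction, that it is monotone, and that it is shadow‑continuous. First I would record that $\phi_{u,v}$ lands in $[u,v]$ and fixes it pointwise. By the definition of the median, $m(u,x,v)\in[u,x]\cap[u,v]\cap[x,v]\subseteq[u,v]$, so $\phi_{u,v}\colon X\to[u,v]$ is well defined. If $x\in[u,v]$, then $x\in[u,x]\cap[x,v]$ by (A0) together with $x\in[u,v]$, so $x$ belongs to the defining intersection of $m(u,x,v)$; since a nonempty median is a singleton, $\phi_{u,v}(x)=x$. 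Thus $\phi_{u,v}$ is a retraction onto $[u,v]$.

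For monotonicity I would pass to the median‑algebra formalism. By Remark \ref{r:med}(1) the median pretree $(X,R)$ is a median algebra and $[u,v]$ is a (convex) median subalgebra, so by Remark \ref{r:med}(2) it suffices to show that $\phi_{u,v}$ is median‑preserving, i.e. a median homomorphism. Here I would invoke the symmetric distributive law $m(m(a,b,c),d,e)=m(m(a,d,e),m(b,d,e),m(c,d,e))$, valid in every median algebra (see \cite{Vel}). Applying it with $d=u,\,e=v$ and using the symmetry axiom (M2) gives $\phi_{u,v}(m(a,b,c))=m(m(a,b,c),u,v)=m(m(a,u,v),m(b,u,v),m(c,u,v))=m(\phi_{u,v}(a),\phi_{u,v}(b),\phi_{u,v}(c))$, so $\phi_{u,v}$ is median‑preserving, hence monotone.

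For continuity, the image $[u,v]$ is a chain under $x\leq_u y:\Leftrightarrow \lan u,x,y\ran$ (a standard pretree fact, \cite{B,Mal-14}), and by Example \ref{ex:median}(2) its shadow topology is the order topology, whose closed subbase consists of the rays $[u,a]=\{t\in[u,v]:t\leq_u a\}$ and $[a,v]=\{t\in[u,v]:a\leq_u t\}$. Hence it is enough to show that $\phi_{u,v}^{-1}([u,a])$ and $\phi_{u,v}^{-1}([a,v])$ are $\tau_s$-closed. The key lemma I would isolate is the \emph{gate property}: for every $q\in[u,v]$ one has $\pi(x):=m(u,x,v)\in[x,q]$. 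This follows from $[u,v]=[u,\pi(x)]\cup[\pi(x),v]$ (axiom (A4)) by case analysis: if $q\in[u,\pi(x)]$, then $\lan u,q,\pi(x)\ran$ and $\pi(x)\in[u,x]$, so axiom (A5) (with $a=u,b=q,c=\pi(x),d=x$) yields $\pi(x)\in[x,q]$; the case $q\in[\pi(x),v]$ is symmetric, using $v$ in place of $u$. Granting the gate property and the convexity of intervals in a pretree, I would then prove, for $a\in[u,v]$, the identity $\phi_{u,v}^{-1}([u,a])=\{x\in X:a\in[x,v]\}=S^v_a$: if $a\in[x,v]$, the gate property gives $\pi(x)\in[x,a]$, so on the chain $[x,v]$ we have $\pi(x)\leq_x a\leq_x v$, whence $a\in[\pi(x),v]$ and therefore $\pi(x)\leq_u a$; conversely $\pi(x)\leq_u a$ gives $a\in[\pi(x),v]\subseteq[x,v]$ by convexity. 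Symmetrically $\phi_{u,v}^{-1}([a,v])=\{x:a\in[x,u]\}=S^u_a$ (the only degenerate case $a=v$ gives the whole space $X$). Since $S^v_a$ and $S^u_a$ are subbasic closed shadows, both preimages are closed, so $\phi_{u,v}$ is continuous.

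The step I expect to be the main obstacle is the continuity argument — concretely, extracting the gate property cleanly from the pretree axioms and converting the fibers of the projection into the shadows $S^v_a,\,S^u_a$. By contrast, the monotonicity is essentially formal once one works in median algebras and quotes the distributive law, and the retraction property is immediate from the singleton‑median observation.
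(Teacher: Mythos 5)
Your proof is correct, and its skeleton coincides with the paper's: the retraction property via the singleton-median observation, monotonicity via Sholander's distributive identity (the paper quotes \cite[Equation 8.7]{Shol}, which is exactly your symmetric distributive law specialized at $d=u$, $e=v$), and continuity by showing that preimages of the subbasic order-closed rays of the chain $[u,v]$ are precisely the shadows, $\phi_{u,v}^{-1}[u,a]=S^v_a$ and $\phi_{u,v}^{-1}[a,v]=S^u_a$ — the very identities the paper establishes. Where you genuinely diverge is in how those identities are proved. The paper verifies the inclusion $\phi_{u,v}^{-1}[u,w]\subseteq S^v_w$ by a direct (A4)/(A5) computation, and the reverse inclusion by contradiction, assuming $m\notin[u,w]$ and deriving via (A2), (A3), (A5) that $m\notin[x,u]$, contradicting the median property. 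You instead isolate the \emph{gate property} $m(u,x,v)\in[x,q]$ for all $q\in[u,v]$ — proved correctly from (A4) and two applications of (A5) — and then obtain \emph{both} inclusions positively by chain reasoning in $[x,v]$ and $[u,v]$. This is a cleaner organization: the gate lemma is the conceptual content (the median projection is the gate map onto the convex set $[u,v]$), it avoids the reductio entirely, and it is reusable. Two small loose ends to tighten: (i) your appeal to Example \ref{ex:median}.2 only identifies the \emph{intrinsic} shadow topology of the chain $[u,v]$ with its order topology; to conclude continuity into $[u,v]$ as a subspace of $(X,\tau_s)$ you also need that the subspace topology agrees with the interval topology, which is where the paper cites convexity of intervals and \cite[Prop.~6.5]{Mal-14}; (ii) handle the trivial case $u=v$ (constant map), and note that the degenerate indices are $a=v$ for the family $[u,a]$ and $a=u$ for the family $[a,v]$ (your parenthetical attaches $a=v$ to the wrong family) — the paper sidesteps this by restricting to $w\in[u,v)$ and $w\in(u,v]$ respectively, since $S^v_w$ requires $w\neq v$.
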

\begin{proof} 
Note that always $c \in [a,b]$ if and only if $med(a,c,b)=c$. So, $\phi_{u,v}(x)=x$ for every $x \in [u,v]$. 
This means that $\phi_{u,v}$ is a retraction.
	A well known property of median algebras, namely \cite[Equation 8.7]{Shol}, directly implies that
	$$
	m(m(u,x_1,v), m(u,x_2,v), m(u,x_3,v)) = m (u,m(x_1,x_2,x_3),v). 
	$$ 
	This means that $m(\phi(x_1), \phi(x_2), \phi(x_3))= \phi(m(x_1,x_2,x_3))$.  
	So every $\phi_{u,v}$ is a median preserving map (hence, monotone, as a map between pretrees). 
	
	Now we check the continuity of $\phi_{u,v}$. 
	If $u=v$ then $\phi_{u,v}$ is constant. So, we can suppose that $u \neq v$.
	Every interval $[a,b]$ is a convex subset of $X$. Hence its interval topology and its topological subspace topology of $(X,\tau_s)$ are the same 
	(see \cite[Prop. 6.5]{Mal-14}).  
	 It is enough to show that the preimage of a closed subbase 
	 element in the space $[u,v]$ is closed in the shadow topology. We prove that in fact 
$$	\phi_{u,v}^{-1}[u,w] = S^v_w \ \ \ \forall \ w \in [u,v) \ \ \text{and} \ \  \phi_{u,v}^{-1}[w,v] = S^u_w \ \ \ \forall \ w \in (u,v].$$
First we show that $\phi_{u,v}^{-1}[u,w] \subseteq S^v_w$.

Let $x \in \phi_{u,v}^{-1}[u,w]$. This means that $m:=m(u,x,v) \in [u,w]$. Since 
$w \in [u,v]$, by (A5) we have $w \in [m,v]$. So, $[m,w] \cup [w,v]=[m,v]$ by (A4). Again, by (A4) we obtain $[x,m] \cup [m,v]=[x,v]$. So, $w \in [x,v]$. Hence, $x \in S^v_w$. 

Now we show $\phi_{u,v}^{-1}[u,w] \supseteq S^v_w$.
Let $x \in S^v_w$. This means that $w \in [x,v]$. We have to show that $m \in [u,w]$ (where $m:=m(u,x,v)$).
 Assuming the contrary, suppose $m \notin [u,w]$. 
 Clearly, $m \in [u,v]$. By (A3), $[u,v] \subseteq [u,w] \cup [w,v]$. So, we have $m \in (w,v]$. Since $w \in [v,x]$ and $m \in [v,w]$, by (A5) we get $w \in [m,x] = [x,m]$. Similarly, since $m \in [v,w]$ and $ w \in [v,u]$, by (A5) we get $w \in [m,u]$.
 
 Now taking into account that $m \neq w$ 
by (A2) we have $m \notin [x,w]$ and also $m \notin [w,u]$. So,  $m \notin [x,w] \cup [w,u]$. 
Then (A3) guarantees that $m \notin [x,u]$. This contradicts the fact that $m$, being the median of the triple $x,u,v$, belongs to $[x,u]$.    
\sk 
Similarly we can check also that $\phi_{w,v}^{-1}[u,w] = S^u_w$.

	\end{proof}

\begin{thm} \label{p:ConvFun} 
	Let $X$ be a median pretree. Then every pair of monotone (equivalently, convex) 
	real valued functions $f_i: X \to \R$, $i \in \{0,1\}$ is not independent. 
\end{thm}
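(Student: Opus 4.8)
The plan is to argue by contradiction, first translating independence of the pair $(f_0,f_1)$ into the existence of four witness points, and then exploiting the tree-like (one-dimensional) nature of the median pretree to rule them out.

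First I would unwind the definition of independence for the two-element family $\{f_0,f_1\}$. If the pair were independent, there would be reals $a<b$ so that every disjoint pair $P,M\subseteq\{0,1\}$ yields a nonempty intersection; letting $(P,M)$ run over the four ``extreme'' partitions with $P\cup M=\{0,1\}$ produces four points $p_{SS},p_{LL},p_{SL},p_{LS}\in X$ realizing the four sign patterns
$$
f_0,f_1<a \text{ at } p_{SS};\quad f_0,f_1>b \text{ at } p_{LL};\quad f_0<a<b<f_1 \text{ at } p_{SL};\quad f_1<a<b<f_0 \text{ at } p_{LS}.
$$
The remaining partitions impose only weaker constraints, so these four points capture the whole content of independence, and the goal becomes showing they cannot coexist.

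The key step uses that monotone real valued functions on a median pretree are exactly the median-preserving ones (Remark \ref{r:med}.2), together with the fact that in $\R$ the median $m$ is the middle value. I would introduce the two auxiliary points
$$
x:=m(p_{SL},p_{LS},p_{SS}),\qquad y:=m(p_{SL},p_{LS},p_{LL}).
$$
Applying the median-preserving maps $f_0,f_1$ and computing the median of the three resulting real values in each coordinate gives $f_0(x),f_1(x)<a$ and $f_0(y),f_1(y)>b$; that is, $x$ lands in the ``both small'' corner and $y$ in the ``both large'' corner. Crucially, both $x$ and $y$ lie on the single interval $[p_{SL},p_{LS}]$, because the median of a triple belongs to each of the three pairwise intervals.

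The final step is a linear-ordering argument on $[p_{SL},p_{LS}]$. Recall that an interval in a pretree is linearly ordered: for $x,y\in[u,v]$ one has $x\in[u,y]$ or $y\in[u,x]$, which follows from (A3) and (A5) of Lemma \ref{l:PropPretree}. Taking $p_{SL}$ as base point, either $x\in[p_{SL},y]$ or $y\in[p_{SL},x]$. In the first case, interval-preservation of $f_1$ forces $f_1(x)\in[f_1(p_{SL}),f_1(y)]$; but both $f_1(p_{SL})$ and $f_1(y)$ exceed $b$, so $f_1(x)>b$, contradicting $f_1(x)<a$. In the second case, interval-preservation of $f_0$ forces $f_0(y)\in[f_0(p_{SL}),f_0(x)]\subseteq(-\infty,a)$, contradicting $f_0(y)>b$. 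Either way we reach a contradiction, so the pair is not independent. I expect the only delicate point to be the bookkeeping in the first step — extracting exactly the four witness points and pinning down their sign patterns — since the rest is a short and robust computation; the geometric heart of the obstruction is simply that these four witnesses would force the image of the chain $[p_{SL},p_{LS}]$ to visit all four corners of a square, which a monotone map out of a linearly ordered set cannot do.
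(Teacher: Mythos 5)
Your proof is correct, and it shares the paper's core device---extract the four sign-pattern witnesses and play the median against them---but the finishing mechanism is genuinely different. The paper stays entirely on the domain side: writing $A_i=f_i^{-1}(-\infty,a)$, $B_i=f_i^{-1}(b,\infty)$, it observes these are \emph{convex} (preimages of convex sets under convex maps), picks the four witnesses, and forms a single median $m=m(p_{LS},p_{SL},p_{LL})$ (your $y$); then $m\in[p_{LL},p_{LS}]\subset B_1$ and $m\in[p_{LL},p_{SL}]\subset B_2$, while axiom (A3) gives $[p_{SL},p_{LS}]\subseteq[p_{SL},p_{SS}]\cup[p_{SS},p_{LS}]\subseteq A_1\cup A_2$, so $m\in(A_1\cup A_2)\cap B_1\cap B_2=\emptyset$, an immediate contradiction. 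You instead push the median \emph{forward} through $f_0,f_1$ (legitimate by Remark \ref{r:med}.2, since monotone equals median-preserving and $\R$ is a median algebra), build two medians $x,y$ on the chain $[p_{SL},p_{LS}]$, and then invoke the linear ordering of pretree intervals plus interval-preservation to get the clash. Both routes are sound and use only facts available in the paper (the linearity of $[u,v]$ is cited there via Bowditch in the proof of Theorem \ref{t:pretree}); the paper's version is slightly leaner---one median, no need for median-preservation or the linear order on intervals, just convexity of the level sets and (A3)---while yours makes the geometric obstruction more vivid: a monotone image of a single chain cannot visit all four corners of the square. Your second point $x=m(p_{SL},p_{LS},p_{SS})$ is an extra ingredient the paper's argument dispenses with, since it uses $p_{SS}$ only to split $[p_{SL},p_{LS}]$ via (A3).
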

\begin{proof}  Assuming that $\{f_1, f_2\}$ is an independent pair, there exist real numbers $a < b$ 
	such that 
	$$
	A_1 \cap A_2 \neq \emptyset, \ A_1 \cap B_2 \neq \emptyset, \ B_1 \cap A_2 \neq \emptyset, \ B_1 \cap B_2 \neq \emptyset, \
	$$
	where $A_i=f_i^{-1}(-\infty,a), \ B_i=f_i^{-1}(b, \infty)$, $i \in \{0,1\}$. 
	
	Choose four points $$a \in A_1 \cap B_2, \ b \in A_1 \cap A_2, \ c \in A_2 \cap B_1, \ d \in B_1 \cap B_2.$$
	By the monotonicity of the functions $f_1, f_2$ the preimages of convex subsets are convex. So, 
	$A_i, B_i$ are convex. Then $$[a,b] \subset A_1, [b,c] \subset A_2, [c,d] \subset B_1, [d,a] \subset B_2.$$
	
	Consider the median $m:=med(c,a,d)=[a,c] \cap [c,d] \cap [d,a]$. Then 
	$$m \in [a,c] \cap B_1 \cap B_2.$$  
	Using (A3) we get
	$$
	[a,c] \subset [a,b] \cup [b,c] \subset A_1 \cup A_2.
	$$
	Therefore, $$m \in (A_1 \cup A_2) \cap (B_1 \cap B_2).$$
	Clearly, $A_1 \cap B_1 = A_2 \cap B_2 =\emptyset$. So, $(A_1 \cup A_2) \cap (B_1 \cap B_2)= \emptyset$, a contradiction. 
	
	\end{proof}

\sk

For a compact median pretree $X$ we denote by $H_+(X)$ the topological group of 
$R$-monotone (equivalently, median-preserving) homeomorphisms. 
We treat $H_+(X)$ as a topological subgroup of the full homeomorphism group $H(X)$.

The following result generalizes Theorem \ref{t:Dendron}.  
In the case of a dendron $D$ we have $H_+(D)=H(D)$. 
\begin{thm} \label{t:pretree} 
	For every compact median 
	pretree $X$ and its automorphism group $G=H_+(X)$ the action 
	of the topological group $G$ on $X$ is Rosenthal representable. 
	It follows that the topological group $H_+(X)$ is Rosenthal representable.
\end{thm}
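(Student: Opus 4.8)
The plan is to mimic the proof of Theorem \ref{t:Dendron}, applying the characterization of Rosenthal representability from Theorem \ref{t:criter1}. I would take as my candidate separating family $F$ the collection of all continuous (in the shadow topology) monotone functions $X \to [0,1]$. Three things must be verified: that $F$ is $G$-invariant, that $F$ separates the points of $X$, and that $F$ is a tame family. The $G$-invariance is immediate, exactly as in Lemma \ref{l:propM}.2: if $f$ is monotone and $g \in G=H_+(X)$ is a monotone homeomorphism, then $fg$ is again a continuous monotone function, since composition of monotone maps is monotone and $g$ is a homeomorphism in the shadow topology.

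For the point-separation, the key tool is Proposition \ref{p:mRetr}, which supplies, for each pair $u \neq v$, a continuous monotone retraction $\phi_{u,v}: X \to [u,v]$, $x \mapsto m(u,x,v)$. The interval $[u,v]$, being a convex subset of the median pretree with its subspace (equivalently interval) topology, is a compact linearly ordered space. I would then, exactly as in Lemma \ref{l:retracts}, postcompose with an order-preserving continuous map $h:[u,v] \to [0,1]$ sending $u \mapsto 0$ and $v \mapsto 1$ (available by the Nachbin theory for compact linearly ordered spaces used there). The composite $h \circ \phi_{u,v}$ is then a continuous monotone function $X \to [0,1]$ separating $u$ from $v$, so $F$ separates points.

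The tameness of $F$ is the crux, and here the new ingredient over Theorem \ref{t:Dendron} is Theorem \ref{p:ConvFun}, which states that no pair of monotone real valued functions on a median pretree is independent. Since an independent sequence would in particular contain an independent pair, Theorem \ref{p:ConvFun} shows directly that the whole family $M_C(X)$ of monotone real valued functions on $X$ contains no independent sequence, hence is tame; a fortiori its subfamily $F=CM(X,[0,1])$ is tame. I would note that this argument even bypasses the route through the pointwise closure and fragmentability used in Corollary \ref{c:tameFamily}, because Theorem \ref{p:ConvFun} already forbids independent pairs outright. Having established that $F$ is $G$-invariant, point-separating, bounded, and tame, Theorem \ref{t:criter1} yields that the $G$-system $X$ is Rosenthal representable.

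The main obstacle, as I see it, is ensuring that the functions produced are genuinely continuous in the shadow topology and that the whole scheme goes through for the \emph{abstract} median pretree, where the intervals need not be real arcs. This is precisely what Propositions \ref{p:mRetr} and the surrounding lemmas handle: one must know that $\phi_{u,v}$ is $\tau_s$-continuous (proved via the explicit identification of preimages of subbasic closed sets with shadows) and that $[u,v]$ carries its interval topology as a subspace, so that the Nachbin-type separating map exists. Finally, the concluding sentence that $H_+(X)$ itself is Rosenthal representable follows from the general principle, cited in the proof of Theorem \ref{t:Dendron} (see \cite[Lemma 3.5]{GM-tame}), that Rosenthal representability of a compact $H(K)$-system $K$ passes to the acting group $H_+(X) \leq H(X)$.
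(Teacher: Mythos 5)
Your proposal is correct and follows essentially the same route as the paper's proof: the retraction $\phi_{u,v}$ of Proposition \ref{p:mRetr} composed with a Nachbin order-preserving map gives point-separation, $G$-invariance follows from monotonicity of the action, tameness comes directly from Theorem \ref{p:ConvFun} (the paper likewise invokes it here rather than the closure/fragmentability argument of Corollary \ref{c:tameFamily}), and Theorem \ref{t:criter1} concludes. Your observation that forbidding independent \emph{pairs} already rules out independent sequences is exactly the mechanism the paper relies on, so there is nothing genuinely different to compare.
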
  
\begin{proof} 
	Recall again that a median pretree is Hausdorff 
	in its shadow topology (Remark \ref{r:med}.3). 
	By Proposition \ref{p:mRetr} the retraction map 
	$$
	\phi_{u,v}: X \to [u,v], \ x \mapsto m(u,x,v)
	$$
	is monotone and continuous in the shadow topology for every $u, v \in X$.
	Every $[u,v]$ is a linearly ordered set with respect to the order: $x \leq y$ 
	whenever $\lan x,y,v \ran$ (see for example, \cite[p.14]{B}). 
	Moreover $[u,v]=\phi(X)$ is 
	compact in the subspace topology which coincides with the interval topology of the linear order. 
	So, as in Lemma \ref{l:retracts}, using Nachbin's result we see that the set $CM(X)$ of continuous monotone functions separates the points. Also, $CM(X)$ is $G$-invariant because the action $G \curvearrowright X$ is monotone. Theorem \ref{p:ConvFun} guarantees that  $CM(X)$ is a tame family.  
	A median pretree is always Hausdorff 
	in its shadow topology (Remark \ref{r:med}.3). 
	The rest is similar to the proof of Theorem \ref{t:Dendron}.
\end{proof}

 By Example \ref{ex:median}.4, Theorem \ref{t:pretree} 
 applies when $X$ is a $\Z$-tree and we get the following:

\begin{cor} \label{c:Ends} 
	Let $X$ be a $\Z$-tree. Denote by $Ends(X)$ the set of all its ends.  
	Then for every monotone group action 
	$G \curvearrowright X$ with continuous transformations 
	the induced action of $G$ on the compact space $\widehat{X}:=X \cup Ends(X)$ 
	is Rosenthal representable.  
\end{cor}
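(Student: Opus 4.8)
The plan is to reduce the statement to Theorem~\ref{t:pretree}, since by Example~\ref{ex:median}.4 the space $\widehat{X}=X\cup Ends(X)$ already carries a natural structure of a \emph{compact} median pretree, which by Remark~\ref{r:med}.3 is Hausdorff in its shadow topology $\tau_s$. Thus $\widehat{X}$ is precisely the kind of space to which Theorem~\ref{t:pretree} applies, and the only thing that has to be produced is a monotone continuous action of $G$ on $\widehat{X}$ extending the given monotone action on $X$.

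First I would extend the action to the ends. Each $g\in G$ is by hypothesis a monotone (equivalently, median-preserving) homeomorphism of $X$, so it respects the betweenness relation $R$ and hence the whole pretree structure from which $Ends(X)$ is defined. Consequently $g$ permutes the ends of $X$ and extends canonically to a bijection $\widehat{g}\colon\widehat{X}\to\widehat{X}$ which preserves the extended median. Because every subbasic closed set of $\tau_s$ is a shadow $S^v_u$, defined purely through the betweenness relation, and $\widehat{g}$ together with $\widehat{g}^{-1}$ carries shadows to shadows, the map $\widehat{g}$ is a homeomorphism of $(\widehat{X},\tau_s)$. This produces a homomorphism $G\to H_+(\widehat{X})$, $g\mapsto\widehat{g}$, and a continuous $G$-action on $\widehat{X}$ extending the original one.

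It then remains to invoke Theorem~\ref{t:pretree}, which gives that the action of the full group $H_+(\widehat{X})$ on $\widehat{X}$ is Rosenthal representable; pulling the corresponding representation $(h,\a)$ back along $G\to H_+(\widehat{X})$ yields a Rosenthal representation of the $G$-system $\widehat{X}$ (the same Rosenthal space $V$ and embedding $\a$ serve, with $h$ precomposed by $g\mapsto\widehat{g}$). Equivalently, and avoiding any discussion of the topology on $H_+(\widehat{X})$, I can argue directly through Theorem~\ref{t:criter1}: exactly as in the proof of Theorem~\ref{t:pretree} the family $F:=CM(\widehat{X},[0,1])$ is bounded, separates the points of $\widehat{X}$ by Proposition~\ref{p:mRetr} together with Nachbin's theorem, and is tame by Theorem~\ref{p:ConvFun}; and by Lemma~\ref{l:propM}.2 it is $G$-invariant because $G$ acts by monotone homeomorphisms. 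Theorem~\ref{t:criter1} then gives Rosenthal representability at once.

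The main obstacle is the extension step: one must check that monotone homeomorphisms of the $\Z$-tree $X$ really do extend to median-preserving homeomorphisms of the compactification $\widehat{X}$, and that the resulting action $G\times\widehat{X}\to\widehat{X}$ is continuous in the shadow topology. This is essentially a naturality property of the construction $X\mapsto\widehat{X}$ of Example~\ref{ex:median}.4 (following \cite{Mal-14}); once it is in place, the conclusion is an immediate application of the already established Theorem~\ref{t:pretree}.
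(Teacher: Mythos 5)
Your proposal is correct and takes essentially the same route as the paper: the paper's entire proof of Corollary \ref{c:Ends} consists of the observation that $\widehat{X}=X\cup Ends(X)$ carries a natural compact median pretree structure by Malyutin's construction (Examples \ref{ex:median}) and that Theorem \ref{t:pretree} therefore applies. The extension step you flag (that monotone homeomorphisms of $X$ extend to median-preserving homeomorphisms of $(\widehat{X},\tau_s)$) and the passage from $H_+(\widehat{X})$ to an arbitrary $G$ via pullback or directly via Theorem \ref{t:criter1} are left implicit in the paper, so your write-up is just a more carefully spelled-out version of the same argument.
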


	Such compact spaces $\widehat{X}$ as in Corollary \ref{c:Ends} are  often zero-dimensional. 
So, at least, formally this case cannot be deduced from the dendron's case.

\br
\subsection{Generalized Helly's selection principle} 

\begin{thm} \label{GenHellyThm} 
Let $X$ be a median pretree and $\{f_n: X \to \R\}_{n \in \N}$ be a bounded sequence of convex  
(equivalently, monotone) real valued maps. Then there exists a pointwise converging subsequence. 
\end{thm}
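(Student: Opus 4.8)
The plan is to derive this from Rosenthal's $\ell_1$-dichotomy, using the tameness of monotone functions already isolated in Theorem \ref{p:ConvFun}. The crucial preliminary observation is that \emph{any} subsequence of $\{f_n\}$ is again a sequence of convex (equivalently, monotone) real valued functions on $X$, and that Theorem \ref{p:ConvFun} forbids independence not just for a fixed pair but, implicitly, for the whole sequence. Concretely, I would argue as follows: if some subsequence $\{f_{n_k}\}$ were an independent sequence with separating constants $a < b$, then restricting the defining condition to the two indices $k=1,2$ (that is, to the finite disjoint sets $P,M \subseteq \{1,2\}$) would exhibit the pair $\{f_{n_1}, f_{n_2}\}$ as an independent pair with these same constants, contradicting Theorem \ref{p:ConvFun}. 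Hence no subsequence of $\{f_n\}$ is independent.

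Next I would invoke Rosenthal's theorem \cite{Ros0} in its combinatorial form, where independent sequences are precisely the obstruction to weak sequential compactness: a uniformly bounded sequence of real valued functions on an arbitrary set $X$, having no independent subsequence, admits a weak-Cauchy subsequence in the Banach space $\ell_{\infty}(X)$ of bounded real functions. Since by hypothesis $\{f_n\}$ is bounded, this applies verbatim and produces a subsequence $\{f_{n_k}\}$ that is weak-Cauchy.

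Finally I would convert weak-Cauchyness into pointwise convergence: for each $x \in X$ the evaluation functional $\delta_x$ lies in $\ell_{\infty}(X)^*$, so $f_{n_k}(x) = \langle \delta_x, f_{n_k}\rangle$ is a Cauchy sequence of reals and therefore converges. This yields the desired pointwise converging subsequence. I do not expect a genuine obstacle here, since the substance has been packaged into Theorem \ref{p:ConvFun} and Rosenthal's theorem; the two points that require care are the passage from ``no independent pair'' to ``no independent subsequence'' indicated above, and invoking Rosenthal's dichotomy in the form that guarantees a \emph{pointwise} (weak-Cauchy) convergent subsequence rather than merely a weakly convergent one. I would emphasize that no separability, completeness, or even topology on $X$ is needed, since the entire argument takes place on the abstract underlying set, which is exactly what makes the classical Helly principle extend to this generality.
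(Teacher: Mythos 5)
Your proof is correct and follows essentially the same route as the paper: both combine Theorem \ref{p:ConvFun} (no independent pair, hence, by restricting the independence condition to two indices, no independent subsequence) with the strengthened form of Rosenthal's dichotomy from \cite{Ros0} (independent subsequence versus pointwise converging subsequence). Your only deviation is the detour through weak-Cauchy subsequences in $\ell_\infty(X)$ and the evaluation functionals $\delta_x$, which is valid but unnecessary, since the paper cites Rosenthal's Theorem 1 in the form that directly yields a pointwise converging subsequence.
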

\begin{proof} 
Combine Theorem \ref{p:ConvFun} 
with the following form of Rosenthal's theorem: 
Let $f_n: S \to \R$ be a bounded sequence of functions on a set $S$. 
Then it has a subsequence which is pointwise converging or 
has a subsequence which is independent;  
see Theorem 1 in Rosenthal's classical paper \cite{Ros0}. 
In Rosenthal's original formulation he states a weaker statement ``$l_1$-subsequence" (instead of ``independent"). 
However, as Rosenthal's proof shows he proves in fact a little bit more 
(see text above Lemma 5 on page 2413),
namely
that there exists an 
independent subsequence. 

\end{proof}

\begin{cor} \label{c:Helly} 
Let $X$ be either a dendron or a linearly ordered set. 
Then the pointwise compact set of all monotone maps $M(X,[c,d])$ into the real interval $[c,d]$ is sequentially compact. 
\end{cor}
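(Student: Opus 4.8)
The plan is to deduce Corollary \ref{c:Helly} directly from the two results that immediately precede it, namely Theorem \ref{GenHellyThm} (the generalized Helly selection principle) and the compactness statements about spaces of monotone functions. First I would observe that both a dendron and a linearly ordered set are median pretrees (Example \ref{ex:median}.1 and .2), so the theory developed in this section applies. For a dendron $D$ with its standard betweenness relation, $M(D,[c,d]) = M_B(D,[c,d])$, which by Lemma \ref{l:propM}.4 is a pointwise closed (hence pointwise compact) subset of $[c,d]^D$; for a linearly ordered set $X$ the analogous fact holds by the same closedness-of-the-relation argument. Thus in either case $M(X,[c,d])$ is genuinely a pointwise compact set, which justifies the phrasing in the statement.

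The core of the argument is then to upgrade pointwise compactness to sequential compactness. I would take an arbitrary sequence $\{f_n\}_{n \in \N}$ in $M(X,[c,d])$; these are bounded convex (equivalently monotone) real-valued maps on a median pretree. Theorem \ref{GenHellyThm} then applies verbatim and yields a pointwise converging subsequence $f_{n_k} \to f$. The only remaining point is to check that the limit $f$ again lies in $M(X,[c,d])$, i.e.\ that it is still monotone and still $[c,d]$-valued. The $[c,d]$-valuedness is immediate since $[c,d]$ is pointwise closed, and monotonicity of the pointwise limit follows precisely from the closedness of the (interval/betweenness) relation exploited in Lemma \ref{l:propM}.4 — a pointwise limit of $B$-monotone maps into a dendron, or of monotone maps into a linearly ordered set, is again monotone. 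Hence $f \in M(X,[c,d])$ and the subsequence converges \emph{within} the set, giving sequential compactness.

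The main obstacle — and it is a mild one — is simply ensuring that Theorem \ref{GenHellyThm} is applicable in exactly the required generality, i.e.\ recognizing that a dendron and a linearly ordered set are the two concrete instances of ``median pretree'' to which that theorem speaks, and that ``monotone'' in the sense of Definition \ref{d:monot2} coincides with ``convex'' in the median-algebra sense (Remark \ref{r:med}.2). Once those identifications are in place there is no real work left: the heavy lifting (the Rosenthal-dichotomy argument) is entirely absorbed into Theorem \ref{GenHellyThm}, and the role of the corollary is to package that selection principle together with the pointwise-compactness of the target function space into the clean statement that $M(X,[c,d])$ is sequentially compact. So the proof is essentially a two-line citation: invoke Theorem \ref{GenHellyThm} for the extraction of a convergent subsequence, and invoke Lemma \ref{l:propM}.4 (together with the identification of $M$ with $M_B=M_C$ via Lemma \ref{l:monotonicities}) to confirm the limit stays in the space.
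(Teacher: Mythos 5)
Your proof is correct and follows essentially the same route as the paper, which states Corollary \ref{c:Helly} as an immediate consequence of Theorem \ref{GenHellyThm} (applicable since dendrons and linearly ordered sets are median pretrees, Examples \ref{ex:median}.1--2) together with the pointwise closedness of the space of monotone maps as in Lemma \ref{l:propM}.4. Your extra care in checking that the pointwise limit remains in $M(X,[c,d])$ --- noting that only closedness of the betweenness relation on the target $[c,d]$ is needed, so connectedness of the domain is not an issue for the linearly ordered case --- is exactly the right way to fill in the details the paper leaves implicit.
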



\begin{remark}
	For linearly ordered sets Theorem \ref{GenHellyThm} can be extended to 
	sequences of real valued functions with bounded total variation \cite{Me-Helly}. 
	This suggests the idea that one should search for a right analog for bounded variation functions, defined on dendrons, or more generally on (median) pretrees. 
\end{remark}

  \br 
  
\subsection{Fragmentability of monotone functions on compact Hausdorff pretrees}

\begin{lem} \label{l:stabPretr} 
	If the shadow topology $\tau_s$ on a pretree $(X,R)$ is Hausdorff (e.g., median pretree). Then the pretree relation $R$ is 
	weakly 
	$\tau_s$-stable (Definition \ref{d:stable}.2).   
\end{lem}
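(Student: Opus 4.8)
The plan is to observe that, for a given infinite $K \subseteq X$, the only genuinely nontrivial task is to locate a pair of distinct points $u,v \in K$ whose interval $[u,v]$ contains a third point $w$; once such a strict separator is in hand, the neighborhoods required by Definition \ref{d:stable}.2 are produced automatically from the ``branches'' at $w$. So I would split the argument into a soft part (the branches do the job) and a hard part (the strict separator exists), the latter being where Hausdorffness of $\tau_s$ is used.

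For the soft part, fix $w \in X$ and put $x \sim_w y \iff w \notin [x,y]$ on $X \setminus \{w\}$. Using (A0)--(A3) of Lemma \ref{l:PropPretree} this is an equivalence relation — transitivity is exactly $[x,z] \subseteq [x,y] \cup [y,z]$ — whose classes are the branches at $w$. The point is that each branch is $\tau_s$-open: the branch of a point $u \neq w$ is precisely $X \setminus S^u_w$, the complement of a subbasic closed shadow. Hence, given $u,v \in K$ and $w \in [u,v] \setminus \{u,v\}$, I would take $U := X \setminus S^u_w$ and $V := X \setminus S^v_w$. These are open neighborhoods of $u$ and $v$, they omit $w$, and $u \not\sim_w v$ since $w \in [u,v]$. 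Then any $x \in U \cap K$ and $y \in V \cap K$ satisfy $x \sim_w u$ and $y \sim_w v$, so $x \not\sim_w y$ by transitivity, i.e. $w \in [x,y]$ — exactly the condition in Definition \ref{d:stable}.2. This part uses neither infinitude nor the Hausdorff hypothesis.

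The hard part is to produce the strict separator, and this is where I expect the real work to lie. I would argue by contradiction, assuming $[u,v] = \{u,v\}$ for every pair of distinct $u,v \in K$. The key computation is that, under this assumption, every subbasic closed shadow $S^b_a$ (with $a \neq b$) meets $K$ either in a finite set or in all of $K$. Indeed, suppose $S^b_a \cap K$ is infinite while some $r \in K$ lies outside $S^b_a$, i.e. $a \notin [r,b]$. For any $p \in (S^b_a \cap K) \setminus \{r\}$ we have $a \in [p,b]$ and, by (A3), $[p,b] \subseteq [p,r] \cup [r,b] = \{p,r\} \cup [r,b]$; since $a \notin [r,b]$ this forces $a \in \{p,r\}$, and $a = r$ is impossible because $a \in [a,b]$ always whereas $a \notin [r,b]$. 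Thus $a = p$ for every such $p$, which is absurd as soon as $S^b_a \cap K$ has two points distinct from $r$. Hence $K \subseteq S^b_a$.

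Since closed sets are intersections of finite unions of shadows, it follows that every $\tau_s$-closed set meets $K$ in a finite subset or contains $K$; that is, the subspace topology $\tau_s|_K$ is the cofinite topology. As $K$ is infinite this is not Hausdorff, contradicting the fact that a subspace of the Hausdorff space $(X,\tau_s)$ must be Hausdorff. This contradiction supplies the required $u,v$ and finishes the proof. I would close by noting that Hausdorffness is genuinely needed here: the ``discrete'' pretree in which every interval is trivial is a legitimate pretree whose shadow topology is precisely cofinite, and there an infinite $K$ admits no strict separator.
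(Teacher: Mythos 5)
Your proof is correct, and while its ``soft'' half coincides with the paper's argument, its ``hard'' half takes a genuinely different route. For the soft part the paper makes exactly your choice $U=X\setminus S^u_w$, $V=X\setminus S^v_w$ and likewise observes that $w\in[x,y]$ holds for \emph{all} $x\in U$, $y\in V$, not merely for points of $K$; its verification consists of two applications of axiom (B3), which is precisely the transitivity of your branch relation $x\sim_w y \iff w\notin[x,y]$ unwound, so this portion is the same proof in different packaging (your reflexivity step tacitly uses $[x,x]=\{x\}$, which follows from (B2) and (A0) — fine). The real divergence is in producing the strict separator: the paper simply notes that an infinite $K$ with all intervals trivial would be a \emph{star subset} in Malyutin's sense and invokes \cite[Theorem 7.3]{Mal-14}, which forbids infinite stars when $\tau_s$ is Hausdorff, whereas you prove this from scratch. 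Your computation that each subbasic shadow $S^b_a$ traces on $K$ either finitely or fully is correct (the chain $[p,b]\subseteq[p,r]\cup[r,b]=\{p,r\}\cup[r,b]$, with $a=r$ excluded since $a\in[a,b]$ by (A0)), and it propagates to all closed sets since these are intersections of finite unions of shadows; hence $\tau_s|_K$ is coarser than the cofinite topology, so any two nonempty relatively open sets meet, contradicting heredity of Hausdorffness. (Pedantically, that coarseness is all you prove and all you need; equality with the cofinite topology then follows anyway because points are closed in a Hausdorff $\tau_s$.) The trade-off: the paper's citation buys brevity and situates the lemma inside Malyutin's structure theory — the same Theorem 7.3 that underlies the embedding-into-dendrons results used elsewhere in the paper — while your argument buys self-containedness from the axioms (A0)--(A3)/(B1)--(B3) alone, and your closing ``discrete pretree'' example, whose shadow topology is exactly cofinite, neatly certifies that the Hausdorff hypothesis enters only through this separator-existence step.
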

\begin{proof}
	We have to show that for every infinite subset $K \subset X$ there exist: 
	a pair $u,v \in K$, a point $w \in X$ with $w \in [u,v] \setminus \{u,v\}$, 
	neighborhoods $U,V$ of $u,v$ respectively, 
	such that $w \in [x,y]$ for every $x \in U \cap K, y \in V \cap K$. 
	In fact, we show this for every $x \in U, y \in V$. 
	
	First of all note that there exist distinct $u,v \in K$, $w \in X$ 
	such that $w \in [u,v] \setminus \{u,v\}$. If not then 
	$K$ is a \textit{star subset} in terms of \cite{Mal-14}. Since $\tau_s$ is 
	Hausdorff there is no \textit{infinite} star subset in $X$ by \cite[Theorem 7.3]{Mal-14}.
	
	Consider the standard shadow topology neighborhoods 
	$$u \in U:=X \setminus S^u_w \  \text{and} \ v \in V:=X \setminus S^v_w.$$ 
	We are going to show that 
	$w \in [x,y]$ for every $x \in U, y \in V$. 
	By (B3) and $\lan u,w,v \ran$ we have $\lan u,w,x \ran  \vee \lan x,w,v \ran$. 	
	By our choice of $x \in U=X \setminus S^u_w$ it is impossible that $w \in [x,u]$. 
	So we necessarily have the second condition $\lan x,w,v \ran$. Now apply 
	again (B3) but now for the triple $\lan x,w,v \ran$ and a point $y \in V$. 
	Then by the definition of $V:=X \setminus S^v_w$ it is impossible that $\lan y,w,v \ran$. 
	Therefore, we necessarily have $\lan x,w,y \ran$. 
	
\end{proof}

Lemma \ref{l:stabPretr} and Theorem \ref{t:GenFragm} imply

\begin{thm} \label{t:FragmPretr} 
	Let $(X,\tau_s)$ be a compact Hausdorff pretree (e.g., median pretree). 
	Then every monotone real valued function $f: X \to \R$ is fragmented. 
\end{thm}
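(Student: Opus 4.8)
The plan is to deduce the statement directly from the two results just established, namely Lemma \ref{l:stabPretr} and Theorem \ref{t:GenFragm}, since all the genuine work has already been carried out there. First I would unpack the hypothesis: by definition a \emph{compact Hausdorff pretree} is a pretree $(X,R)$ whose shadow topology $\tau_s$ is both compact and Hausdorff. These are precisely the two standing assumptions demanded by the machinery below.

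Next I would verify that the relation $R$ satisfies the weak stability condition. Since $\tau_s$ is Hausdorff, Lemma \ref{l:stabPretr} applies verbatim and tells us that the pretree relation $R$ is weakly $\tau_s$-stable in the sense of Definition \ref{d:stable}.2. It then remains only to observe that the notion of a \emph{monotone} real valued function on the pretree coincides with that of an \emph{$R$-monotone} function: a monotone $f : X \to \R$ is exactly one that respects the pretree relation $R$ together with the standard betweenness relation of $\R$ (that is, $\langle u,w,v\rangle$ forces $f(w)$ to lie between $f(u)$ and $f(v)$), which is the defining property of an $R$-monotone map. With this identification in hand, Theorem \ref{t:GenFragm}, applied to the compact space $(X,\tau_s)$ and the weakly $\tau_s$-stable relation $R$, yields that every such $f$ is fragmented, completing the argument.

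Because the deduction is immediate, there is no real obstacle at this final step; the substance of the theorem is located entirely in the two cited results. In particular, the main difficulty resides in Lemma \ref{l:stabPretr}, whose proof must produce, for an arbitrary infinite set $K \subset X$, a genuine betweenness triple $\langle u,w,v\rangle$ with $u,v \in K$ and a pair of shadow neighborhoods witnessing stability. That step relies on the non-trivial input from \cite[Theorem 7.3]{Mal-14}, that a Hausdorff pretree contains no infinite star subset, which is exactly what guarantees the existence of such a separating point $w$. Thus, were I to expand the proof, the only place demanding care would be confirming that the hypotheses of Lemma \ref{l:stabPretr} and Theorem \ref{t:GenFragm} are met, and both are already at our disposal.
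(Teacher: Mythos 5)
Your proposal is correct and coincides with the paper's own proof, which derives Theorem \ref{t:FragmPretr} exactly as you do: Lemma \ref{l:stabPretr} gives weak $\tau_s$-stability of the pretree relation from the Hausdorff hypothesis, and Theorem \ref{t:GenFragm} then applies to the compact space $(X,\tau_s)$. Your identification of monotone functions on a pretree with $R$-monotone functions in the sense used by Theorem \ref{t:GenFragm} is the only implicit step, and you handle it correctly.
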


\begin{remark}
	By a result of Malyutin \cite[Theorem 7.3]{Mal-14} 
	every $\tau_s$-Hausdorff pretree $X$ can be topologically embedded into a dendron $D$. 
	This leads to a natural question if there exists an \textit{equivariant version} of this embedding.  
	Consider the automorphism group $G:=\Aut(X,R)$ 
	(of all monotone homeomorphisms) as a topological subgroup of $H(X, \tau_s)$. 
	Is it true that there exists an equivariant embedding of $(G,X)$ into $(H(D),D)$ 
	with some dendron $D$? 
	
	It seems that this is true under an additional assumption that the shadow subbase $\mathcal{S}$ of $\tau_s$ is \textit{connected} in the sense of \cite{Mill-Wattel}. Then it is possible to use results from \cite{Mill-Wattel} and \textit{superextensions} (as in \cite[Theorem 7.3]{Mal-14}). 
\end{remark}

\br

\section{Some consequences of the tameness of actions on dendrites}\label{cons}

%

%
%

In the following sections we would like to apply Theorem \ref{thm-tame} in order
to strengthen several known results on actions of
groups on dendrites, mainly from \cite{DM-16}, \cite{MN-16} and \cite{Na-12}.
For the following definitions see e.g. \cite{Gl-76}.

\begin{defin}
Let $(G, X)$ be a dynamical system.
\begin{enumerate}
\item
A pair of points $x, y \in X$ is said to be {\em proximal}
if there is a net $g_i \in G$ and a point $z \in X$ with
$\lim g_i x = \lim g_iy =z$.
The system $(G, X)$ is {\em proximal} if every pair of points in $X$ is proximal.
\item
The system $(G, X)$ is {\em strongly proximal}
if for every probability measure $\mu$ on $X$
there is a net $g_i \in G$ and a point $z \in X$ with
$\lim g_i = \del_z$.
\item
An infinite minimal dynamical system $(G, X)$ is said to be {\em extremely proximal} if for 
every nonempty closed subset $A \subsetneq X$ and any nonempty open subset $U \subset X$
there is an element $g \in G$ with $gA \subset U$.
\end{enumerate}
\end{defin}

An action of a group $G$ on a dendron $D$ is called {\em dendro-minimal}
if every $G$-invariant subdendron $C \subset D$ is either all of $D$ or the emptyset.
By Zorn's lemma every group action on a dendron admits a (nonempty) dendro-minimal subdendron.
An arc $C$ in a dendrite $D$ is a {\em free arc} if it contains no end points:
$C \cap End(D) = \emptyset$.

Except for the extreme proximality claim the following results were proven (independently) in
\cite{S-12}, \cite{Mal-13}, \cite{DM-16} and \cite{SY-17}.

\begin{thm}\label{EP}
Suppose $G$ acts on a dendrite $X$ with no finite orbits:
\begin{enumerate}
\item
There is a unique infinite minimal set $M \subset X$.
\item
If the action is dendro-minimal and $X$ has no free arc then $M=X$.
\item
The action on $M$ is extremely proximal.
\item
The action on $X$ is proximal.
\end{enumerate}
\end{thm}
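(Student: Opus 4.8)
The plan is to obtain assertions (1), (2) and (4) along the lines of \cite{DM-16} and \cite{MN-16}, using the equivariant monotone retractions of Theorem \ref{r:Mill}, and to devote the main effort to the new statement (3). Throughout I use that, since $G$ has no finite orbit, every minimal set is infinite and every nonempty $G$-invariant subdendron is nondegenerate.

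For (1), suppose there were two distinct minimal sets $M_1,M_2$; being distinct and minimal they are disjoint. Let $C_i$ be the smallest subcontinuum containing $M_i$ (well defined and $G$-invariant by Lemma \ref{l:D-prop}). The retraction $r_{C_1}\colon X\to C_1$ of Theorem \ref{r:Mill} is $G$-equivariant because $C_1$ is $G$-invariant. A gate/bridge computation shows that when a subcontinuum is disjoint from $C_1$ its $r_{C_1}$-image is a single point; hence, in the case $C_1\cap C_2=\emptyset$, the point $r_{C_1}(C_2)$ is $G$-fixed, contradicting the hypothesis. The step needing care is that $M_i$ may be a Cantor set, so this collapsing argument must be run on the connected hulls $C_i$ rather than the $M_i$; when the hulls meet I would pass to the invariant subcontinuum $C_1\cap C_2$ and use that $M_i\cap(C_1\cap C_2)$ is, by minimality, empty or all of $M_i$, reducing either to the disjoint case or to a proper invariant sub-hull that again produces a fixed point.

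For (2), the hull of $M$ is a nonempty $G$-invariant subdendron, so dendro-minimality forces it to be all of $X$. To promote this to $M=X$ I would analyse the complementary components of $M$: each is permuted by $G$ and attaches to $M$ along its boundary in $M$, and in the absence of a free arc such a gap cannot occur, so $X\setminus M=\emptyset$; this is the structure-of-minimal-sets argument of \cite{DM-16}. For (4), the strategy is to contract arcs: given $x,y$ I would show $\inf_{g\in G}\diam\,g[x,y]=0$, so that along a suitable net $g_i$ the arcs $g_i[x,y]$ shrink to a point $z$ and $g_ix,g_iy\to z$, proving that $(x,y)$ is proximal. That the limits of translates are well behaved is guaranteed by Corollary \ref{c:fragm}: every element of $E(G,X)$ is a monotone, interval-preserving self-map, so a limiting image of an arc is again a (possibly degenerate) subcontinuum; ruling out the non-contracting alternative uses the uniqueness from (1) together with the no-finite-orbit hypothesis.

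The hard, genuinely new part is (3). Here I would install contracting dynamics on $M$ directly. Fix a proper closed $A\subsetneq M$ and a nonempty relatively open $U\subseteq M$, pick $p\in U$ and $q\in M\setminus A$, and choose an arc $[u,v]$ through $p$ separating a neighbourhood of $q$ from the bulk of $A$; projecting $X$ onto $[u,v]$ by the monotone retraction of Theorem \ref{r:Mill} reduces the task to contracting a bounded subset of a linearly ordered continuum, where minimality and proximality of $(G,M)$ supply group elements driving the $A$-side arbitrarily close to $p$. Covering the finitely many ``directions'' at a branch point and concatenating the corresponding contractions should then yield a single $g$ with $gA\subseteq U$. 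The main obstacle is exactly to make these contractions uniform over the whole, possibly Cantor, set $A$ --- that is, to upgrade the pointwise proximality coming from (4) to the set-wise statement of extreme proximality --- and it is here that the uniqueness of the minimal set and the absence of finite orbits are used most heavily, with no counterpart in the arguments recovered from \cite{DM-16}.
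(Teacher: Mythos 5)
Your part (3) --- which you yourself single out as ``the hard, genuinely new part'' --- is not actually proved: you name the decisive step (making the contractions uniform over the whole, possibly Cantor, set $A$) and explicitly leave it as an obstacle, and the mechanism you propose for it would not close it. First, the retraction $r_{[u,v]}$ of Theorem \ref{r:Mill} is equivariant only when $[u,v]$ is $G$-invariant, and an invariant arc is impossible under your standing hypothesis (its two endpoints would form a finite invariant set, hence finite orbits); so ``projecting $X$ onto $[u,v]$'' does not transport the $G$-action to the linearly ordered continuum, and minimality and proximality of $(G,M)$ supply no group elements acting there. Second, a point of a dendrite may have infinitely many complementary components, so the ``finitely many directions at a branch point'' covering step is false as stated; even after the standard compactness reduction to the finitely many components of diameter larger than $\varepsilon$, concatenating direction-wise contractions can expand what was already contracted --- which is precisely the uniformity problem again, not a solution to it. There is a further circularity risk: your (4) rests on an arc-contraction claim ($\inf_{g\in G}\diam\, g[x,y]=0$) that is only asserted, while your (3) invokes ``proximality of $(G,M)$'', i.e.\ (4).

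The paper closes (3) by a completely different and much more economical device, imported from \cite{DM-16}: the endpoints of $X$ lie in $M$ and are dense there (Lemma 4.4 of \cite{DM-16}); every endpoint has a neighborhood basis of connected open sets $U$ with $|\partial(U)|=1$ (Lemma 2.3); for such $U$ the set $\overline{X\setminus U}$ is again a subdendrite, and Lemma 4.3 of \cite{DM-16} compresses this entire subdendrite into $U$ by a \emph{single} group element. Choosing the endpoint in the complement of the closed set $A\subsetneq M$ gives $A\subset \overline{X\setminus U}$, hence $gA\subset U$ in one stroke --- exactly the set-wise uniformity you could not reach --- and minimality then steers into an arbitrary open target. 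Note also the logical order: in the paper, (4) is a short consequence of (1) and (3) (push an arbitrary pair into the unique minimal set $M$ and use proximality there), so no independent arc-contraction argument is needed. Finally, your sketch of (1) leaves a real case open: when the invariant hulls $C_1$ and $C_2$ of two disjoint minimal sets coincide, neither ``the disjoint case'' nor ``a proper invariant sub-hull'' is available, and the gate-collapsing trick produces nothing; the paper sidesteps this by citing \cite[Corollary 4.3]{MN-16}, whose proof handles that configuration.
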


\begin{proof}
(1) \
Claim (1) is proved in \cite[Corollary 4.3]{MN-16}.
In fact, it suffices to assume that there is no fixed point (see remark 4.2 in \cite{DM-16}).

For (2) see Remark 4.7 in \cite{DM-16}.

(3) \ 
By Lemma 4.4 of \cite{DM-16} the set of end points of $X$ is contained (and is dense) in $M$. 
If $x$ is an endpoint then it has a basis for its neighborhoods which consists
of connected open sets $U$ with $|\partial (U)| = 1$ (see Lemma 2.3 in \cite{DM-16}).
Let $K \subsetneq M$ be a closed subset. 
Thus there is a $x \in End(X) \cap K^c$
and a connected open neighborhood $x \in U$ with $|\partial (U)| = 1$.
As $X$ is a dendrite we have that $\overline{U^c}$ is also a dendrite
and, by Lemma 4.3 of \cite{DM-16} there is $g \in G$ with 
$gK \subset g U^c \subset U$.

(4) \
Given $x, y \in X$ there is a sequence $g_j \in G$ such that
the limits $g_j x = x'$ and $\lim g_j y = y'$ exist and are elements of $M$.
Since $x'$ and $y'$ are proximal, it follows that $x$ and $y$ are proximal as well.
\end{proof}

As corollaries we obtain the following: 

\begin{cor}
Suppose $G$ acts on a dendrite $X$ with no finite orbits and let 
$M \subset X$ be the unique minimal set, then:
\begin{enumerate}
\item
The action of $G$ on $M$ is strongly proximal.
\item
$G$ contains a free group on two generators. In particular $G$ is not amenable.
\item
(\cite[Corollary 8.3]{DM-16})
The amenable radical of $G$ acts trivially on $M$.
\end{enumerate}
\end{cor}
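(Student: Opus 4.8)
The plan is to deduce all three assertions from the extreme proximality of the action on $M$ (Theorem \ref{EP}(3)) together with the fact that $M$ is infinite; tameness plays no further role. The logical backbone is that extreme proximality of $(G,M)$ yields both strong proximality and, via ping-pong, a free subgroup, and that strong proximality then controls the amenable radical. First I would prove (1). Write $P(M)$ for the space of Borel probability measures on $M$ with the weak$^*$ topology, on which $G$ acts by $g_*$, and fix $\mu \in P(M)$. Since $M$ is an infinite metrizable compactum, $\mu$ has at most countably many atoms, so I may pick $p \in M$ with $\mu(\{p\})=0$; a decreasing neighborhood base $W_n \downarrow \{p\}$ gives $\mu(W_n) \to 0$. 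Thus for every $\eps>0$ the set $A:=M \setminus W_n$ (with $n$ large) is a nonempty proper closed subset of $M$ with $\mu(A)>1-\eps$. Given any nonempty open $U \subset M$, extreme proximality supplies $g \in G$ with $gA \subset U$, whence $(g_*\mu)(U) \geq (g_*\mu)(gA)=\mu(A)>1-\eps$. Fixing $z \in M$, letting $U$ range over a neighborhood base at $z$ and $\eps \to 0$, the resulting net $g_i$ satisfies $g_{i*}\mu \to \delta_z$, which is exactly strong proximality of $(G,M)$.

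Next I would prove (2) by the ping-pong (table-tennis) lemma. As $M$ is infinite and regular I can choose four pairwise disjoint nonempty open sets $U_1,U_2,U_3,U_4$. Applying extreme proximality to the nonempty proper closed set $M \setminus U_2$ and the open set $U_1$ produces $a \in G$ with $a(M \setminus U_2) \subset U_1$; taking complements shows simultaneously $a^{-1}(M \setminus U_1) \subset U_2$. Symmetrically there is $b \in G$ with $b(M \setminus U_4)\subset U_3$ and $b^{-1}(M \setminus U_3)\subset U_4$. Because the four sets are pairwise disjoint we have $U_1 \subset M \setminus U_2$, so the single-step inclusions bootstrap to $a^{n}(M\setminus U_2)\subset U_1$ and $a^{-n}(M \setminus U_1)\subset U_2$ for all $n \geq 1$ (and likewise for $b$). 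The ping-pong lemma then shows that every nontrivial reduced word in $a,b$ moves a suitable point between the disjoint regions, hence acts nontrivially, so $\langle a,b\rangle$ is free of rank two; in particular $G$ is not amenable.

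Finally, for (3), let $R \trianglelefteq G$ be the amenable radical. Since $R$ is amenable, the set $P_R \subset P(M)$ of $R$-invariant measures is nonempty; it is also compact, convex, and $G$-invariant because $R$ is normal (if $\mu$ is $R$-invariant and $g \in G$ then $g_*\mu$ is $gRg^{-1}=R$-invariant). Picking $\mu \in P_R$ and invoking strong proximality from (1), the net $g_{i*}\mu$ converges to some $\delta_z$; as $P_R$ is closed and $G$-invariant, $\delta_z \in P_R$, so $z$ is fixed by $R$. The set of $R$-fixed points in $M$ is closed, $G$-invariant (again by normality), and now nonempty, hence equals $M$ by minimality. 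Therefore $R$ acts trivially on $M$, recovering \cite[Corollary 8.3]{DM-16}.

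I expect the main obstacle to be the measure-concentration step in (1): everything else is essentially formal once strong proximality is in hand, so the crux is to exploit the infinitude of $M$ to manufacture a proper closed set of mass arbitrarily close to $1$ and then contract it into an arbitrarily small target via extreme proximality. The ping-pong construction in (2) is the most hands-on, but the only genuine point to verify is that pairwise disjointness of the four regions upgrades the single-step inclusions to all positive powers, after which the standard lemma applies verbatim.
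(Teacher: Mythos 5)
Your proposal is correct, but it is considerably more self-contained than the paper's proof: for parts (1) and (2) the paper simply cites Glasner's 1974 result that extreme proximality implies strong proximality and that a nontrivial extremely proximal action forces a free subgroup of rank two, whereas you reprove both facts from scratch --- the measure-concentration argument for (1) and the ping-pong construction for (2) are exactly the arguments behind the cited results, and both of your verifications (the complement identity $a(M\setminus U_2)\subset U_1 \Leftrightarrow a^{-1}(M\setminus U_1)\subset U_2$, and the bootstrap of single-step inclusions to all powers via $U_1\subset M\setminus U_2$) check out. Your part (3) is essentially the paper's proof: both pass from an $R$-invariant measure to a point mass via strong proximality, use normality of $R$ to keep $R$-invariance along the way, and conclude by minimality of $M$; your packaging via the closed convex $G$-invariant set $P_R$ and the closed $G$-invariant fixed-point set is just a cleaner phrasing of the paper's ``each translate $g_i\mu$ and the limit $\delta_y$ are $R$-invariant, hence every $gy$ is $R$-fixed.'' One small slip worth repairing in (1): an infinite metrizable compactum need not be uncountable (e.g.\ a convergent sequence), so ``atoms are countable'' does not by itself yield a point $p$ with $\mu(\{p\})=0$. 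You must invoke the fact that $M$, being an infinite \emph{minimal} set, has no isolated points and is therefore perfect, hence uncountable --- or, alternatively, just choose $p$ with $\mu(\{p\})<\varepsilon$, which always exists when $M$ is infinite and suffices for the concentration argument. With that one-line fix the proof is complete; what your route buys is independence from \cite{Gl-74}, at the cost of a page of standard but nontrivial dynamics that the paper deliberately outsources.
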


\begin{proof}
It is shown in \cite{Gl-74} that extreme proximality implies strong proximality
and that a group admitting a nontrivial extremely proximal action has a free
subgroup on two generators (see also \cite{Gl-76}).
To prove part (3) consider the action of the amenable radical, say, $R \nor G$ on $M$.
This admits an invariant probability measure, say $\mu$.
By strong proximality there is a sequence $g_i \in G$ and a point $y \in M$
such that $\lim g_i \mu = \del_y$. As $R$ is a normal subgroup it follows that
each translate $g_i\mu$ as well as the limit $\del_y$ are all $R$-invariant measures.
Next we deduce similarly that every point mass $gy, \ g \in G$, is an $R$ fixed point,
and finally conclude that $R$ acts trivially on $M$.  
\end{proof}

\sk

\begin{prop}\label{prop}
Let $(G, X)$ be a metric tame dynamical system. 
\begin{enumerate}
\item
For any element $p \in E(G, X)$ the set $pX$ is an analytic,
hence universally measurable, subset of $X$.
\item
For every $p \in E(G,X)$ and any probability measure $\mu$ on $X$ we have
$\mu(pX)=1$.
\end{enumerate}
\end{prop}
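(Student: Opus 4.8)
Since $(G,X)$ is metric and tame, the enveloping semigroup characterization recalled in the introduction (\cite{GMU,GM-survey}) shows that every $p\in E(G,X)$ is a Baire class $1$, hence Borel, map $p\colon X\to X$. The plan for (1) is then purely descriptive set-theoretic. The graph $\Gamma_p=\{(x,p(x)):x\in X\}$ is a Borel subset of the Polish space $X\times X$, and $pX$ is its projection onto the second coordinate; since a projection of a Borel subset of a Polish space is analytic, and analytic sets are universally measurable, (1) follows.

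\textbf{Part (2): the pushforward engine.} The first move is to replace the net defining $p$ by a sequence. As $(G,X)$ is tame and metrizable, $E(G,X)$ is a Rosenthal compactum, hence angelic by the Bourgain--Fremlin--Talagrand theorem (\cite{GM-HNS,GM-survey}); therefore $p$ is the pointwise limit of a \emph{sequence} $g_n\in G$, so that $g_nx\to p(x)$ for every $x\in X$. For each $f\in C(X)$ the functions $f\circ g_n$ are uniformly bounded and converge pointwise to $f\circ p$, so the bounded convergence theorem (legitimate now that we deal with a sequence) gives
$$
\int f\,d\big((g_n)_*\mu\big)=\int (f\circ g_n)\,d\mu\ \longrightarrow\ \int (f\circ p)\,d\mu=\int f\,d(p_*\mu).
$$
Hence $(g_n)_*\mu\to p_*\mu$ weak$^*$; as $X$ is compact no mass escapes and $p_*\mu$ is again a probability measure. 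Because $p^{-1}(X\setminus pX)=\emptyset$, the pushforward is carried by the image: $p_*\mu(pX)=\mu(p^{-1}(pX))=\mu(X)=1$.

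\textbf{Part (2): reaching $\mu(pX)=1$, and the main obstacle.} It remains to transfer this from $p_*\mu$ to $\mu$ itself, i.e. to show that the co-analytic (so, by (1), universally measurable) set $A:=X\setminus pX$ is $\mu$-null. The natural plan is to exploit the invertibility of the approximants: for $y\in A$ each homeomorphism $g_n$ is onto, so there is a unique $x_n:=g_n^{-1}(y)$, and by compactness a subsequence gives $x_{n_k}\to x^\ast$ with $g_{n_k}(x_{n_k})=y$ for all $k$; one would like to deduce $p(x^\ast)=y$, contradicting $y\notin pX$. The hard part will be exactly this deduction: the evaluation map $(g,x)\mapsto gx$ is \emph{not} jointly continuous along a Baire class $1$ limit, so $g_{n_k}(x_{n_k})$ need not converge to $p(x^\ast)$, and the sought contradiction does not come for free. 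My plan to overcome this is to argue measure-theoretically rather than pointwise: track the preimage masses $\mu(g_n^{-1}(A))$ using the weak$^*$ convergence of $(g_n^{-1})_*\mu$ (obtained, again, via angelicity and bounded convergence) together with the semigroup structure of $E(G,X)$, so as to rule out that a positive amount of $\mu$-mass is carried by the non-attained part $A$. This transfer step, and not the soft measurability and pushforward computations, is where the whole weight of the proposition lies.
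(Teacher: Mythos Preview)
Your Part (1) is correct and is exactly the paper's argument.

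For Part (2), your ``pushforward engine'' is also what the paper does: pass to a sequence $g_n\to p$ (angelicity of $E(G,X)$), apply bounded convergence to get $\int (f\circ g_n)\,d\mu \to \int (f\circ p)\,d\mu$, and observe $p_*\mu(pX)=1$. Where you part ways is in the ``transfer'' to $\mu(pX)=1$. You correctly flag this as the crux, but your plan cannot succeed: the statement for \emph{arbitrary} probability measures is simply false. The paper itself later exhibits, in any dendrite system without finite orbits, minimal idempotents $u\in E(G,X)$ with $uX=\{x\}$ a singleton; for any non-atomic $\mu$ one then has $\mu(uX)=\mu(\{x\})=0$, not $1$. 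So no measure-theoretic tracking of $g_n^{-1}$ will close the gap.

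What the paper's proof tacitly uses is $G$-invariance of $\mu$: the first equality in its displayed chain, $\int f\,d\mu=\lim\int (f\circ g_n)\,d\mu$, requires $(g_n)_*\mu=\mu$. With that hypothesis the transfer is immediate: running the chain over all $f\in C(X)$ gives $\mu=p_*\mu$, and then your own computation $p_*\mu(pX)=1$ yields $\mu(pX)=1$ at once. (Indeed, the parallel theorem in the section on amenable groups restates part (2) with $\mu\in P_G(X)$ and refers back to this proof.) So the missing idea is not a subtle estimate on $X\setminus pX$ but the invariance hypothesis, which collapses the step you identified as hard.
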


\begin{proof}
(1) 
The system $(G, X)$ being tame,
by the enveloping semigroup characterization of metric tame systems \cite{GMU}, 
 we have that every element $p \in E(G,X)$ is 
Baire class 1, hence 
 Borel measurable. 
It follows that the set $pX$ is an analytic, hence a universally measurable, subset of $X$.

(2)
As $(G,X)$ is tame there is a {\bf sequence} 
$g_n \in G$ with
$\lim g_n \to p$ in $E(G,X)$.
Given any function $f \in C(X)$ we have  
$\lim f (g_n x) = f(px)$ for every $x \in X$.
We also have that the function $f \circ p$ is Baire class 1,
hence Borel measurable.
By Lebesgue's bounded convergence theorem it follows that
$$
\int f\, d\mu  = \lim \int (f \circ g_n)\, d\mu =
 \int \lim (f \circ g_n)\, d\mu
  = \int (f \circ p)\, d\mu = 1.
 $$
Taking $f = 1_X$ we conclude that $\mu(pX)=1$.
\end{proof}

\sk 

The next theorem strengthens Theorem 10.1 of \cite{DM-16}.

\begin{thm}\label{SP}
Suppose $G$ acts on a dendrite $X$ with no finite orbits,
then the system $(G, X)$ is strongly proximal.
\end{thm}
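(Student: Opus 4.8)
The plan is to verify the measure-theoretic criterion for strong proximality: $(G,X)$ is strongly proximal precisely when, for every $\mu \in P(X)$, the weak-star orbit closure $\overline{G\mu}$ inside the space $P(X)$ of probability measures contains a point mass $\delta_z$. Since the action on the unique minimal set $M$ is already known to be strongly proximal (the Corollary above, via the extreme proximality of Theorem~\ref{EP}), the real task is to show that an arbitrary $\mu$ can first be pushed, \emph{within its orbit closure}, to a measure carried by $M$; the minimal case then finishes the argument.

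For the reduction I would use the enveloping semigroup $E=E(G,X)$, which is well-behaved because $(G,X)$ is tame by Theorem~\ref{thm-tame}. Choose a minimal idempotent $p\in E$, i.e. an idempotent lying in a minimal left ideal. A standard fact of topological dynamics is that then $px$ is an almost periodic point for every $x\in X$, so $\overline{G(px)}$ is a minimal set; by the uniqueness of the minimal set (Theorem~\ref{EP}(1)) this forces $px\in M$, whence $pX\subseteq M$. Because $M$ is closed and $p$ is Borel measurable (being Baire class $1$, as for any element of a tame enveloping semigroup), $p^{-1}(M)=X$, so the pushforward $\nu:=p_*\mu$ satisfies $\nu(M)=\mu(p^{-1}(M))=1$; that is, $\nu$ is carried by $M$.

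The point that requires care — and what I expect to be the main obstacle — is that $p$ is only Baire class $1$, not continuous, so I must justify that $\nu$ actually lies in $\overline{G\mu}$. Here tameness is used again: by the metric tame enveloping semigroup characterization (exactly as in the proof of Proposition~\ref{prop}) there is a \emph{sequence} $g_n\in G$ with $g_n\to p$ in $E$, hence $g_n x\to px$ for every $x$. For any $f\in C(X)$ the functions $x\mapsto f(g_n x)$ are uniformly bounded and converge pointwise to $f\circ p$, which is Borel, so Lebesgue's bounded convergence theorem yields $\int f\,d(g_n\mu)=\int f(g_n x)\,d\mu(x)\to\int f\circ p\,d\mu=\int f\,d\nu$. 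Thus $g_n\mu\to\nu$ in the weak-star topology and $\nu\in\overline{G\mu}$.

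Finally I would invoke the minimal case. Regarding $\nu$ as a probability measure on $M$, the strong proximality of $(G,M)$ (the Corollary above) gives a net $h_i\in G$ with $h_i\nu\to\delta_z$ for some $z\in M$. Since $\nu\in\overline{G\mu}$ and this set is closed and $G$-invariant, $\overline{G\nu}\subseteq\overline{G\mu}$, so $\delta_z\in\overline{G\mu}$. Therefore $\overline{G\mu}$ contains a point mass for every $\mu$, which is precisely the strong proximality of $(G,X)$.
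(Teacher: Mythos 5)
Your proof is correct, and it runs on the same engine as the paper's: tameness of $(G,X)$ (Theorem~\ref{thm-tame}) upgrades convergence in $E(G,X)$ to \emph{sequential} convergence $g_n \to p$, and then the bounded convergence theorem gives $(g_n)_*\mu \to p_*\mu$ weak-star, so pushforwards under (Baire class~$1$) enveloping semigroup elements stay inside $\overline{G\mu}$ --- exactly the mechanism of Proposition~\ref{prop}. Where you genuinely diverge is the endgame. The paper extracts from Theorem~\ref{EP} the finer semigroup fact that $E(G,X)$ has a unique minimal ideal and that $uX$ is a \emph{singleton} for every minimal idempotent $u$ (this rests on the proximality and extreme proximality statements), so a single application of the convergence mechanism gives $(g_n)_*\mu \to u_*\mu = \delta_x$ in one stroke. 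You instead use only the weaker, standard fact $pX \subseteq M$ for a minimal idempotent $p$ (almost periodicity of $px$ plus uniqueness of the minimal set --- and you are right that the ``no finite orbits'' hypothesis is what makes the $M$ of Theorem~\ref{EP}(1) the unique minimal set, since it rules out finite minimal sets), land $\nu = p_*\mu$ on $M$, and then finish with the already-established strong proximality of $(G,M)$ and the inclusion $\overline{G\nu} \subseteq \overline{G\mu}$. Your two-step route needs less enveloping-semigroup structure (no collapsing idempotents) and in fact proves a general principle: a tame metric system with a unique minimal set is strongly proximal whenever its minimal subsystem is. It also avoids citing Proposition~\ref{prop}(2) for a non-invariant $\mu$ --- as literally stated, the first equality in that proposition's proof uses $G$-invariance of $\mu$ --- by isolating exactly the part of that argument ($(g_n)_*\mu \to p_*\mu$) that is valid for arbitrary $\mu$. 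The paper's one-step collapse is shorter once the structure of $E(G,X)$ supplied by Theorem~\ref{EP} is in hand.
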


\begin{proof}
It follows from Theorem \ref{EP} that $E(G, X)$ has a unique minimal ideal $I \subset E(G, X)$
and that $uX$ is a singleton for every minimal idempotent $u \in I$.
And conversely, for every $x \in M$ there is a minimal idempotent $u \in I$
with $uX=\{x\}$.
Applying part (2) of Proposition \ref{prop} 
and Theorem \ref{thm-tame} 
 to $u$ and denoting $uX =\{x\}$, we have $\delta_x(uX) =1$,
hence $u_*\mu = \delta_x$.
\end{proof}

\begin{examples} \ 
	\begin{enumerate}
		\item For each one of Wa\.{z}ewski's universal dendrites $X= D_n$, $n=3,4,\dots,\infty$ (\cite{Wa-23}) the dynamical system $(H(X), X)$, where $H(X)$ is the group of homeomorphisms of $X$, is minimal, tame and extremely, hence also strongly, proximal. 
		Recently Kwiatkowska \cite{Kw} gave a description of the universal minimal $G$-system $M(G)$ for the topological group $G=H(X)$. 
		It would be interesting to check if $M(G)$ as a dynamical $G$-system is tame. 
		This space $M(G)$ is not a dendrite (being zero-dimensional) but perhaps 
		it admits some suitable betweenness relation.  
		
			\item Let $T$ be the $\R$-tree built on the Cayley graph of the free group on two generators $F_2$
			with $S = \{a, b, a^{-1}, b^{-1}\}$ as a set of generators.
			Let $X = T \cup Y$ be the natural compactification of $T$ obtained by adding the
			boundary $Y$ comprising the infinite reduced words on the generators $\{a, b, a^{-1}, b^{-1}\}$. 
			Then $X$ is a dendrite
			 and the corresponding dynamical system $(F_2, X)$  is tame, with $Y$ as its unique minimal subset,
			and the system $(F_2, Y)$ is extremely proximal.

				\item Let $T$ be the $\R$-tree built on the increasing array of finite groups $\Z/ 2^k\Z$
				and let $Y = \lim_{\leftarrow}\Z/ 2^k\Z$ be the inverse limit of this array
				which can be identified with the dyadic adding machine.
				Let $X = T \cup Y$ be the corresponding compactification of $T$.
				The dynamical system $(\Z, X)$ is a $\Z$-action on a
				 dendrite, 
				 hence tame, with the (equicontinuous) adding machine $Y$
				as its unique minimal subset.
	\end{enumerate} 
\end{examples}

\br

\section{Tame actions of Amenable groups on dendrites}\label{discrete-spectrum}

Our starting point is the structure theorem for minimal metric tame dynamical systems
of amenable groups \cite{Gl-18} (see also \cite{H}, \cite{KL} and \cite{Gl-str}).

\begin{thm}\label{amen}
Let $\Ga$ be any group and $(G, X)$ a metric tame minimal system that
admits an invariant probability measure $\mu$.
Then 
$X$ is almost automorphic,
i.e. it has the structure $X \overset{\iota}{\to} Z$, where $Z$ is equicontinuous and $\iota$
is an almost one-to-one extension. Moreover, $\mu$ is unique and the map $\iota$ is a measure theoretical isomorphism $\iota : (X, \mu, \Ga) \to (Z, \la, \Ga)$, where  $\la$ is the Haar measure on the 
homogeneous space $Z$.

Thus, when $\Ga$ is amenable, since every $G$-system admits an invariant probability measure, 
the claim above holds for any minimal metric tame $G$-system.
\end{thm}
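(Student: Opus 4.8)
The plan is to play tameness (which constrains the enveloping semigroup $E(G,X)$) against the invariant measure $\mu$ (which will force unique ergodicity and almost one-to-oneness), with the maximal equicontinuous factor as the bridge. First I would fix the maximal equicontinuous factor $\pi \colon X \to Z$ of the minimal system $(G,X)$. Since $Z$ is minimal and equicontinuous it is isomorphic to a homogeneous space $K/H$ of a compact group, hence uniquely ergodic, so it carries a unique invariant (Haar) measure $\la$ of full support; as $\pi$ is a factor map we must have $\pi_*\mu = \la$. The entire statement then reduces to a single assertion: that $\pi$ is injective on a set which is simultaneously $\mu$-conull and residual in $X$. Injectivity on a $\mu$-conull set gives the measure isomorphism $\iota \colon (X,\mu) \to (Z,\la)$, and residual almost one-to-oneness gives the almost automorphy.

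Next I would extract the measure-theoretic content of tameness, exactly as in Proposition \ref{prop}. By the enveloping semigroup characterization of metric tame systems (\cite{GMU}), every $p \in E(G,X)$ is a Baire class $1$, hence Borel, self-map of $X$, and for each such $p$ there is a \emph{sequence} $g_n \in G$ with $g_n \to p$ pointwise. Since $\mu$ is $G$-invariant, $\int (f\circ g_n)\,d\mu = \int f\,d\mu$ for $f \in C(X)$, and Lebesgue's bounded convergence theorem gives $\int (f\circ p)\,d\mu = \int f\,d\mu$; thus every element of $E$ preserves $\mu$, and by Proposition \ref{prop} we also have $\mu(pX)=1$ for all $p \in E$.

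The structural core, which I expect to be the \emph{main obstacle}, is to show that the $\pi$-fibers are measure-theoretically trivial. The mechanism I would use is: choose a minimal left ideal $I \subseteq E$ and a minimal idempotent $u \in I$, so that $u$ collapses each proximal cell to a single point. The genuinely hard input, which is where the finer structure theory of tame minimal systems enters (the Bourgin--Fremlin--Talagrand dichotomy together with the Ellis--Namioka analysis of the enveloping semigroup, as developed in \cite{Gl-18}), is the identification of the measure-theoretic maximal equicontinuous factor with the topological one, i.e. that $\pi$-fibers coincide $\mu$-almost everywhere with proximal cells. Granting this, the disintegration $\mu = \int_Z \mu_z\,d\la(z)$ combined with $u_*\mu = \mu$ and $\mu(uX)=1$ forces each $\mu_z$ to be a point mass for $\la$-almost every $z$; equivalently $\pi$ is injective on a $\mu$-conull set, and since $\la$ has full support the set $\{z : |\pi^{-1}(z)|=1\}$ is also residual in $Z$. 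This yields both almost automorphy and the isomorphism $\iota$.

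Finally, unique ergodicity follows by re-running the same disintegration argument for an arbitrary invariant probability measure $\nu$: it too projects to $\la$ and is carried by the ($\la$-a.e. singleton) $\pi$-fibers, so $\nu = \mu$. The closing assertion of the theorem is then immediate: when $\Ga$ is amenable every $\Ga$-system admits an invariant probability measure, so the hypothesis is automatically met and the structure result applies to every minimal metric tame $\Ga$-system.
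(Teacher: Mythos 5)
You have identified a genuine issue to be aware of at the outset: the paper does not prove Theorem \ref{amen} at all — it is imported from \cite{Gl-18} (with precursors \cite{H}, \cite{Gl-str}, \cite{KL}) — so the only benchmark is the proof in \cite{Gl-18}. Your outline handles the easy bookkeeping correctly: $\pi_*\mu=\lambda$ by unique ergodicity of the equicontinuous factor; every $p\in E(G,X)$ is Baire class~1 and is the pointwise limit of a \emph{sequence} $g_n\in G$ (legitimate, since by \cite{GMU} the enveloping semigroup of a metric tame system is Rosenthal compact, hence Fr\'echet--Urysohn); $p$ preserves $\mu$ by bounded convergence; and $\mu(pX)=1$ — all of which matches Proposition \ref{prop}. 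But the step you yourself label the ``genuinely hard input'' — that the $\pi$-fibers coincide $\mu$-a.e.\ with proximal cells, i.e.\ that the topological maximal equicontinuous factor realizes the measure-theoretic one — is not an auxiliary lemma available to quote from \cite{Gl-18}: it is, in substance, the theorem itself, since that identification (together with the upgrade to almost one-to-oneness) is precisely what \cite{Gl-18} proves. Granting it and deducing the rest, including unique ergodicity, is circular; as a blind proof the proposal is empty at its core.

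Two concrete mathematical errors compound this. First, a fixed minimal idempotent $u\in I$ does not ``collapse each proximal cell to a single point'': by Auslander's lemma, for each proximal pair $(x,y)$ there is \emph{some} minimal left ideal $I_{x,y}$ with $px=py$ for all $p\in I_{x,y}$, but the ideal depends on the pair, and a fixed $u$ identifies only the pairs proximal along its own ideal (uniqueness of the minimal ideal is itself something to be established here, not assumed; compare the dendrite-specific argument in the proof of Theorem \ref{SP}). Second, and more seriously, your passage from ``$\mu_z$ is a point mass for $\lambda$-a.e.\ $z$'' to ``$\{z: |\pi^{-1}(z)|=1\}$ is residual in $Z$'' is invalid: measure-theoretic triviality of the fibers yields the isomorphism $\iota$, but it does \emph{not} imply that any fiber is a topological singleton. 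There exist minimal uniquely ergodic systems whose maximal equicontinuous factor map is a measure isomorphism (so-called isomorphic extensions) yet which are not almost automorphic; full support of $\lambda$ does not rescue the inference, because the conull set on which $\mu_z$ is a point mass need not contain a single $z$ with singleton fiber. Bridging from the measure statement to the topological almost one-to-one statement is exactly where tameness must be used topologically — in \cite{Gl-18} this is done through the Baire class~1 elements of $E(G,X)$ and their residual sets of continuity points — whereas your proposal invokes tameness only for the measure bookkeeping. Consequently the almost automorphy assertion, which is the heart of Theorem \ref{amen}, remains unproved in your outline.
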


\vspace{.5cm}

\begin{thm}
Let $(G, X)$ be a tame dynamical system.
Let $E = E(G, X)$ denote its enveloping semigroup
and $I \subset E$ be any minimal left ideal in $E$.
Let $\mathcal{M}$ be the collection of minimal subsets of $X$ and set
$$
\tilde{M} = \bigcup \{M \subset X : 
M \in \mathcal{M}\}.
$$
Let $P_G(X)$ be the space of $G$-invariant probability measures on $X$.
Then
\begin{enumerate}
\item
For any element $p \in E$ the set $pX$ is an analytic,
hence universally measurable, subset of $X$.
\item
$\mu(pX)=1$ for every $\mu \in P_G(X)$ and $p \in E$.
\item
For any element $p \in I$ the set $pX$ is contained in $\tilde{M}$.
\item
If $\mu \in P_G(X)$ is ergodic then there exists $M \in \mathcal{M}$
with $\mu(M)=1$. Thus the minimal subsystem $(M,\mu,G)$ 
is uniquely ergodic and satisfies the conclusions of Theorem \ref{amen}.
\item
For any $\mu \in P_G(X)$
the measure dynamical system $(X,\mu,G)$ has discrete spectrum;
i.e. $L_2(\mu)$ is spanned by the collection of matrix
coefficients of finite dimensional unitary representations of $G$.
\end{enumerate}
\end{thm}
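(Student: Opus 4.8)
The plan is to dispatch the five assertions in order, the first three being essentially soft (two are already recorded in Proposition \ref{prop}, one is algebra in the enveloping semigroup), and the last two carrying the real content. Throughout I work in the metric setting, so that $E=E(G,X)$ is a Rosenthal (Fréchet) compactum and every $p\in E$ is Baire class $1$. Assertion (1) is literally Proposition \ref{prop}(1): $p$ is Borel, so $pX$ is the image of the Polish space $X$ under a Borel map, hence analytic and universally measurable. For (2) I fix $\mu\in P_G(X)$ and $p\in E$ and choose, using the Fréchet property of $E$, a \emph{sequence} $g_n\to p$; then $f\circ g_n\to f\circ p$ pointwise for every $f\in C(X)$, and combining the $G$-invariance of $\mu$ with the bounded convergence theorem gives $\int f\,d\mu=\int(f\circ g_n)\,d\mu\to\int(f\circ p)\,d\mu$, so $p_*\mu=\mu$. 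Since $p^{-1}(pX)=X$, this yields $\mu(pX)=p_*\mu(pX)=\mu(X)=1$.

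For (3) I would use two standard facts about $E$: that $\overline{Gx}=Ex$ for every $x\in X$, and that a minimal left ideal $I$ satisfies $Eq=I$ for every $q\in I$ (indeed $Eq$ is a closed left ideal with $\emptyset\neq Eq\subseteq I$, hence $Eq=I$ by minimality). Fix $p\in I$ and $x\in X$. Then $\overline{G(px)}=E(px)=(Ep)x=Ix$, and for any $y=qx\in Ix$ with $q\in I$ likewise $\overline{Gy}=(Eq)x=Ix$; thus every point of $Ix$ has orbit closure $Ix$, so $Ix$ is a minimal set. Hence $px\in Ix\subseteq\tilde{M}$, and as $x$ was arbitrary, $pX\subseteq\tilde{M}$.

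For (4), let $\mu$ be ergodic and set $S=\operatorname{supp}\mu$, a closed $G$-invariant set. Choosing $p\in I$, parts (2) and (3) give a universally measurable full-measure set $pX\subseteq\tilde{M}$, so for $\mu$-a.e.\ $x$ the orbit closure $\overline{Gx}$ is minimal. On the other hand ergodicity forces $\overline{Gx}=S$ for $\mu$-a.e.\ $x$: for every basic open $U$ meeting $S$ the invariant open set $\bigcup_g gU$ has measure $\geq\mu(U)>0$, hence measure $1$, so $\mu$-a.e.\ point enters each such $U$ and therefore has dense orbit in $S$. Comparing the two almost-everywhere statements shows that $S$ coincides with a minimal orbit closure, so $M:=S$ is a minimal set with $\mu(M)=1$. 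Then $(M,\mu,G)$ is a minimal metric tame system carrying an invariant measure, and Theorem \ref{amen} yields that it is uniquely ergodic and almost automorphic over an equicontinuous homogeneous system.

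The hard part will be (5), and it is where tameness must be used \emph{directly}: discrete spectrum is not inherited from the ergodic components, so one cannot simply integrate part (4) over the ergodic decomposition (a measurable field of rotations by varying angles is the standard obstruction, and it is exactly such fields that tameness forbids). Instead I would prove that for the Koopman representation $U_gf=f\circ g$ on $L^2(\mu)$ every vector is almost periodic. For $f\in C(X)$ the family $fG$ contains no independent sequence by tameness; by Rosenthal's $\ell_1$-dichotomy \cite{Ros0} every sequence $f\circ g_n$ has a pointwise convergent subsequence, and since these functions are uniformly bounded and $\mu$ is finite, the bounded convergence theorem upgrades this to convergence in $L^2(\mu)$. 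Hence the orbit of each $f\in C(X)$ is relatively norm-compact. The almost periodic vectors form a closed $G$-invariant subspace containing the dense subspace $C(X)$, so they exhaust $L^2(\mu)$. Finally, a unitary representation all of whose orbits are relatively norm-compact is a Hilbert direct sum of finite-dimensional invariant subspaces (the strong-operator closure of $U(G)$ is then a compact group and Peter--Weyl applies), which is precisely the statement that $(X,\mu,G)$ has discrete spectrum.
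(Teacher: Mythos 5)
Your proposal is correct, and on claims (1)--(3) it essentially coincides with the paper: (1) and (2) are exactly Proposition \ref{prop} (your derivation of $\mu(pX)=1$ from $p_*\mu=\mu$ together with $p^{-1}(pX)=X$ is in fact cleaner than the paper's closing step ``taking $f=1_X$'', which as literally written yields nothing since $1_X\circ p=1_X$; one should still note that passing from equality of $\mu$ and $p_*\mu$ on Borel sets to the analytic set $pX$ uses universal measurability and a Borel hull), and for (3), which the paper dismisses as ``clear'', you supply the intended semigroup algebra: $Ep=I$ by minimality, so $\overline{G(px)}=E(px)=(Ep)x=Ix$ and every point of $Ix$ has orbit closure $Ix$. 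The genuine divergences are in (4) and (5). For (4) the paper picks a $\mu$-generic point $x\in pX$ and sets $M=\overline{Gx}$, with a footnote explaining what genericity means for non-amenable $G$ via a Markov operator; you instead intersect two full-measure sets --- points with minimal orbit closure (from (2) and (3)) and points whose orbit closure equals $\operatorname{supp}\mu$ (ergodicity applied to $\bigcup_g g^{-1}U$ over a countable base) --- which avoids generic points and that footnote entirely, at the mild cost of invoking second countability, which the metric setting provides anyway. For (5) the routes differ substantially: the paper constructs the continuous semigroup representation $V\colon p\mapsto V_p$ of $E$ on $L_2(\mu)$, uses $\mu(uX)=1$ for an idempotent $u\in I$ to obtain the a.e.\ identifications showing $\{V_p : p\in E\}=\{V_{up} : p\in I\}$, observes that $uI$ is a group so this set is a compact group of unitaries containing the closed Koopman group, and finishes with Peter--Weyl; you instead argue directly from tameness via Rosenthal's dichotomy that every sequence in $fG$, $f\in C(X)$, has a pointwise convergent subsequence, upgrade to $L_2(\mu)$-norm convergence by bounded convergence, conclude that every Koopman orbit is relatively norm-compact, and then apply the standard fact that an almost periodic unitary representation has strong-operator closure a compact group of unitaries, with Peter--Weyl again finishing. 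Your route is more elementary --- it bypasses the minimal-ideal and idempotent structure of $E$ and the a.e.\ identities $V_p=V_{up}$ --- and it correctly isolates why one cannot simply integrate (4) over the ergodic decomposition; what the paper's argument buys is finer structural information, namely an explicit description of the closure of the Koopman group as the homomorphic image of the group $uI\subset E$, in keeping with the enveloping-semigroup viewpoint of the rest of the section.
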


\begin{proof}

For claims (1) and (2) repeat the proofs of the corresponding claims in Proposition \ref{prop}.

Claim (3) is clear.

(4) 
Suppose $\mu \in P_G(X)$ is ergodic. Let $p$ be any 
element of a minimal ideal in $E(G, X)$.
By (2) and (3) $\mu(pX)=1$ and we can find a $\mu$-generic point
$x \in pX$. Let $M =\overline{Gx} \in \mathcal{M}$
\footnote{
For an amenable group $G$ this has the usual meaning. When $G$ is
not amenable we can still consider
a probability measure $m$ on the discrete group $G$
such that $\mu(\{g\}) > 0$ for every $g \in G$, and then find a generic point
for the Markov operator on $L_1(\mu)$ defined by $K f = m * f$.
}. 
Then clearly
$\mu(M) =1$.
As the subsystem $(G, M)$ is minimal and tame, it follows 
from Theorem \ref{amen}
that it is uniquely ergodic and satisfies the conclusion of 
that theorem. 

(5)
Let $\mu \in P_G(X)$ be any invariant measure.
As in the proof of part (2) we see that the map
$V_p : f \mapsto f \circ p$ is a linear operator on $L_2(\mu)$
of norm $\le 1$. 
Since $\ch \circ p = \ch$ it follows that $\|V_p\| =1$.
Moreover, the map $V : p \mapsto V_p$ is
a continuous semigroup homomorphism from $E$ into the 
semigroup of linear contractions of $L_2(\mu)$ equipped with its strong operator topology.
Let $u$ be an idempotent in $I$. Then by (2) $\mu(uX)=1$
and for every $x \in uX$ and every $p \in E$ we have $px = (pu)x$.
It follows that the image of $E$ under $V$ coincides with the image
of $I$, $\{V_p : p \in E\} = \{V_p : p \in I\}$.
Moreover, for every $p \in I$ we have $\mu(pX \cap uX) =1$,
hence $px = (up)x$ $\mu$-a.e. Thus $V_p = V_{up}$ and we 
conclude that $\{V_p : p \in E\} = \{V_{up} : p \in I\}$.
Now $uI$ is a {\bf group} and it follows that this image,
which is also the closure of the Koopman group $\{V_g : g \in G\}$,
is a compact group of unitary operators. 
The Peter-Weyl theorem completes the proof.
\end{proof}

Now using results of this section and Theorem \ref{thm-tame} we get  

\begin{cor}
Suppose an amenable group $G$ acts on a dendrite $X$, then
\begin{enumerate}
\item
Each minimal subset  $M \subset X$ is as described in Theorem \ref{amen}
(but see Theorem \ref{SY} below).
\item
Thus an ergodic invariant probability measure on $X$ is either 
 a uniform distribution on a finite set, or it is the 
uniqeuly ergodic measure on a minimal infinite almost automorphic $M \subset X$.
\end{enumerate}
\end{cor}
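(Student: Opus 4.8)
The plan is to assemble the corollary from three ingredients that are already in place: the tameness of any continuous action on a dendrite (Theorem \ref{thm-tame}), the structure theorem for minimal metric tame systems of amenable groups (Theorem \ref{amen}), and the ergodic-decomposition statement given as part (4) of the structure theorem immediately preceding this corollary. The first observation is that, a dendrite being by definition a metrizable dendron, Theorem \ref{thm-tame} shows $(G,X)$ to be a \emph{metric} tame dynamical system; this is precisely the setting in which the two structural results were stated, so no extra hypotheses need be verified.

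For part (1) I would fix a minimal subset $M \subseteq X$ and pass to the subsystem $(G,M)$. I would first note that tameness is inherited by closed invariant subsystems (an independent sequence for $\{f|_M g\}$ on $M$ would already be independent for $\{fg\}$ on $X$), so $(G,M)$ is again metric and tame. Since $G$ is amenable, the compact $G$-space $M$ carries a $G$-invariant probability measure, and therefore $(G,M)$ is a minimal metric tame system admitting an invariant measure. Theorem \ref{amen} then applies verbatim and yields the almost automorphic tower $M \overset{\iota}{\to} Z$ with $Z$ equicontinuous and $\iota$ almost one-to-one, which is exactly the asserted description; when $M$ is finite this is simply the trivial tower with $Z = M$.

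For part (2) I would begin with an ergodic $\mu \in P_G(X)$ and invoke part (4) of the preceding theorem: there is a minimal set $M \in \mathcal{M}$ with $\mu(M) = 1$, and the subsystem $(M,\mu,G)$ is uniquely ergodic. The conclusion then follows by splitting on the cardinality of $M$. If $M$ is finite it is a single finite $G$-orbit on which $G$ acts by permutations, so its unique invariant measure gives each point equal mass and $\mu$ is the uniform distribution on a finite set. If $M$ is infinite, then by part (1) it is almost automorphic, and $\mu$, being carried by $M$ and the unique invariant measure there, is precisely the uniquely ergodic measure on this minimal infinite almost automorphic set.

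The whole argument is largely bookkeeping: the genuine content sits in Theorems \ref{thm-tame} and \ref{amen} and in part (4) of the preceding theorem. The one step that would in principle require real work is the reduction in part (2)---that an arbitrary ergodic measure on the dendrite concentrates on a single minimal set---but this is delivered wholesale by part (4), so all that remains to check by hand is the finite/infinite dichotomy and the identification of the finite case with the normalized counting measure on an orbit, both of which are routine.
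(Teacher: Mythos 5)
Your proposal is correct and follows exactly the route the paper intends: the paper gives no separate proof, introducing the corollary only with ``Now using results of this section and Theorem \ref{thm-tame} we get,'' i.e.\ combining tameness of dendrite actions, Theorem \ref{amen}, and part (4) of the preceding theorem, which is precisely your assembly. Your added bookkeeping (tameness passing to closed invariant subsystems, amenability supplying the invariant measure on $M$, and the finite/infinite dichotomy identifying the finite case with normalized counting measure on an orbit) correctly fills in the details the paper leaves implicit.
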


\sk 

\begin{remarks} \label{r:Mobius} \ 
	\ben 
	\item 
 In \cite{HWY} the authors show that every tame cascade satisfies the ``M\"{o}bius disjointness conjecture". 
 Their proof is based on the fact that tame cascades have discrete spectrum.
 	\item 
In \cite{AAM} it was shown that every monotone cascade on a local dendrite satisfies the 
``M\"{o}bius disjointness conjecture". 
\item 
Theorem \ref{t:nullness} shows that (2) can be derived from (1), at least for every 
invertible cascade on a local dendrite.  
\een 
\end{remarks}


In the first version of our work, posted 26 June,  2018, on the Arxiv, we
posed the following question:

\begin{question}
Is there an amenable group $G$, an action of $G$ on a dendrite $X$,
and a minimal subset $Y \subset X$, such that the system $(G, Y)$ is almost
automorphic but not equicontinuous?
\end{question}

On 4 July, 2018, Shi and Ye provided a negative answer \cite{SY-18}:

\begin{thm}\label{SY}
Let $G$ be an amenable group acting on a dendrite $X$. 
Suppose $K$ is a minimal set in $X$. 
Then 
$(G,K)$ is equicontinuous, and $K$ is either finite or homeomorphic to the Cantor set.
\end{thm}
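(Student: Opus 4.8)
The plan is to combine two ingredients: the tameness of dendrite actions established in Theorem \ref{thm-tame} (via Corollary \ref{c:fragm}), and the structure theorem for amenable tame systems, Theorem \ref{amen}. Since $G$ is amenable acting on the dendrite $X$, the minimal subsystem $(G,K)$ is tame (tameness passes to subsystems, as $K$ is a closed $G$-invariant subset and the restriction of any function is still monotone-compatible). By amenability $K$ carries an invariant probability measure, so Theorem \ref{amen} already gives that $(G,K)$ is almost automorphic: there is an almost one-to-one extension $\iota\colon K \to Z$ over an equicontinuous system $Z$. The task is therefore to upgrade ``almost automorphic'' to ``equicontinuous,'' i.e. to show that the almost one-to-one factor map $\iota$ is in fact a homeomorphism.

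First I would exploit the special topology of dendrites. The key obstruction to $\iota$ being injective is the existence of a proximal pair $x\neq y$ in $K$ with $\iota(x)=\iota(y)$. I would argue that on a dendrite, distinct points of a minimal set cannot be proximal unless the action has finite orbits, contradicting minimality-plus-infiniteness. Concretely, if $x,y$ are distinct, the separating point $w \in [x,y]$ (which exists by the dendron separation axiom) splits $X$ into two open halves $U \ni x$, $V \ni y$. Monotonicity of every $g\in G$ forces $g[x,y]\subseteq [gx,gy]$, so the order structure along generalized arcs is preserved; I would use this rigidity, together with the retractions $r_{[x,y]}$ of Theorem \ref{r:Mill}, to show that a proximal collapse of $x$ and $y$ would have to collapse an entire arc's worth of separating points, which is incompatible with the equicontinuous (hence distal) factor $Z$ receiving a genuinely one-to-one image on a residual set. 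The cleanest route is: almost automorphic minimal systems are \emph{point-distal}, so the proximal cell of a generic point is trivial; if the whole system were not equicontinuous, there would be nontrivial proximal pairs somewhere, and on a dendrite I would derive that these proximal pairs propagate to the dense set of endpoints (using Lemma 4.4-type density of $End(X)$ in $M$ from \cite{DM-16}), forcing a separating-point structure inconsistent with distality of $Z$.

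Second, once $\iota$ is shown to be injective and hence a homeomorphism onto the equicontinuous $Z$, I would identify the homeomorphism type of $K$. An equicontinuous minimal system is a homogeneous space, so $K$ admits a transitive action of a compact group and is topologically homogeneous. A homogeneous subcontinuum (or subspace) of a dendrite must be very restricted: dendrites are hereditarily locally connected and one-dimensional with a dense set of branch/endpoints, and any infinite homogeneous closed subset that is nowhere containing a free arc is totally disconnected; I would conclude $K$ is either finite or a compact, perfect, totally disconnected metrizable space, hence the Cantor set. The finiteness alternative covers the case where the equicontinuous factor is a finite orbit.

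The hard part will be the injectivity step — ruling out nontrivial proximal pairs inside the minimal set and thereby promoting almost automorphy to genuine equicontinuity. This is exactly where the dendrite geometry (separating points, monotone rigidity of the generalized arcs, density of endpoints) must interact with the distality of the equicontinuous factor; the structure theorem alone gives only almost automorphy, and extra topological input specific to dendrites is needed to close the gap. I expect the homogeneity-to-Cantor-set conclusion to be comparatively routine given classical results on homogeneous subspaces of dendrites, whereas the proximality-exclusion argument is the genuine obstacle and is presumably the technical heart of Shi and Ye's proof in \cite{SY-18}.
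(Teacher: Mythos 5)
Your first step is sound and reproduces what the paper itself already establishes: tameness of $(G,X)$ via Theorem \ref{thm-tame}, passage to the minimal subsystem, amenability supplying an invariant measure, and Theorem \ref{amen} yielding almost automorphy of $(G,K)$ --- this is exactly the Corollary at the end of Section \ref{discrete-spectrum}. But be aware that the paper contains no proof of Theorem \ref{SY}: it is quoted from Shi and Ye \cite{SY-18} as the answer to the authors' own open question. Hence the entire content of the theorem, beyond what the Corollary gives, is the upgrade from ``almost automorphic'' to ``equicontinuous'' --- and that is precisely the step your proposal does not prove. You write that you ``would argue'' proximal pairs cannot exist, and you explicitly defer the argument to what is ``presumably the technical heart of Shi and Ye's proof''; that is an acknowledgment of the missing idea, not a substitute for it.

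Moreover, the tools you sketch for the missing step are drawn from the wrong regime. The density of endpoints in the minimal set (Lemma 4.4 of \cite{DM-16}, used in the paper's proof of Theorem \ref{EP}) is established under the hypothesis that $G$ acts with \emph{no finite orbits}; by the corollary following Theorem \ref{EP}, that hypothesis forces $G$ to contain a free subgroup on two generators, so $G$ cannot be amenable. For amenable $G$ a finite orbit always exists, the unique-minimal-set and extreme-proximality machinery of \cite{DM-16} is unavailable, and your proposed ``propagation of proximal pairs to the dense set of endpoints'' has no foundation. Your claim that ``distinct points of a minimal set cannot be proximal unless the action has finite orbits, contradicting minimality-plus-infiniteness'' also conflates two different assertions: in the amenable case $X$ \emph{does} have finite orbits while $K$ may nonetheless be infinite, so no contradiction arises. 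The topological endgame (an infinite homogeneous closed subset of a dendrite is perfect and totally disconnected, hence a Cantor set) is comparatively routine, as you say; but the distality/injectivity step is the whole theorem, and closing it requires genuinely new input --- Shi and Ye's argument exploits the finite orbit guaranteed by amenability --- which your sketch does not supply.
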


\br

\section{Cascades on dendrites}\label{casc}

Let $(T, X)$ be 
a cascade; i.e. a $\Z$-dynamical system where $T : X \to X$
is the homeomorphism of $X$ which corresponds to $1 \in \mathbb{Z}$.

We recall the following results of Naghmouchi \cite{Na-12}; 
in all of them $f: X \to X$ is a monotone dendrite map. 
The sets $P(f)$, $R(f)$ and $UR(f)$ denote, respectively, the
set of periodic points, recurrent points and uniformly recurrent points of $f$.
The set $\La(f)$ is the union of the $\om$-limit sets.

%

%

%
\begin{thm}\label{omega}
For any $x \in X$, we have:
 \begin{enumerate}
 \item
 $\om_f(x)$ is a minimal set. 
 \item
$\om_f(x) \subset \overline{P(f)}$.
 \end{enumerate}
\end{thm}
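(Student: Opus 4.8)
The plan is to derive both assertions from the tameness of the cascade $(\Z,X)$, which is immediate from Theorem \ref{thm-tame} since $X$ is a dendrite, together with the equicontinuity of its minimal subsets guaranteed by Theorem \ref{SY} and the monotone retractions supplied by Theorem \ref{r:Mill}. Throughout I write $f=T$ for the generating homeomorphism, and I fix $x\in X$, setting $\om:=\om_f(x)$, a nonempty closed $f$-invariant set.

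For part (1) I would realise a minimal subset of $\om$ dynamically. Using tameness, choose a minimal idempotent $u$ in the minimal ideal of $E(\Z,X)$ obtained as a limit of translations $f^{m}$ with $m\to\infty$; then $ux\in\om$ is almost periodic and proximal to $x$, and $M:=\ov{O(ux)}\subseteq\om$ is a minimal set, which by amenability of $\Z$ and Theorem \ref{SY} is either a finite periodic orbit or an infinite equicontinuous Cantor odometer. It remains to prove $\om=M$. For an arbitrary $y\in\om$, tameness yields a \emph{sequence} $f^{n_k}\to p$ in $E(\Z,X)$ with $px=y$; then $uy=(up)x$ again lies in $M$ and $y$ is proximal to $uy$. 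Thus every point of $\om$ is proximal to a point of $M$, and the entire content of minimality is to show that this proximal relation is trivial on $\om$, i.e. that $y=uy\in M$.

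This last collapse is exactly where the dendrite geometry, rather than mere tameness, must enter: since $M$ is equicontinuous (hence distal as an abstract minimal system) while proximal cells can a priori be nondegenerate, I would control the approach of the orbit to $M$ through the convex hull $C=\bigcup_{a,b\in M}[a,b]$, a $T$-invariant subdendron on which the canonical monotone retraction $r_C\colon X\to C$ of Theorem \ref{r:Mill} is a continuous $\Z$-factor map. The aim is to show, via the betweenness relation and the first-point description of $r_C$, that two points of $\om$ lying in the same proximal cell and mapping to the same point of $M$ must coincide, forcing $\om=M$. Establishing this rigidity is the main obstacle, and I expect it to require a careful first-point-map argument showing that $r_C(f^n x)$ converges into $M$, so that the orbit cannot oscillate between $M$ and faraway regions; tameness alone does not suffice, since orbit closures of tame systems may well be non-minimal.

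Granting (1), part (2) splits into two cases. If $\om=M$ is finite it consists of periodic points, whence $\om\subseteq P(f)\subseteq\ov{P(f)}$. In the remaining case $M$ is an infinite odometer and contains no periodic point, so one must produce periodic points accumulating on $M$ from outside: the connected components of $C\setminus M$ are arcs permuted by $T$, and a monotone homeomorphism of a $T^k$-invariant arc has a fixed point; the resulting periodic orbits approach $M$ as these arcs shrink toward it, giving $M\subseteq\ov{P(f)}$ after passing to closures. Both cases then yield $\om_f(x)\subseteq\ov{P(f)}$, completing Theorem \ref{omega}.
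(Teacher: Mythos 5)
There is no proof in the paper to match your argument against: Theorem \ref{omega} is explicitly \emph{recalled} from Naghmouchi \cite{Na-12}, and it is stated there for an arbitrary \emph{monotone dendrite map} $f$, not only for homeomorphisms. By writing ``$f=T$ for the generating homeomorphism'' you restrict to the invertible case from the outset, so even a complete argument along your lines would prove a weaker statement than the one asserted. Also, invoking Theorem \ref{SY} is both inessential to what you actually use and dialectically risky: the Shi--Ye equicontinuity theorem postdates, and results of that genre build on, the structure theory of minimal sets in dendrites to which Theorem \ref{omega} belongs.

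More importantly, both parts contain genuine gaps. In part (1), the enveloping-semigroup preliminaries (a minimal idempotent $u$, the minimal set $M=\ov{O(ux)}\subseteq \om_f(x)$, and the proximality of each $y\in\om_f(x)$ to $uy\in M$) are valid in \emph{any} compact metric system and use no dendrite geometry; the entire content of the theorem is the collapse $\om_f(x)=M$, and precisely there you write that ``establishing this rigidity is the main obstacle,'' offering only an uncarried-out sketch via the retraction $r_C$ onto the hull $C=\bigcup_{a,b\in M}[a,b]$ (whose closedness and $T$-invariance are themselves unverified). As you yourself note, tameness cannot supply this step, since tame systems can have non-minimal $\omega$-limit sets; so part (1) is simply not proved. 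In part (2), the infinite case rests on three unjustified claims: that the components of $C\setminus M$ are arcs, that each is $T^k$-invariant for some $k$, and that they shrink to $M$. A component may wander under $T$ and then yields no periodic point at all; components of the complement of a Cantor minimal set in a subdendrite need not be free arcs; and to get $M\subseteq\ov{P(f)}$ you must produce periodic points in \emph{every} neighborhood of \emph{every} point of $M$, which the sketch does not do. The correct route is the one in \cite{Na-12}, which argues directly from monotonicity and the dendrite structure rather than from tameness.
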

%
%
%
%
%
%
%
\begin{thm}\label{cantor}
For any $x \in X$, $\om_f(x)$ is either a finite set or a minimal Cantor set. 
In particular, f is not transitive.
\end{thm}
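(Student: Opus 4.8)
The plan is to read both assertions off the minimality of $\om$-limit sets established in Theorem \ref{omega}.1, combined with the Shi--Ye dichotomy of Theorem \ref{SY}. Fix $x \in X$ and put $M := \om_f(x)$. By Theorem \ref{omega}.1 the set $M$ is a minimal set of the system. Since the cascade is an action of the amenable group $\Z$ on the dendrite $X$, Theorem \ref{SY} applies to $M$ and shows that $M$ is either finite or homeomorphic to the Cantor set; together with the minimality of $M$ this is precisely the statement that $\om_f(x)$ is a finite set or a minimal Cantor set.

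For the non-transitivity clause I would argue by contradiction. Recall first that a nondegenerate dendrite $X$ is a metrizable continuum with more than one point, hence connected, infinite and perfect (it has no isolated points). Assume $f$ is transitive. For a continuous self-map of a compact metric space without isolated points, transitivity is equivalent to the existence of a point with dense forward orbit, so I would choose such a point $x$. Deleting a finite initial segment $\{x,f(x),\dots,f^{N-1}(x)\}$ from a dense set leaves a dense set (here perfectness is used), whence $\overline{\{f^n(x):n\ge N\}}=X$ for every $N$ and therefore $\om_f(x)=X$. By the first part $\om_f(x)$ is finite or homeomorphic to the Cantor set, both of which are totally disconnected; this contradicts the connectedness of $X$. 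Hence $f$ cannot be transitive.

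The main obstacle is to secure the hypotheses of Theorem \ref{SY}, which is stated for an action of an amenable group: one must present the cascade genuinely as a $\Z$-system, i.e.\ regard the monotone dendrite map $f$ as the homeomorphism $T$ generating the action, after which amenability of $\Z$ makes the theorem applicable to the minimal set $M$. A secondary, more routine point is the equivalence between transitivity and the existence of a point with dense forward orbit, together with the resulting identity $\om_f(x)=X$; this is standard for perfect compact metric spaces, but it should be spelled out, since it is exactly the step where the non-degeneracy (hence connectedness and perfectness) of the dendrite is used to reach the contradiction.
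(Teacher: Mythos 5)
The paper does not actually prove Theorem \ref{cantor}: it is one of the results recalled verbatim from Naghmouchi \cite{Na-12}, and crucially it is stated there for an arbitrary \emph{monotone dendrite map} $f: X \to X$, i.e.\ a continuous monotone map that need not be injective or surjective (the paper's remark ``of course every self homeomorphism of a dendrite is monotone'' makes clear that homeomorphisms are only a special case). This is exactly where your argument has a genuine gap. You correctly flagged as ``the main obstacle'' the need to present the system as a $\Z$-action, but you then resolved it by decree --- ``regard the monotone dendrite map $f$ as the homeomorphism $T$ generating the action.'' A monotone dendrite map is in general not invertible, so there is no $\Z$-action (indeed no group action at all) to which Theorem \ref{SY} could be applied: the Shi--Ye theorem \cite{SY-18} concerns amenable \emph{group} actions on dendrites, hence actions by homeomorphisms, and says nothing about minimal sets of non-invertible maps. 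Your proof therefore establishes only the homeomorphism case of the statement, not the theorem as stated; for the general case one needs Naghmouchi's own arguments in \cite{Na-12}, which is why the paper simply cites them.

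Within the invertible setting your reasoning is essentially sound: for a homeomorphism $T$, the set $\om_T(x)$ is minimal by Theorem \ref{omega}.1, satisfies $T(\om_T(x))=\om_T(x)$, hence is a minimal set of the $\Z$-system, and Theorem \ref{SY} yields the finite/Cantor dichotomy; your non-transitivity argument (transitivity on a compact metric space gives a point with dense forward orbit by Baire category, so $\om_f(x)=X$ since a nondegenerate dendrite is perfect, contradicting connectedness against the totally disconnected dichotomy) is correct and needs no invertibility once the dichotomy is in hand. Two further cautions: first, the paper's own augmented theorem in Section \ref{casc} uses Theorem \ref{cantor} as an \emph{input} (together with Theorems \ref{amen} and \ref{distal}) to show minimal sets are adding machines, so deriving \ref{cantor} from the chronologically later Theorem \ref{SY} inverts the paper's logical architecture (defensible only because \ref{SY} is an independent external result, a dependence you should verify rather than assume); second, the non-transitivity clause silently requires $X$ to be nondegenerate, which you did note but which should be made an explicit hypothesis or convention.
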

%
%
%
\begin{thm}\label{UR}
The following equalities hold
$$
UR(f) = R(f) = \La(f) = \overline{P(f)}.
$$
\end{thm}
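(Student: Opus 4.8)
The plan is to prove that the four sets coincide by closing the cyclic chain of inclusions $UR(f)\subseteq R(f)\subseteq \La(f)\subseteq \overline{P(f)}\subseteq UR(f)$, feeding the recalled Theorems \ref{omega} and \ref{cantor} into the two non-obvious links. The first two inclusions are formal: a uniformly recurrent point is recurrent, and a recurrent point $x$ lies in $\om_f(x)\subseteq \La(f)$. For the third I would use Theorem \ref{omega}.(1): since every $\om_f(x)$ is a \emph{minimal} set, each of its points is uniformly recurrent, so $\La(f)=\bigcup_x \om_f(x)\subseteq UR(f)$; together with the first two inclusions this already gives $UR(f)=R(f)=\La(f)$. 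Theorem \ref{omega}.(2) then yields $\om_f(x)\subseteq \overline{P(f)}$ for every $x$, hence $\La(f)\subseteq \overline{P(f)}$.

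It remains to prove $\overline{P(f)}\subseteq \La(f)$. Since every periodic point is uniformly recurrent, $P(f)\subseteq UR(f)=\La(f)$, so the whole content of this inclusion is that $\La(f)$ (equivalently, by the equalities above, the recurrent set $R(f)$) is \emph{closed}; this is the main obstacle. The approach I would take is a hyperspace argument. Fix $y\in \overline{P(f)}$ and periodic points $p_k\to y$, and let $M_k=\om_f(p_k)$ be their finite orbits, which are minimal and, by Theorem \ref{cantor}, of the restricted finite-or-Cantor type shared by all limit sets. By compactness of the space of closed subsets of $X$ in the Hausdorff metric, pass to a subsequence with $M_k\to M$. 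Then $M$ is nonempty, closed, $f$-invariant (a Hausdorff limit of invariant sets, by continuity of $f$), satisfies $M\subseteq \overline{P(f)}$, and contains $y$ because $p_k\in M_k$ and $p_k\to y$.

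The crux is to show that every point of such a limit set $M$ is recurrent, so that $M\subseteq \La(f)$ and in particular $y\in \La(f)$; this is where I expect the genuine difficulty, and where the dendrite geometry must be used. The idea is that the monotonicity of $f$, combined with the canonical monotone retractions onto subcontinua (Theorem \ref{r:Mill}), should confine the periodic orbits $M_k$ accumulating at $y$ to small $f$-invariant subcontinua surrounding $y$, ruling out the only obstruction to recurrence of $y$, namely that the orbit of $y$ drifts into a minimal set disjoint from $y$. Once recurrence of every point of $\overline{P(f)}$ is secured we obtain $\overline{P(f)}\subseteq R(f)=\La(f)=UR(f)$, which closes the chain and proves all four equalities. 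Alternatively, the closedness of the recurrent set for monotone dendrite maps may simply be quoted from \cite{Na-12}, since Theorem \ref{UR} is recalled from that source.
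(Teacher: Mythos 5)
First, a point of comparison: the paper contains no proof of this theorem at all. It is stated verbatim as one of several results \emph{recalled} from Naghmouchi \cite{Na-12} (``We recall the following results of Naghmouchi \cite{Na-12}; in all of them $f: X \to X$ is a monotone dendrite map''). So your closing fallback --- simply quoting the result from \cite{Na-12} --- is in fact exactly what the paper does, and there is no internal argument to measure your attempt against.

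As an independent proof attempt, your formal reductions are correct and cleanly organized: $UR(f)\subseteq R(f)\subseteq \La(f)$ is immediate, $\La(f)\subseteq UR(f)$ follows from the minimality of every $\om_f(x)$ (Theorem \ref{omega}.(1), since a point whose forward orbit closure is minimal is uniformly recurrent), and $\La(f)\subseteq \overline{P(f)}$ is Theorem \ref{omega}.(2); since $P(f)\subseteq UR(f)$, everything indeed hinges on the single inclusion $\overline{P(f)}\subseteq \La(f)$. But your hyperspace argument for this crux has a genuine gap, and as stated it cannot work: every conclusion it actually establishes --- $M$ nonempty, closed, $f$-invariant, $y\in M\subseteq \overline{P(f)}$ --- holds in an arbitrary compact system with dense periodic points, for instance the full shift on two symbols, where nevertheless $\overline{P(f)}=X$ strictly contains $R(f)$. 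Invariance of a Hausdorff limit of periodic orbits is far weaker than minimality, and the step you yourself flag as ``where the genuine difficulty'' lies --- using monotonicity and the retractions of Theorem \ref{r:Mill} to trap the orbits $M_k$ in small invariant subcontinua around $y$ and thereby force recurrence of $y$ --- is precisely the substance of Naghmouchi's theorem and is left as a hope rather than an argument. (Your invocation of Theorem \ref{cantor} plays no role in the sketch either.) So what you have is an honest reduction plus a citation, which matches the paper's treatment of the statement, but not a self-contained proof.
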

%
%
%
\begin{thm}\label{distal}
The restriction map 
$f \rest R(f)$ is a distal homeomorphism.
\end{thm}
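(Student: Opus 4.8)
The plan is to show that $f\rest R(f)$ is distal by verifying directly that no two distinct points of $R(f)$ form a proximal pair, since distality of a homeomorphism is exactly the absence of nondiagonal proximal pairs (and $X$, being a dendrite, is metrizable, so it suffices to consider sequences $f^{n_i}$). First I would record the structure of $R(f)$ as a compact invariant set. By Theorem \ref{UR} we have $R(f)=\ol{P(f)}$, so $R(f)$ is closed and $f$-invariant and $f$ restricts to a homeomorphism of it. Moreover every recurrent point lies in a minimal set: if $x\in R(f)$ then $x\in\om_f(x)$, and since $\om_f(x)$ is minimal by Theorem \ref{omega} and is closed and invariant, we get $\ol{\{f^n x\}}=\om_f(x)=:M_x$, a minimal subsystem containing $x$. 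Thus $R(f)$ is precisely the union of the minimal subsets of $X$.

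The second ingredient is that each such minimal set is distal. Because $\Z$ is amenable and the cascade $(f,X)$ on the dendrite $X$ is tame (Theorem \ref{thm-tame}), Theorem \ref{SY} of Shi and Ye applies: every minimal set $M\subset X$ is equicontinuous (and is finite or a Cantor set). In particular each $M$ is distal, so its enveloping semigroup $E(f\rest M,M)$ is a group whose only idempotent is the identity $\mathrm{id}_M$.

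Now I would run the standard Ellis-semigroup argument on $E=E(f,X)$. Suppose $x,y\in R(f)$ are proximal; then there is $p$ in a minimal left ideal $I\subset E$ with $px=py$, and we may fix a minimal idempotent $u\in I$. For any minimal set $M$ the restriction map $E\to E(f\rest M,M)$, $q\mapsto q\rest M$, is a continuous semigroup epimorphism, so $u\rest M$ is an idempotent of the group $E(f\rest M,M)$ and hence equals $\mathrm{id}_M$; consequently $u$ fixes every recurrent point. Since $p\in I$ and $I$ is a minimal left ideal, $Ep$ is a nonempty left ideal contained in $I$, whence $Ep=I\ni u$, so there is $q\in E$ with $qp=u$. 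Applying $q$ to $px=py$ gives $x=ux=qpx=qpy=uy=y$. Therefore distinct recurrent points are never proximal, which is exactly the distality of $f\rest R(f)$.

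The main obstacle is the distality of the infinite minimal sets. Theorem \ref{cantor} already restricts each minimal set to be finite or Cantor, and finite (periodic) orbits are trivially distal; but an infinite Cantor minimal set is, a priori, only guaranteed to be almost automorphic by the structure theorem (Theorem \ref{amen}), and almost automorphic systems that fail to be equicontinuous do contain proximal pairs. It is precisely the equicontinuity supplied by Theorem \ref{SY} that upgrades ``almost automorphic'' to the distality required for the idempotent $u$ to fix recurrent points; once this is in place, the enveloping-semigroup manipulation above is routine.
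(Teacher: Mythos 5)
The first thing to note is that the paper contains no proof of this statement: Theorem \ref{distal} is quoted, together with Theorems \ref{omega}, \ref{cantor} and \ref{UR}, from Naghmouchi \cite{Na-12}, where it is proved by an elementary analysis of monotone dendrite maps. So there is no internal proof to compare yours against. Judged on its own terms, your Ellis-semigroup argument is correct \emph{in the case where $f$ is a homeomorphism}: $R(f)=\overline{P(f)}$ is closed and invariant and equals the union of the minimal sets (via Theorem \ref{omega}); each minimal set is equicontinuous by Theorem \ref{SY}, so every minimal idempotent $u\in E(f,X)$ restricts to the identity on each minimal set; and the computation $x=ux=qpx=qpy=uy=y$ with $qp=u$ correctly rules out nondiagonal proximal pairs in $R(f)$.

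The genuine gap is one of generality. The paper's preamble to this block of results stipulates that $f:X\to X$ is a \emph{monotone dendrite map}, i.e.\ a continuous monotone self-map that need not be invertible. For such $f$, a nontrivial part of the theorem's content is precisely that $f\rest R(f)$ \emph{is} a homeomorphism — injectivity of $f$ on $R(f)$ has to be proved — and your argument never addresses this; you assume invertibility from the outset. Moreover, all of your external inputs are statements about \emph{group} actions: Theorem \ref{thm-tame}, the structure theorem \ref{amen}, and Shi--Ye's Theorem \ref{SY} all concern $\Z$- or $G$-actions by homeomorphisms, and none applies to a noninvertible $f$ (Remark \ref{r:semigr} extends only the tameness statement to monotone monoid actions, not the structure or equicontinuity theorems). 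So your proof covers only the invertible special case, which happens to be the case the paper actually uses in Section \ref{casc}, but does not establish the theorem as stated. One further point worth flagging: your route inverts the paper's logical flow, since in the last theorem of Section \ref{casc} the paper combines Theorem \ref{distal} with Theorems \ref{amen} and \ref{cantor} to \emph{deduce} that minimal sets are equicontinuous adding machines, whereas you derive \ref{distal} from the equicontinuity theorem \ref{SY}. This is not formally circular — Shi--Ye's proof for general amenable groups does not pass through Naghmouchi's $\Z$-results — but it makes that later deduction redundant and rests on a much later and heavier theorem than the elementary 2012 proof the paper cites.
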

%
%
%
%

Of course every self homeomorphism $T : X \to X$ of a dendrite $X$ is monotone.

We can now augment some of Naghamouchi's results in \cite{Na-12} as follows
(see also \cite{MN-16} and \cite{MN-18}):

\begin{thm}
Let $T  : X \to X$ be a self homeomorphism of a dendrite $X$
and consider the $\Z$-system $(T, X)$.
\begin{enumerate}
\item
If $M \subset X$ is a minimal subset then it is either finite or an adding machine.
\item
The union of minimal sets 
$$
\tilde{M} = \bigcup \{M \subset X : M \ {\text {is minimal}}\}
$$
is closed.
\item
Every point in $X \setminus \tilde{M}$ is asymptotic to $\tilde{M}$;
i.e. $\om_T(x) \subset \tilde{M}$.
\end{enumerate}
\end{thm}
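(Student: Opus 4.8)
The plan is to reduce all three claims to results already in hand, after the single observation that a self-homeomorphism $T$ of a dendrite is automatically monotone, so that Naghmouchi's Theorems \ref{omega}--\ref{distal} apply verbatim to $f=T$; moreover the acting group $\Z$ is amenable, so that the Shi--Ye Theorem \ref{SY} (whose equicontinuity conclusion ultimately rests on the tameness of the system, Theorem \ref{thm-tame}, together with Theorem \ref{amen}) is also at our disposal.

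I would dispatch (3) first, as it is immediate: by Theorem \ref{omega}(1) the set $\om_T(x)$ is minimal for \emph{every} $x \in X$, and a minimal set is by definition one of the sets whose union is $\tilde M$; hence $\om_T(x) \subset \tilde M$ for all $x$, in particular for $x \in X \setminus \tilde M$. For (2) the key point is the standard characterization of uniform recurrence: a point $x$ is uniformly recurrent if and only if $\overline{O_T(x)}$ is minimal, equivalently if and only if $x$ lies in some minimal set. This identifies $\tilde M$ with the set $UR(T)$ of uniformly recurrent points; Theorem \ref{UR} then gives $UR(T)=\overline{P(T)}$, a closed set, so $\tilde M$ is closed.

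The substance lies in (1). Let $M$ be a minimal subset. By Theorem \ref{SY}, applied with the amenable group $G=\Z$, the restricted system $(T,M)$ is equicontinuous and $M$ is either finite or homeomorphic to the Cantor set (the dichotomy also follows from Naghmouchi's Theorem \ref{cantor}, since each minimal set is an $\om$-limit set). If $M$ is finite we are done. If $M$ is infinite, hence a minimal equicontinuous $\Z$-system on a Cantor set, I would invoke the classical structure theory of equicontinuous minimal systems: such a system is conjugate to a minimal rotation $R_a$ on a compact monothetic abelian group $K$, realized for instance as its enveloping group. Because the phase space is totally disconnected, so is $K$, whence $K$ is procyclic, $K=\varprojlim \Z/n_k\Z$, and rotation by a topological generator is exactly an adding machine; thus $M$ is an adding machine.

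I expect the main obstacle to be precisely this last identification in (1). The equicontinuity and the finite/Cantor dichotomy are handed to us by Theorem \ref{SY}; what remains is the purely dynamical fact that an infinite minimal equicontinuous $\Z$-action on a zero-dimensional, perfect, compact metric space is an adding machine. I would settle it by realizing the system as a rotation on its enveloping group and using that a compact, monothetic, totally disconnected group is an inverse limit of finite cyclic groups.
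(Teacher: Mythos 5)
Your proposal is correct, and parts (2) and (3) coincide with the paper's proof, which simply cites Theorems \ref{UR} and \ref{omega} (your explicit identification of $\tilde M$ with the set of uniformly recurrent points is exactly the step left implicit there). For part (1), however, you take a genuinely different route. The paper does not invoke the Shi--Ye result: it derives equicontinuity internally by combining Theorem \ref{thm-tame} (the system $(T,X)$, hence $M$, is tame) with Theorem \ref{amen} (a tame minimal system of an amenable group is almost automorphic) and with Naghmouchi's Theorem \ref{distal} ($T\rest M$ is distal, since $M \subset R(T)$); an almost automorphic system that is distal is equicontinuous, because the almost one-to-one map onto the maximal equicontinuous factor is a proximal extension, which must be an isomorphism when the source is distal. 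Theorem \ref{cantor} then supplies the Cantor alternative, just as in your fallback remark. Your shortcut via Theorem \ref{SY} is logically legitimate --- $\Z$ is amenable, \ref{SY} is stated earlier in the paper, and its proof does not depend on the cascade theorem, so there is no circularity --- but it leans on a strictly stronger and much deeper external result, whereas the paper's argument is self-contained relative to its own tameness machinery plus Naghmouchi's elementary lemmas (and indeed this theorem historically motivated Shi--Ye rather than the reverse). On the other hand, you flesh out the final identification that the paper compresses into one sentence: that an infinite minimal equicontinuous cascade on a Cantor set is an adding machine, via realizing it as rotation on a compact monothetic totally disconnected (hence procyclic) group $\varprojlim \Z/n_k\Z$; that argument is correct and is a useful amplification of what the paper asserts with ``these facts put together imply.''
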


\begin{proof}
(1)\ 
By Theorem \ref{thm-tame}, the system $(T, X)$ is tame, and so is $M$.
By Theorem \ref{amen} $M$ is almost automorphic. By Theorem \ref{distal} it is distal,
and by Theorem \ref{cantor} it is a Cantor set.
These facts put together imply that $M$ is  a minimal  equicontinuous cascade on a Cantor set;
i.e. an adding machine.

(2)\  Follows from Theorem \ref{UR}.

(3)\  Follows from Theorem \ref{omega}.

\end{proof}

%

\vskip 0.7cm


\bibliographystyle{amsplain}

\begin{thebibliography}{100}



\bibitem{AAM} 

E. Abdalaoui, G. Askri, H. Marzougui, 
\textit{M\"{o}bius disjointness conjecture for local dendrite maps}, 
Nonlinearity \textbf{32} (2019), 285--300. 



\bibitem{AN} 
S.A. Adeleke, P.M. Neumann: \textit{Relations related to betweenness: their structure and automorphisms,}
Mem. Amer. Math. Soc. No. \textbf{623}, 1998. 

\bibitem{Bank13}
P. Bankston, \textit{Road systems and betweenness}, Bull. Math. Sci., \textbf{3} (2013), 389--408. 

\bibitem{Bank15}
P. Bankston, 
\textit{The antisymmetry betweenness axiom and Hausdorff continua,} Topology Proc. \textbf{45} (2015), 1--27.  

%


\bibitem{B}
B.H. Bowditch, Treelike structures arising from continua and convergence groups, 
Mem. Am. Math. Soc., no. \textbf{662}, 1999. 

\bibitem{Charatoniks} 
J.J. Charatonik, W.J. Charatonik, \textit{Dendrites}, 
Aportaciones Matematica, 
Serie Comunicaciones \textbf{22} (1998) 227--253. 



\bibitem{DM-16}
B. Duchesne and N. Monod,
{\em Group actions on Dendrites and curves},
arXiv, 2016. 


\bibitem{Dulst}  
D. van Dulst, \emph{Characterizations of Banach spaces not containing $l^1$}.
Centrum voor Wiskunde en Informatica, Amsterdam, 1989.


%


\bibitem{Gl-74}
S. Glasner, 
{\em Topological dynamics and group theory}, 
Trans. Amer. Math. Soc. {\bf 187}, (1974), 327--334.

\bibitem{Gl-76}
S. Glasner, 
{\em Proximal flows\/},
Lecture Notes in Math.\ {\bfseries 517}, Springer-Verlag, 1976.


\bibitem{Gl-str} 
E. Glasner, {\it The structure of tame minimal
dynamical systems}, Ergod. Th. and Dynam. Sys. {\bf 27} (2007),
1819--1837.

\bibitem{Gl-18}
E. Glasner, 
{\em The structure of tame minimal dynamical systems for general groups},
Invent. Math. {\bf 211}, (2018), no. 1, 213--244.


\bibitem{GM-HNS}
E. Glasner and M. Megrelishvili, \emph{Linear representations of
hereditarily non-sensitive dynamical systems}, Colloq. Math.,
\textbf{104} (2006), no. 2, 223--283.

\bibitem{GM-rose}
E. Glasner and M. Megrelishvili, {\it Representations of dynamical
	systems on Banach spaces not containing $l_1$},
Trans. Amer. Math. Soc.,  \textbf{364} (2012),  6395--6424.

\bibitem{GM-survey}
E. Glasner and M. Megrelishvili,
\emph{Representations of dynamical systems on Banach spaces,}
in: Recent Progress in General Topology III, 
(Eds.: K.P. Hart, J. van Mill, P. Simon),  
Springer-Verlag, Atlantis Press, 2014, 399--470. 

\bibitem{GM-tame}
E. Glasner and M. Megrelishvili, 
\emph{Eventual nonsensitivity and tame dynamical systems}, arXiv:1405.2588, 2014.  

\bibitem{GM-c}
E. Glasner and M. Megrelishvili,
\emph{Circularly ordered dynamical systems,} 
Monats. Math. \textbf{185} (2018), 415--441.

\bibitem{GM-tLN}
E. Glasner and M. Megrelishvili,
\emph{More on tame dynamical systems}, in: Lecture Notes S. vol. 2013, Ergodic Theory and Dynamical Systems in their Interactions with Arithmetics and Combinatorics, Eds.: S. Ferenczi, J. Kulaga-Przymus, M. Lemanczyk,  Springer, 2018, pp. 351--392.  

\bibitem{GMU} E. Glasner, M. Megrelishvili and V.V. Uspenskij,
\emph{On metrizable enveloping semigroups}, Israel J. of Math.
\textbf{164} (2008), 317--332.



\bibitem{Go-74}
T. N. T. Goodman,
{\em Topological sequence entropy},
Proc. Lond. Math. Soc.  {\bf 29},  (1974), 331--350.

\bibitem{H}
W. Huang, {\em Tame systems and scrambled pairs under an abelian
group action\/}, Ergod. Th. Dynam. Sys. {\bf 26} (2006),
1549--1567.



\bibitem{HWY} 
W. Huang, Z. Wang and X. Ye, \textit{Measure complexity and M\"{o}bius disjointness}, arXiv, 2017. 


\bibitem{KL}
D. Kerr and H. Li,
{\it Independence in topological and $C^*$-dynamics}, Math. Ann.
{\bf 338}  (2007), 869--926.


\bibitem{Koh}
A. K\"{o}hler, {\em Enveloping 
semigroups for flows}, Proc.
of the Royal Irish Academy, {\bf 95A} (1995), 179--191.

\bibitem{Kur} 
K. Kuratowski, \textit{Topology}, vol. 2, Academic Press, 1968. 




\bibitem{Kw} 
A. Kwiatkowska, {\it Universal minimal flows of generalized Wa\.{z}ewski dendrites,} J. of Symbolic Logic (ArXiv:1711.07869). 



\bibitem{Mal-13}
A.V. Malyutin,
{\em On groups acting on dendrons}, (Russian) 
Zap. Nauchn. Sem. S.-Peterburg. Otdel. Mat. Inst. Steklov. (POMI) 415 (2013), 
Geometriya i Topologiya. {\bf 12}, 62--74; translation in J. Math. Sci. (N.Y.) 212 (2016), no. 5, 558--565.

\bibitem{Mal-14} 
A.V. Malyutin, \textit{Pretrees and the shadow topology}, (Russian) 
Algebra i Analiz, \textbf{26} (2014), 45--118.
		English version: 
		St. Petersburg Mathematical Journal, \textbf{26:2} (2015), 225--271.


\bibitem{Me-nz}
M. Megrelishvili, {\em Fragmentability and representations of
	flows\/}, Topology Proceedings, {\bfseries 27:2} (2003), 497--544. 
See also: www.math.biu.ac.il/ ${\tilde{}}$ megereli.

\bibitem{Me-Helly}
M. Megrelishvili, \emph{A note on tameness of families having bounded variation}, 
Topology Appl. \textbf{217} (2017), 20--30. 



\bibitem{MN-16}
H. Marzougui and I. Naghmouchi,
{\em Minimal sets for group actions on dendrites},
Proc. Amer. Math. Soc. {\bf 144},  (2016), no. 10, 4413--4425.

\bibitem{MN-18}
H. Marzougui and I. Naghmouchi,
{\em Minimal sets and orbit space for group actions on local dendrites}, 
Mathematische Zeitschrift (to appear). 



\bibitem{Mill-Wattel}
J. van Mill and E. Wattel, \textit{Dendrons,} in: Topology and Order structures, edited by: H.R. Bennett, D.J. Lutzer,Part 1, Math. Centre Tracts 142, Math. Centrum, Amsterdam 1981. 

\bibitem{Nach}
L. Nachbin, \emph{Topology and order,} Van Nostrand Math. Studies, Princeton, New Jersey, 1965.

\bibitem{Na-12}
I. Naghmouchi,
{\em Dynamical properties of monotone dendrite maps},
Topology and Appl., {\bf  159} (2012), 144--149.

\bibitem{Ros0}
H.P. Rosenthal,
{\em A characterization of Banach spaces containing $\ell_1$\/},
Proc.\ Nat.\ Acad.\ Sci.\ (USA)
{\bfseries 71}, (1974), 2411--2413.

\bibitem{Seid-90}
G.T. Seidler, 
{\em The topological entropy of homeomorphisms on one-dimensional continua}, 
Proc. Amer. Math. Soc., {\bf 108} (1990), 1025--1030.

\bibitem{S-12}
E. Shi,
{\em Free subgroups of dendrite homeomorphism group},
Topology and its Appl., {\bf 159} (2012), 2662--2668.

\bibitem{SY-17}
E. Shi and X. Ye,
{\em Periodic points for amenable group actions on dendrites},
Proc. Amer. J. of Math., {\bf 145} (2017), 177--184.

\bibitem{SY-18}
E. Shi and X. Ye,
{\em Equicontinuity of minimal sets for amenable group actions on dendrites},
arXiv:1807.01667.

\bibitem{Shol}
M. Sholander, 
\textit{Trees, lattices, order, and betweenness,} Proc. AMS, 3 (1952), 369--381. 

\bibitem{Vel}
M.L.J. Van de Vel, \textit{Theory of convex structures}, North-Holland Math. Library, \textbf{50} (1993). 

\bibitem{Ward} 
L.E. Ward, 
\textit{Recent Developments in Dendritic Spaces and Related Topics}, 
in: Studies in topology, 1975. pp. 601--647. 



\bibitem{Wa-23}
T. Wa\.{z}ewski,
{\em Sur les courbes de Jordan ne renfermant aucune courbe simple ferm\'{e}e de Jordan},
Ann. Soc. Polon. Math., 2 (1923), pp. 49--170.


\end{thebibliography}

\end{document}